\documentclass[11pt, reqno]{amsart}
\usepackage{latexsym}
\usepackage{amsfonts, amsmath, amscd}
\usepackage{amssymb}

\usepackage[all]{xy}
\usepackage{color}
\usepackage{enumerate}
\usepackage[usenames,dvipsnames]{xcolor}
\usepackage{verbatim}
\usepackage{amsthm}    
\usepackage{hyperref}
 \usepackage{wrapfig}
 \usepackage{graphicx, psfrag}
   \usepackage{pstool}
 \usepackage{pinlabel}   
 \usepackage{float}   

    \usepackage{xspace}   
    \usepackage{tikz-cd}
\usepackage{tikz}
 \usepackage{MnSymbol}
 \usepackage{cite}

\usepackage[normalem]{ulem}     

 \pagestyle{plain} 
\oddsidemargin .0in
\evensidemargin .0in\marginparsep 0pt
\topmargin -0.2in
\marginparwidth 0pt
\textwidth 6.2in
\textheight 8.6in


\def\al{\alpha}

\def\ep{\varepsilon}

\def\bd{\partial}

\def\phi{\varphi}

\def\si{\sigma}

\def\w{\omega}

\def\be{\begin{equation}}
\def\ee{\end{equation}}
\def\bear{\begin{eqnarray}\begin{aligned}}
\def\eear{\end{aligned}\end{eqnarray}}
\def\best{\begin{eqnarray*}}
\def\eest{\end{eqnarray*}}


\newtheorem{theorem}{Theorem}[section]
\newtheorem{prop}[theorem]{Proposition}

\newtheorem{lemma}[theorem]{Lemma}
\newtheorem{cor}[theorem]{Corollary}

\newtheorem{defn}[theorem]{Definition}

\newtheorem{remark}[theorem]{Remark}
\newenvironment{rem}{\begin{remark}\rm}{\end{remark}}

\newtheorem{example}[theorem]{Example}
\newenvironment{ex}{\begin{example}\rm}{\end{example}}

\newtheorem{ExtLemma}[theorem]{Extension Lemma}


\def\r#1{\right#1}
\def\l#1{\left#1}
 \def\xra{\xrightarrow} 
  \def\ra{\rightarrow} 

\def\ma#1{\mathop {#1} \limits}

\def\del{\overline \partial}

\def\Bbb{\mathbb}

\def\Z{{ \Bbb Z}}
\def\R{{ \Bbb R}}

\def\Q{{ \Bbb Q}}
\def\cx{{ \Bbb C}}

\def\cal{\mathcal}

\def\A{{\cal A}}
\def\E{\cal E}
\def\F{\cal F}

\def\J{\cal J}
\def\JV{\cal{JV}}
\def\M{{\cal M}}
\def\oM{\overline{\cal M}}
\def\P{{ \cal  P}}
\def\PP{\Bbb P}

\def\SS{{\cal S}}

\def\U{{\cal U}}


\def\ov#1{\overline{#1}}
 \def\oM{\overline{M}}

\def\ind{\mathrm{index}}
\def\Aut{\mathrm{Aut}}

\def\Hom{\mathrm{Hom}}

\def\dim{\mathrm{dim}\, }

 \newcommand\Cech{\v{C}ech\xspace }
\newcommand\cHH{\check{\mathrm{H}}}  
\newcommand\sHH{{}^s{\mathrm{H}}}  
\def\H{\check{\cal {H}}}  

\title{\bf  Thin compactifications and relative fundamental classes     \vskip.2in}
\author{Eleny-Nicoleta Ionel}
\address{Stanford University,
Stanford, CA, USA.}
\email{ionel@math.stanford.edu}

\author{Thomas H. Parker}
\address{Michigan State University, East Lansing,  MI, USA. }
\email{parker@math.msu.edu}

\thanks{The research of E.I. was partially supported by the Simons Foundation Fellowship \#340899.
}

\begin{document}

\begin{abstract}
We define a notion of  relative fundamental class that applies to  moduli spaces in gauge theory and in symplectic Gromov--Witten theory. 
For universal moduli spaces    over a parameter space, the  relative fundamental class specifies an element of the   \Cech homology of  the compactification of each fiber;  it  is defined if the compactification is ``thin'' in the sense that  the boundary of the generic fiber has homological codimension at least two.
\end{abstract}
\maketitle

\vspace{5mm}

The moduli spaces that occur  in   gauge theories and  in symplectic Gromov--Witten theory are  often orbifolds that can be  compactified by adding ``boundary strata'' of lower dimension.  Often, it is straightforward to prove that  each stratum is a manifold, but   more difficult  to prove  ``collar theorems'' that describe  how these strata fit together.   The lack of  collar theorems is an impediment to applying singular homology to the compactified moduli space, and in particular to defining its fundamental homology class.   The purpose of this paper is to show that  collar theorems  are not needed to define a (relative) fundamental class   as an element of    \Cech    homology  for families of appropriately compactified manifolds.

\smallskip

 One can distinguish  two types  of homology theories.  Type I theories,  exemplified by  singular  homology, are based on finite chains and are functorial under continuous maps.  Type II theories, 
 exemplified by  Borel-Moore singular homology, are based on locally finite (possibly infinite)  chains, and are  functorial under {\em  proper} continuous maps.    We will use two  theories of the second type:  (type II) Steenrod homology $\sHH_*$ and (type II) \Cech homology $\cHH_*$.  These  have two features that  make them especially well-suited for  applications to compactified moduli spaces:

\medskip

(1)   For any closed subset $A$ of a locally compact Hausdorff space $X$, the  relative group
$ \sHH_p(X,A)$ is identified with $\sHH_p(X\setminus A)$.  As   Massey notes \cite[p. vii]{ma2}:
\begin{quote}{\small
\dots one does not need to consider the relative homology or cohomology groups of a pair $(X,A)$; the homology or cohomology groups of the complementary space $X-A$ serve that function.  In many ways these ``single space'' theories are simpler than the usual theories involving relative homology groups of pairs. The analog of the excision property becomes a tautology, and never needs to be considered.  It makes possible an intuitive and straightforward discussion of the homology and cohomology of a manifold in the top dimension, without any assumption of differentiability, triangulability, compactness, or even paracompactness!   }
\end{quote}

(2)  \Cech homology   satisfies a  ``continuity property'' (\eqref{1.Cech.ContinuityProperty} below) that allows one to define   relative fundamental classes by a limit process.

\medskip

We briefly review Steenrod and \Cech homology in Section~1.  Then, in Section~2, we apply Property (1) to manifolds $M$ that admit compactifications $\ov{M}$ whose
  ``boundary''  $\ov{M}\setminus M$ is ``thin'' in the sense that it has homological codimension at least 2.   There may be many such compactifications.  If $M$ is oriented and $d$-dimensional, every thin compactification carries a fundamental class 
  $$
 [ \ov{M}]\in \sHH_d(\ov{M};\Z)
  $$
 in Steenrod homology.  This class pushes forward under  maps $M\to Y$ that extend continuously over  $\ov{M}$, and many properties of  fundamental classes of manifolds continue to hold.

We next enlarge the setting by considering thinly compactified {\em families}.  We consider a proper  continuous map
\bear
\label{0.MP}
\xymatrix{
\ov\M \ar[d]^{\ov\pi}\\
\P
}
\eear
from a Hausdorff space to a locally path-connected Baire metric space whose generic fiber is a thin compactification in the sense of Section~2.  More precisely, as
in Definition~\ref{Def3.1}, we call \eqref{0.MP} a ``relatively thin family'' if there is a Baire second category subset $\P^*$ of $\P$  such that (i) the fiber $\ov\M_p$ over each $p\in\P^*$ is a thin compactification of a $d$-dimensional oriented manifold,  and (ii) a similar condition holds for a dense set of paths in $\P$.  Then the fiber over each $p\in\P^*$  has a fundamental class, {\em which we now regard as an element of  \Cech  homology} (see Lemma~\ref{1.3theorieslemma}).     Because $\P^*$ is dense, 
 a limiting process using Property (2) then yields a class -- now called a relative fundamental class --  in the  \Cech  homology of {\em every} fiber of $\ov{\pi}$.   This important fact, stated as Extension Lemma~\ref{extLemma}, is used repeatedly in subsequent arguments. 
We then give a precise definition of a relative fundamental class (Definition~\ref{1.defnVFC}) and  prove:
 
\vspace{3mm}
{\em  \noindent{\bf Theorem~\ref{3.existsVFC}.} 
Every thinly compactified family $\pi:\ov{\M}\to \P$ admits  a unique relative fundamental class. }
\vspace{3mm} 

\noindent The end of Section~4 explains how a relative fundamental class yields numerical invariants associated to the family.

 Section~5 describes how relatively thin families arise from Fredholm maps.  Suppose that $\pi:\M\to\P$ is a Fredholm map between Banach manifolds with index~$d$. A ``Fredholm-stratified thin compactification'' is an   extension of $\pi$  to a  proper  map  $\ov{\pi}:\ov{\M}\to \P$ such that the boundary $\SS=\ov{\M}\setminus \M$ is  stratified by Banach manifolds ${\cal S}_\alpha$ so that, for each $\alpha$, $\ov\pi$ restricts to a Fredholm map ${\cal S}_\alpha \to \P$   of index at most $d-2$ (see Definition~\ref{Def4.2}).    The Sard-Smale theorem implies that such compactifications fit into the context of Section~4:

\vspace{3mm} 
{\em  \noindent{\bf Lemma~\ref{Lemma4.3}.}
 A Fredholm-stratified thin family  is a relatively thin family.}
 \vspace{3mm}

Section~6 describes how a relative fundamental class on one thinly compactified family extends or restricts to  relative fundamental classes on related families.

 The remaining sections give examples.  In each example, we show that the relevant moduli space admits a Fredholm-stratified thin compactification.   Lemma~\ref{Lemma4.3} and Theorem~\ref{3.existsVFC} then immediately imply the existence of a  relative fundamental class.

Sections~7 and 8 apply these ideas to Donaldson theory.  Given an oriented Riemannian four-manifold $(X,g)$,  one   constructs  moduli spaces $\M_k(g)$ of  $g$-anti-self-dual $U(2)$-connections. Donaldson's polynomial invariants are defined by evaluating certain natural cohomology classes  on $\M_k(g)$  for a generic  $g$.  We show that results already present in Donaldson's work imply the existence of relative fundamental classes for the Uhlenbeck compactification $\ov{\M}_k(g)$ for {\em any} metric.

 Sections~9 and 10 give applications to Gromov--Witten theory.   Here the central  object is the moduli space of stable maps into a closed symplectic manifold $(X, \w)$, viewed as a family
\be\label{0.GW}
\M_{A,g,n}(X) \to \JV
\ee
over the space of Ruan-Tian perturbations, as described in Section~\ref{section9}.  Again, the theme is that many results  in the literature can be viewed as giving conditions under which  there exist thin compactifications of the Gromov--Witten moduli spaces 
\eqref{0.GW} over $\JV$, or  over some subset of $\JV$.   In these  situations, the results of Sections~2--6 produce  a relative fundamental class over a subset of $\JV$.  Section~10 presents two examples: the moduli space of somewhere-injective $J$-holomorphic maps, and the moduli space of  domain-fine $(J,\nu)$-holomorphic  maps.

 \medskip

 We note that John Pardon, building on the  work of McDuff and Wehrheim \cite{mw},   has constructed a virtual fundamental class on the space of stable maps for any genus and any closed symplectic manifold \cite{pardon}.   While Pardon's approach  is different  from the one presented here, both produce classes in the dual of   \Cech cohomology, and we  expect that they  are equal  whenever  both are defined.

 \medskip
 
 We thank John Morgan and John Pardon for very helpful conversations,   Mohammed Abouzaid for encouraging us to write  these ideas out in full, and Dusa McDuff for feedback on an early version of this paper.

\vspace{5mm}
\setcounter{equation}{0}
\section{Steenrod and \Cech  homologies} 
\label{section1}
\bigskip

Expositions of Steenrod homology are surprisingly hard to find in the literature.   We will use  the  type II version  of Steenrod homology that is based on ``infinite chains'', as
presented in Chapter~4 of W.~Massey's book \cite{ma2}.   We call this  simply ``Steenrod homology'' and denote it by $\sHH_*$  (Massey's notation is $H_*$ in Chapters 4-9 and $H_*^\infty$ in Chapters~10 and 11).   For background, see also \cite{ma1}, \cite{milnor},   and the introduction to \cite{ma2}.

\smallskip

 Steenrod  homology with  abelian coefficient group $G$ assigns, for each integer $p$, an abelian group $\sHH_p(X)=\sHH_p(X; G)$ to each locally compact Hausdorff space $X$, and a homomorphism $f_*:\sHH_p(X)\to \sHH_p(Y)$ to each {\em proper} continuous map.  The axioms for this homology theory  \cite[p. 86]{ma2} include:
  \begin{itemize}
\item   For each open subset $U\subseteq X$ and each $p$,  there is a  natural  ``restriction'' map 
\bear\label{defrho}
\rho_{X,U}:\sHH_p(X)\to \sHH_p(U).
\eear
\item For each closed set $\iota:A\hookrightarrow X$, there is a natural  long exact sequence 
\bear
\label{longexactsequence}
 \cdots \longrightarrow \sHH_p(A) \overset{i_*}\longrightarrow \sHH_p(X) \overset{\rho}\longrightarrow \sHH_p(X-A) \overset{\partial}\longrightarrow \sHH_{p-1}(A) \longrightarrow \cdots
\eear
\item  If $X$ is the union of disjoint open subsets $\{X_\alpha\}$, then the inclusions $\iota_\alpha:X_\alpha\to X$ induce monomorphisms in homology, and $\sHH_p(X)$ is the cartesian product
\bear
\label{cartesianproduct}
\sHH_p(X)\ =\ \prod_\alpha\   (\iota_\alpha)_*\sHH_p(X_\alpha).   
\eear
\item  For any  inverse system $\{\cdots \to Y_3\to Y_2\to Y_1\}$ of compact metric  spaces with limit $Y$, the  maps $Y\to  Y_\al$ induce a natural exact sequence \cite[Theorem~4]{milnor}
\be
\label{cech.cont} 
0\longrightarrow  {\lim}^1 \left[\sHH_{p+1}(Y_\alpha;G)\right]  \longrightarrow  \sHH_p(Y; G) \longrightarrow \varprojlim  \sHH_p(Y_\alpha;G)\longrightarrow 0.
\ee
\end{itemize}

The corresponding cohomology theory is Alexander-Spanier cohomology with compact support.  For  compact Haudorff spaces,  this is isomorphic to both Alexander-Spanier and \Cech cohomology  $\cHH^*$ \cite[p. 334]{Sp}, and there is a universal coefficient theorem  \cite[Cor. 4.18]{ma2},   
\best
 0\longrightarrow \mbox{Ext}(\cHH^{p+1}(\oM; \Z), G) \longrightarrow  \sHH_p(\oM;G) \longrightarrow \mbox{Hom}(\cHH^p(\oM,  \Z),G) \longrightarrow 0.
\eest
\medskip

 In Sections~1-4, the term ``manifold'', or ``topological manifold'' for emphasis, of dimension $d$, means a Hausdorff space in which each point has an open neighborhood homeomorphic to $\R^d$.  Any further assumptions, such as compactness, connectedness, or second countability, will be explicitly specified as needed.    Orientations can be defined as in \cite[\S 3.6]{ma2}.  One has the following facts for any oriented $d$-dimensional topological manifold and any abelian coefficient group $G$:

\begin{itemize}
\item For all $p>d$, 
\be\label{1.i}
  \sHH_p(M)=0.
 \ee
\item For each topological $d$-ball $B$ in a connected component $M_\alpha$ of $M$, 
\bear\label{1.ball}
\sHH_d(B;G)\cong G
\eear
 and 
 \bear\label{1.ii}
\rho_{BM_\alpha}: \sHH_d(M_\alpha) \to   \sHH_d(B) \qquad  \mbox{is an isomorphism.}
\eear
\item The orientation determines a fundamental class $[M]\in \sHH_d(M; \Z)$  such that for each open ball $B\subseteq M$, regarded as a manifold with the induced orientation, 
$$
\rho_{BM}[M]=[B].
$$
 More generally,  if  $N$ is an open  subset  of $M$  with the induced orientation, then 
\be\label{1.iv}
\rho_{NM}[M]=[N].
\ee
\item If $M$ has  components $\{M_\alpha\}$, the fundamental class is given under the isomorphism \eqref{cartesianproduct} by 
\be\label{1.iii}
[M]\ =\ \prod_\alpha \ [M_\alpha].
\ee
\end{itemize}
For proofs, see  \cite{ma2},   Theorems~2.13 and  3.21a,  page 112,  and  Lemma~11.6.

\medskip
 Note that \eqref{1.ball} shows  a key difference between  type~I and type~II homology theories: in a type~II homology,  a ball $B\subseteq \R^d$ has a fundamental class.  This, as well as  the existence of the restriction map \eqref{defrho}, stem from the fact that  type~II homology is constructed using chains that are dual to {\em compactly supported} cochains.   For the same reason,  type~II homology is invariant only under {\em proper} homotopies.

\bigskip

In Section~2, we work exclusively with Steenrod homology.  In Section~3, where we consider families of spaces, we pass instead to  \Cech homology, because it satisfies the following

\vspace{2mm}

\noindent{\bf Continuity Property.} For every inverse system of compact Hausdorff spaces as in \eqref{cech.cont}, the  maps $Y\to Y_\al$ induce a natural  isomorphism
\be\label{1.Cech.ContinuityProperty}
 \cHH_*(Y; G) \overset{\cong}\longrightarrow \varprojlim  \cHH_*(Y_\alpha; G)
 \ee
\cite[pages 260-261]{ES}. 
\vspace{4mm}

In general, Steenrod homology does not satisfy the continuity property  (it satisfies  \eqref{cech.cont} instead), and \Cech homology does not  satisfy the exactness axiom.
However,  for every compact Hausdorff  space $X$,  abelian group $G$,  and commutative ring $R$,  there are natural maps 
\be\label{1.3theoriesrow}
\sHH_p(X; G) \longrightarrow  \cHH_p(X;G)  \qquad \mbox{and}\qquad   \cHH_p(X;R)\longrightarrow \cHH^p(X;R)^\vee,
\ee
where  $\cHH^p(X;R)^\vee=\Hom ( \cHH^p(X;R), R)$ is the dual to \Cech cohomology  (cf. Remark 5.0.2 in \cite{pardon}).   Furthermore, when restricted to compact metric spaces and  rational coefficients, both arrows in \eqref{1.3theoriesrow}  are isomorphisms (the first  by  Milnor's uniqueness theorem \cite{milnor}), giving a theory that is both exact and continuous (cf.  \cite[p.~233] {ES}).   

\begin{lemma}\label{1.3theorieslemma}
Let $\H_*(X)$ denote one of the three possibilities:
\be\label{1.defH}
\hspace*{12mm} \H_*(X)\ =\ \begin{cases}\itemsep=3mm  
\ \cHH_*(X; \Z) \qquad  & \mbox{\rm \Cech homology, or} \\    \ \cHH^*(X; \Z)^\vee  & \mbox{\rm Dual \Cech cohomology, or }\\ \  \cHH_*(X; \Q)  & \mbox{\rm Rational \Cech  homology.}
\end{cases} 
\ee
\noindent Then there is a natural transformation $\sHH_*(X; \Z) \to \H_*(X)$ defined on the category of  compact Hausdorff spaces, and $\H_*$ satisfies the Continuity Property (i.e. \eqref{1.Cech.ContinuityProperty} holds with $\cHH_*$ replaced by $\cHH_*$).
\end{lemma}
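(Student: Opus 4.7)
The plan is to treat the natural transformation and the Continuity Property separately, and to handle each of the three possibilities for $\H_*$ in turn.

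For the natural transformation, the maps to $\cHH_*(X;\Z)$ and to $\cHH^*(X;\Z)^\vee$ are already the two arrows displayed in \eqref{1.3theoriesrow}; I would simply note that both are constructed functorially on the category of compact Hausdorff spaces (for instance via the description in Remark~5.0.3 of \cite{pardon}, or via Massey's realization of $\sHH_*$ as a limit over polyhedral resolutions of $X$). For $\cHH_*(X;\Q)$, I would take the composition
\[
\sHH_*(X;\Z)\ \longrightarrow\ \sHH_*(X;\Q)\ \longrightarrow\ \cHH_*(X;\Q),
\]
where the first map is induced by the coefficient inclusion $\Z\hookrightarrow\Q$ and the second is the rational analog of the first arrow of \eqref{1.3theoriesrow}.

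For the Continuity Property itself, the two Čech-homology cases $\H_*=\cHH_*(\cdot;\Z)$ and $\H_*=\cHH_*(\cdot;\Q)$ are immediate from the classical continuity axiom for Čech homology with values in an abelian coefficient group on inverse systems of compact Hausdorff spaces \cite[pp.~260--261]{ES}; the assertion \eqref{1.Cech.ContinuityProperty} is essentially that axiom applied verbatim.

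The substantive step, and the one carrying the main content of the lemma, is the dual Čech cohomology case. Here I would invoke the dual continuity of Čech cohomology: for an inverse system $\{Y_\al\}$ of compact Hausdorff spaces with inverse limit $Y$, the structure maps $Y\to Y_\al$ induce an isomorphism
\[
\varinjlim\ \cHH^p(Y_\al;\Z)\ \overset{\cong}{\longrightarrow}\ \cHH^p(Y;\Z),
\]
which is the cohomological version of the same Eilenberg--Steenrod continuity axiom cited above. Since the contravariant functor $\Hom(-,\Z)$ carries direct limits of abelian groups to inverse limits, applying it yields
\[
\cHH^p(Y;\Z)^\vee\ \cong\ \Hom\!\bigl(\varinjlim\,\cHH^p(Y_\al;\Z),\,\Z\bigr)\ \cong\ \varprojlim\ \cHH^p(Y_\al;\Z)^\vee,
\]
which is the required isomorphism. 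The only point needing care is naturality: one must verify that the inverse-limit isomorphism obtained in this way coincides with the map induced functorially by the structure maps $Y\to Y_\al$ in the dual theory, and this is an elementary diagram chase from the construction. Thus the main obstacle is purely the variance bookkeeping that converts the direct-limit continuity of Čech cohomology into the inverse-limit continuity of its linear dual; no additional analytic or topological input is needed beyond the Eilenberg--Steenrod axioms.
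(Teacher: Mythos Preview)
Your proposal is correct and follows essentially the same approach as the paper. The paper's proof is terse: it cites \cite{ES} for the continuity of \Cech homology and cohomology, then invokes Proposition~5.26 of \cite{rotman} for the interchange of $\Hom$ with direct limits to handle the dual \Cech cohomology case --- exactly the variance bookkeeping you identify as the main step.
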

\begin{proof}
 For any abelian group $G$,  \Cech homology satisfies \eqref{1.Cech.ContinuityProperty} while, with the same notation,   \Cech cohomology satisfies  
\be\label{1.cohomContinuity}
\cHH^p(Y, \Z) = \varinjlim_\alpha  \cHH^p(Y_\alpha; \Z)
\ee
\cite[pages 260-261]{ES}.  Hence by Proposition~5.26 in \cite{rotman},
$$
\cHH^p(Y;  \Z)^\vee  = \mbox{Hom}(\varinjlim_\alpha  \cHH^p(Y_\alpha;  \Z),  \Z)) = \varprojlim_\alpha  \mbox{Hom}(\cHH^p(Y_\alpha;  \Z),  \Z) = \varprojlim_\alpha  \cHH^p(Y_\alpha;  \Z)^\vee.
$$
\end{proof}

\medskip

Each of the possibilities in  Lemma~\ref{1.3theorieslemma}  pairs with  \Cech cohomology; there is no longer any need for Alexander-Spanier cohomology.  \Cech cohomology, of course, is different from singular  cohomology but, for  any  $G$ and any paracompact Hausdorff space $X$, there is a natural map
\be\label{1.CechSingular}
 \cHH^p(X; G) \to  H^p_{sing}(X; G)
\ee
that is an isomorphism if $X$ is a manifold, or more generally if $X$ is  locally contractible \cite[Corollaries 6.8.8 and  6.9.5]{Sp}.

\vspace{5mm}

\setcounter{equation}{0}
\section{Thin compactifications} 
\label{section2}
\bigskip

In Steenrod homology with integer coefficients, oriented open manifolds $M$ have a fundamental class, but this class is of limited use because it does not push forward under general continuous maps.  This deficiency can be rectified by considering maps that extend continuously over a compactification $\oM= M\cup S$ of $M$, and showing that  the fundamental class $[M]\in\sHH_*(M)$ extends canonically to a class $[\ov{M}]$ in $\sHH_*(\ov{M})$.   Many such compactifications are possible; making $S$ larger allows more maps to extend continuously to $\oM$, but making $S$ too large interferes with the fundamental class.  Definition~\ref{Def1.1} identifies a class of compactifications -- ``thin compactifications''  -- that is appropriate for working with fundamental classes.  These have the form
$$
\oM= M\cup S
$$
where $S$ is a space of ``homological codimension~2''.    There are no 
   assumptions about differentiability or about how $M$ and $S$ fit together, other than the requirement that  $\oM$ is a compact Hausdorff space.

\begin{defn}
\label{Def1.1}
Let $M$ be an oriented $d$-dimensional topological manifold.  A {\em thin compactification} of $M$ is a compact   Hausdorff space $\overline{M}$ containing $M$ such that the complement 
 $S=\oM\setminus M$  (the ``singular locus'') is a closed subset of codimension~2 in the sense that  
\bear\label{2.1}
\sHH_p(S)=0 \ \quad \forall\, p> d-2. 
\eear
\end{defn}

Every compact manifold is a thin compactification  of itself (with $S$ empty),  and for each  oriented  manifold of finite dimension $d\ge 2$, the 1-point compactification  is a  thin compactification.    Further examples arise  from stratified spaces of the following type (as   was communicated to us by both J.~Morgan and  J.~Pardon).

\begin{lemma}[Stratified thin compactification]
 \label{Lemma2.2}
Suppose that  an oriented  $d$-dimensional topological manifold $M$ is a subset of a compact Hausdorff space $\overline{M}$ that, as a set,  is a disjoint union
\bear\label{Def2.2eq}
 \ov{M}\ =\ M \cup \bigcup _{k\ge 2} S_k, 
\eear
 where for each $k\ge 2$, $S_k$ is a manifold of dimension at most $d-k$, and $T_k := \bigcup_{i\ge k} S_i$ is closed.  Then $\ov{M}$ is a thin compactification of $M$.
\end{lemma}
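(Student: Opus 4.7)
The plan is to show that $S = \overline{M}\setminus M = T_2$ satisfies $\sHH_p(S)=0$ for all $p>d-2$, by performing a finite downward induction on $k$ that peels off one stratum $S_k$ at a time using the long exact sequence \eqref{longexactsequence}.

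First I would observe that since a nonempty topological manifold has nonnegative dimension, the hypothesis $\dim S_k\le d-k$ forces $S_k=\emptyset$ for every $k>d$. Hence $T_{d+1}=\bigcup_{i\ge d+1}S_i=\emptyset$, providing a base case in which $\sHH_p(T_{d+1})=0$ for all $p$. This supplies the starting point for a finite induction.

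Next I would apply \eqref{longexactsequence} to the closed inclusion $T_{k+1}\hookrightarrow T_k$ (both are closed subsets of the compact Hausdorff space $\overline{M}$, hence locally compact Hausdorff). Since by construction $T_k\setminus T_{k+1}=S_k$, the relevant portion of the sequence reads
\bear
\label{planseq}
 \sHH_p(T_{k+1})\longrightarrow \sHH_p(T_k)\longrightarrow \sHH_p(S_k).
\eear
Because $S_k$ is a topological manifold of dimension at most $d-k$, the vanishing fact \eqref{1.i} (which uses only the manifold structure, not an orientation) gives $\sHH_p(S_k)=0$ for all $p>d-k$. In particular, for every $k\ge 2$ and every $p>d-2$ we have $p>d-2\ge d-k$, so $\sHH_p(S_k)=0$.

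Now I would carry out the induction: assume $\sHH_p(T_{k+1})=0$ for all $p>d-2$; then exactness of \eqref{planseq} together with the vanishing of $\sHH_p(S_k)$ above forces $\sHH_p(T_k)=0$ for all $p>d-2$. Starting from $T_{d+1}=\emptyset$ and iterating $d-1$ times yields $\sHH_p(T_2)=0$ for all $p>d-2$, which is precisely the thinness condition \eqref{2.1} for the closed set $S=T_2$. There is no serious obstacle here: the only points requiring care are verifying the hypotheses of \eqref{longexactsequence} (closedness of $T_{k+1}$ in $T_k$ and local compactness, both immediate) and noting that \eqref{1.i} does not require $S_k$ to be oriented in order to vanish above the top dimension.
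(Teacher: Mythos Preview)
Your proof is correct and follows essentially the same approach as the paper: downward induction on $k$ using the long exact sequence \eqref{longexactsequence} for the closed inclusion $T_{k+1}\subset T_k$ together with the vanishing \eqref{1.i} for the manifold $S_k$. The only cosmetic difference is that the paper carries the slightly sharper inductive statement $\sHH_p(T_k)=0$ for all $p>d-k$, whereas you induct on the weaker (but sufficient) statement $\sHH_p(T_k)=0$ for all $p>d-2$; both arrive at the required conclusion for $T_2$.
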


\begin{proof}
By induction on $k$, we will show that $\sHH_p(T_k)=0$ for all $p>d-k$, which implies that the singular set $S=T_2$ satisfies \eqref{2.1}.  The induction starts with $k=d+1$ ($T_{d+1}$ is empty) and descends.  For $p>d-(k-1)$,  we have $\dim S_{k-1} \le d-(k-1)<p+1$, so $\sHH_{p+1}(S_{k-1})=0$.  The     long exact sequence 
\best
\rightarrow \sHH_{p+1}(S_{k-1}) \rightarrow \sHH_p(T_{k-1}) \rightarrow  \sHH_q(T_k)\rightarrow 
\eest 
and the induction assumption then imply that $\sHH_p(T_{k-1})=0$, as required.
\end{proof}

In practice, singular strata are usually unions of a large number of strata $S_\alpha$.  One must form the $S_k$ of 
\eqref{Def2.2eq} as unions of the $S_\alpha$ and verify that $S_k \setminus S_{k-1}$ are manifolds.  One way of doing this is described in Lemma~\ref{LemmaA1}.

\medskip
\begin{ex}
\begin{enumerate}[(a)] \setlength\itemsep{4pt}
\item The closure $\ov{V}$ of a smooth quasi-projective variety $V\subset {\Bbb P}^N$ is a thin compactification.  
\item  For a nodal complex curve $C$,  the regular part $M=C^{reg}$ can be thinly compactified in three ways:  by its   1-point compactification, by  $C$, and by 
  its normalization $\tilde{C}$, which may be  disconnected.  

\item Define an  infinite chain of 2-spheres as follows.  For each $n=1, 2, \dots$, let $p_n$ be the point $(\frac{1}{n}, 0, 0)$ in  $\R^3$.  Let $S_n$ be the sphere with center  $q_n=\frac12(p_n+p_{n+1})$ and radius $R_n=|p_n-q_n|$ with the two points $p_n$ and $p_{n+1}$ removed.  Then $M=\bigcup S_n$ is an embedded 2-manifold  in $\R^3$, and  $\oM=M\cup S$ is a thin compactification with a  singular set $S=\bigcup p_n \cup (0,0,0)$ of  dimension zero.
  
\item In contrast, $M=\{ \frac1n\, |\, n\in \Z \} \subset \R$ is  a 0-manifold, but its compactification $M\cup \{0\}$ is not  thin. 
\end{enumerate}
\end{ex}

 \medskip
 
  We now come to the key point of these definitions: in Steenrod homology,  the   fundamental class of an oriented manifold $M$ extends to any thin compactification.

 \begin{theorem}
 \label{1.VFC}
 Let $M$ be an oriented $d$-dimensional manifold with fundamental class $[M]$.  Every  thin compactification   $\overline{M}$  of $M$ has a  fundamental class 
 \best
 \label{1.defFC}
[\oM]\in \sHH_d(\oM; \Z)
\eest
uniquely characterized by the requirement that 
\bear\label{2.2}
\rho_M([\oM])= [M], 
\eear
 where $\rho_M:\sHH_d(\oM; \Z)\to \sHH_d(M; \Z)$ is the map \eqref{defrho}.  
  \end{theorem}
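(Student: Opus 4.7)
The plan is to apply the long exact sequence \eqref{longexactsequence} from Section~1 to the closed pair $(\ov{M},S)$ and read off the result from the vanishing of the Steenrod homology of $S$ in the relevant degrees. Since $\ov{M}$ is a compact Hausdorff space, $S = \ov{M}\setminus M$ is closed, and $M = \ov{M}\setminus S$ is open, the sequence \eqref{longexactsequence} with $X=\ov{M}$ and $A=S$ reads
\begin{equation*}
\cdots \longrightarrow \sHH_d(S) \longrightarrow \sHH_d(\ov{M}) \overset{\rho_M}\longrightarrow \sHH_d(M) \overset{\partial}\longrightarrow \sHH_{d-1}(S) \longrightarrow \cdots
\end{equation*}
where the middle map is exactly the restriction map of \eqref{defrho}.

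The thinness condition \eqref{2.1} gives $\sHH_p(S) = 0$ for every $p > d-2$; in particular both $\sHH_d(S)$ and $\sHH_{d-1}(S)$ vanish. By exactness, $\rho_M : \sHH_d(\ov{M};\Z) \to \sHH_d(M;\Z)$ is therefore an isomorphism. Hence there is a unique class
\[
[\ov{M}] \in \sHH_d(\ov{M};\Z)
\]
satisfying $\rho_M([\ov{M}]) = [M]$, which is the defining property \eqref{2.2}. This unique preimage is the fundamental class.

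There is essentially no obstacle: the argument is a two-line consequence of the long exact sequence once one has the correct definition of thinness. The only mild points to check are that $M$ is indeed the complement of the closed set $S$ in $\ov{M}$ (built into Definition~\ref{Def1.1}), that $\rho_M$ in the exact sequence coincides with the restriction map \eqref{defrho} to the open submanifold $M$ (this is the naturality in the axioms), and that $[M] \in \sHH_d(M;\Z)$ is well-defined from the orientation via \eqref{1.iii}. The thinness hypothesis \eqref{2.1} is used in exactly two adjacent degrees, $d$ and $d-1$, which is precisely why ``codimension~2'' is the right threshold: codimension~1 would suffice for existence (vanishing of $\sHH_{d-1}(S)$ forces $\rho_M$ to be surjective) but not for uniqueness, and the full codimension~2 hypothesis is what pins down a single preferred class.
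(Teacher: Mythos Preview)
Your argument is correct and is exactly the paper's proof: apply the long exact sequence \eqref{longexactsequence} to the pair $(\ov{M},S)$ and use \eqref{2.1} to kill the flanking terms, so that $\rho_M$ is an isomorphism in degree $d$. The paper states the conclusion slightly more generally (isomorphism for all $\ell\ge d$, which is later quoted as \eqref{2.MoMisom}), but the content is identical.

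One small slip in your closing commentary: you have existence and uniqueness swapped. A ``codimension~1'' hypothesis ($\sHH_p(S)=0$ for $p>d-1$) kills $\sHH_d(S)$, making $\rho_M$ \emph{injective} (uniqueness), while surjectivity (existence) requires $\sHH_{d-1}(S)=0$, which is what the codimension~2 condition adds.
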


 \begin{proof}
The exact sequence  \eqref{longexactsequence} for  the closed subset $A=S$  of $\oM$, together with \eqref{2.1}, implies that  the  map
 \bear
 \label{2.MoMisom}
  \rho_M: \sHH_\ell(\oM) \overset{\cong}{\longrightarrow} \sHH_\ell(M)
 \eear
  is an  isomorphism for all $\ell\ge d$.  Taking $\ell=d$ shows that there is a unique class $[\oM]$  satisfying \eqref{2.2}. 
 \end{proof}
 
 \medskip
 
 In general, a manifold $M$ has  many  thin compactifications, each with a fundamental class related to $[M]$ by \eqref{2.2}.  If $\oM$ is one such thin compactification   with singular locus $S$, and $Z\subset M$ is a closed subset such that $Z\cup S$ has homological codimension~2, then $\oM$ is also a thin compactification of $M\setminus Z$,  and $[\ov{M\setminus Z}]=[\oM]$.   In this sense, one can ignore sets of codimension~2 in computations with fundamental classes.
  
\begin{ex}
\label{blowupexample}
 For two thin compactifications  $\ov M_1$ and $\ov M_2$  of the same  $d$-dimensional manifold $M$,  there are isomorphisms $\rho_i:\sHH_d(\ov{M}_i)\to \sHH_d(M)$, as in \eqref{2.MoMisom}, and the composition
 $$
 \rho_2^{-1}\circ\rho_1: \sHH_d(\ov{M}_1)\to \sHH_d(\ov{M}_2)
 $$
 takes  $[\ov M_1]$ to $[\ov M_2]$.  This is true even when there is no continuous map from $\ov{M}_1$ to $\ov{M}_2$. If there is a  map $f:\ov M_1\rightarrow \ov M_2$,  then $f_*[\ov M_1]=[\ov M_2]$ by the naturality of $\rho$.    In particular:
 \begin{enumerate}[(a)]
\item Let    $\pi:M_Z\to M$ be the blowup of a closed complex manifold $M$ along a  complex submanifold $Z$.  Then $M$ and $M_Z$ are two different thin compactifications of $M\setminus Z$, and   $\pi_*[M_Z]=[M]$.
\item More generally,  a rational map $X\dashedrightarrow Y$ between complex projective varieties induces an identification of $[X]$ with $[Y]$.
\item  If $\dim M\ge 2$, every thin compactification $\ov{M}$ has a map $p$ to the 1-point compactification $M^+$, and $p_*[\ov{M}]=[M^+]$.
\end{enumerate}
 \end{ex}
 
\medskip

The fundamental class of a manifold $M$ need not push forward under a general continuous map  $f:M\to X$.  However, if $f$ extends
to a continuous map $\ov{f}:\oM\to X$ from {\em some} thin compactification $\oM$ of $M$, then $\ov{f}$ is   proper, so  induces a map $\ov{f}_*$ in Steenrod homology:
\best
\label{1.triangle}
\xymatrix{
  \sHH_d(\oM)\ar[drr]^{\ov{f}_*} \ar[d]_{\rho}^\cong &&\\
   \sHH_d(M) \ar@{-->}[rr]  & & \sHH_d(X).
}
\eest
In this situation,  $[M]$  corresponds to $[\oM]$ by \eqref{2.2}, and the class $\ov{f}_*([\oM])\in \sHH_d(X)$ serves as a surrogate for $f_*[M]$.  Alternatively, one can take a \Cech  class  $\alpha\in \cHH^d(X)$ and evaluate ${\ov{f}}{\!\ }^*\alpha$ on the image of $[\oM]$ under \eqref{1.3theoriesrow}.

\bigskip

\subsection{\sc Covering maps}  The   isomorphism \eqref{2.MoMisom} implies several statements about how fundamental classes behave under  covering maps.

 \begin{lemma}
 \label{Lemma2.6}
Suppose that $\ov{f}:\oM\to \ov{N}$ is a continuous map between thinly compactified oriented manifolds.  If $f$  restricts to a  degree~$\ell$ oriented covering  $f:M \to N$, then
\bear
\label{2.5A}
 \ov{f}_*[\oM]=\ell\, [\overline{N}].
\eear
More generally,  if  $N$ has components $\{N_\alpha\}$  and $f$ restricts to a  degree $\ell_\alpha$ cover over some nonempty open ball $U_\alpha$ in each $N_\alpha$ 
 then, in the notation of \eqref{cartesianproduct} and \eqref{1.iii},  
\bear
\label{2.5B}
\ov{f}_*[\ov{M}]\ =\  \prod_\alpha \, \ell_\alpha\,  [\ov{N}_\alpha].
\eear
 Here $\ell_\alpha=0$ if $f^{-1}(U_\alpha)$ is empty.
\end{lemma}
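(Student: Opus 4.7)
The plan is to transport the identity from $\sHH_d(\ov N)$ to the simpler setting of the open manifold $N$ via the isomorphism $\rho_N$ of \eqref{2.MoMisom}, and then appeal to the standard local computation for oriented finite coverings. Since $\rho_M[\ov M]=[M]$ and $\rho_N[\ov N]=[N]$ by \eqref{2.2}, and since $\rho_N$ is an isomorphism, both \eqref{2.5A} and \eqref{2.5B} reduce to verifying the single identity
$$
\rho_N\,\ov f_*[\ov M]\ =\ \prod_\alpha \ell_\alpha\,[N_\alpha]\ \in\ \sHH_d(N).
$$

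To bring naturality of $\rho$ to bear, the central preliminary step is to prove $\ov f^{-1}(N)=M$, equivalently $\ov f(S_M)\subset S_N$ where $S_M=\ov M\setminus M$. Assume (as is implicit in the word ``compactification'') that $M$ is dense in $\ov M$. If some $x\in S_M$ had $\ov f(x)\in N$, pick a compact neighborhood $K\subset N$ of $\ov f(x)$ and an open neighborhood $U$ of $x$ in $\ov M$ with $\ov f(U)\subset K$; density of $M$ gives a point $y\in U\cap M$, and then $f(y)=\ov f(y)\in K$ shows $y\in f^{-1}(K)$. Thus $x$ lies in the closure of $f^{-1}(K)$, which is compact (as $f$, being a finite cover, is proper) hence closed in $\ov M$, forcing $x\in M$ and contradicting $x\in S_M$. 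With $\ov f^{-1}(N)=M$ in hand, naturality of $\rho$ for the proper map $\ov f$ and the open inclusion $N\hookrightarrow \ov N$ yields the commutative square $\rho_N\circ \ov f_* = f_*\circ \rho_M$, so that
$$
\rho_N\,\ov f_*[\ov M]\ =\ f_*\,\rho_M[\ov M]\ =\ f_*[M].
$$

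Finally, for each component $N_\alpha$ the restriction $f\colon f^{-1}(N_\alpha)\to N_\alpha$ is an oriented cover of degree $\ell_\alpha$. Picking a $d$-ball $B\subset N_\alpha$ evenly covered as $f^{-1}(B)=B_1\sqcup\cdots\sqcup B_{\ell_\alpha}$ with each $f|_{B_i}$ an orientation-preserving homeomorphism onto $B$, the properties \eqref{1.ball}, \eqref{1.ii}, \eqref{1.iii}, \eqref{cartesianproduct} and naturality together show that $f_*[M]$ and $\ell_\alpha[N_\alpha]$ both restrict to $\ell_\alpha[B]\in\sHH_d(B)\cong\Z$. Since $\sHH_d(N_\alpha)\to\sHH_d(B)$ is an isomorphism and the $\sHH_d(N_\alpha)$-factors separate in $\sHH_d(N)=\prod_\alpha\sHH_d(N_\alpha)$, this forces $f_*[M]=\prod_\alpha \ell_\alpha[N_\alpha]$, and combining with the previous display gives \eqref{2.5B}; \eqref{2.5A} is the special case of connected $N$. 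The only substantive difficulty is the preliminary step $\ov f(S_M)\subset S_N$: without it the naturality square fails to close up cleanly, and one must cope with a potentially complicated intersection of $S_M$ with $\ov f^{-1}(N)$.
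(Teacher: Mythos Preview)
Your proof is correct and follows essentially the same route as the paper's: transport the question to the open manifolds via the isomorphisms $\rho_M,\rho_N$ of \eqref{2.MoMisom}, use naturality of $\rho$ to obtain $\rho_N\ov f_*=f_*\rho_M$, and then verify $f_*[M]=\prod_\alpha \ell_\alpha[N_\alpha]$ by the standard local computation at an evenly covered ball. You are in fact more careful than the paper on one point: the paper simply asserts that the first square in its diagram commutes ``by the naturality of $\rho$'', whereas you explicitly prove the prerequisite $\ov f^{-1}(N)=M$ (equivalently $\ov f(S_M)\subset S_N$) and correctly flag that this uses density of $M$ in $\ov M$, a hypothesis implicit in the word ``compactification'' but not stated in Definition~\ref{Def1.1}.
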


  \begin{proof}
 First assume that  $M$ and $N$ are both connected.   
Fix an  open ball $U\subseteq N$ so that $f^{-1}(U)$ is the disjoint union of $\ell$ open balls $V_1, \dots V_\ell$.   In this situation, there is an isomorphism $\rho_U: \sHH_d(N)\to \sHH_d(U)$ as in   \eqref{1.ii},  and similar isomorphisms $\rho_i: \sHH_d(M)\to \sHH_d(V_i)$ for each $i$.   These fit into a
 commutative diagram  
$$
\xymatrix{
 \sHH_d(\ov{M})   \ar[d]_{\ov{f}_*}    \ar[r]^{\cong}_{\rho_M} &  \sHH_d(M) \ar[d]_{f_*}    \ar[rr]_{(\rho_1, \dots, \rho_\ell)} & & \bigoplus_i\sHH_d( V_i)    \ar[d]_{f_*}     \ar[r]^{\cong} & \Z\oplus \cdots \oplus \Z \ar[d]_\phi    \\
\sHH_d(\ov{N})    \ar[r]^{\cong}_{\rho_N} &  \sHH_d(N)    \ar[rr]^{\cong}_{\rho_U} &&  \sHH_d(U) \ar[r]^{\cong}   &  \Z
}
$$
where $\phi(a_1, \dots, a_\ell) = \sum a_i$, where $\rho_M$ and $\rho_N $ are isomorphisms by \eqref{2.MoMisom}, and where the  first two squares commute by the naturality of $\rho$.  Restricting the diagram to generators gives  \eqref{2.5A}.

In general, for each component $N_\alpha$ of $N$,  $f^{-1}(U_\alpha)$ is the disjoint union of components $V_{\alpha \beta}$, and \eqref{2.5A} applies to each restriction  $f_{\alpha \beta}=f|_{V_{\alpha\beta}}$, and the homologies of $\ov{M}$ and $\ov{N}$ are  cartesian products  as in  \eqref{cartesianproduct}.  This implies   \eqref{2.5B} with $\ell_\alpha=\sum_\beta \deg f_{\alpha\beta}$, and \eqref{2.5A} if all $\ell_\alpha$ are equal to $\ell$.
\end{proof}

\smallskip 
\begin{ex}  Lemma ~\ref{Lemma2.6} applies to branched covers of complex analytic varieties.
\end{ex}

\medskip

\subsection{\sc Components.}  Suppose that an oriented manifold $M$ has finitely many connected components $M_\al$, and that $\ov M$ is a thin compactification of   $M$ with singular locus $S$.  We then have:

\begin{lemma}  
\label{lemma2.7}
For each $\alpha$, $\ov M_\al= M_\al \cup S$ is a thin compactification of $M_\al$,  and 
\bear\label{Lemma2.6eq}
[\ov M] = \ma\sum_ \al \; [\ov M_\al].
\eear
\end{lemma}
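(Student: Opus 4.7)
My plan is to reduce the statement to the uniqueness part of Theorem~\ref{1.VFC}, which characterizes $[\ov M]$ as the unique class in $\sHH_d(\ov M;\Z)$ satisfying $\rho_M[\ov M]=[M]$.  The argument has two parts.

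First, I would verify that each $\ov M_\alpha := M_\alpha\cup S$ is itself a thin compactification of $M_\alpha$.  Since $M$ is a manifold, each component $M_\alpha$ is open in $M$, hence open in $\ov M$ (because $M$ itself is open in $\ov M$).  Therefore $\bigcup_{\beta\ne\alpha} M_\beta$ is open in $\ov M$, so
\[
\ov M_\alpha\ =\ \ov M\setminus\bigcup_{\beta\ne\alpha} M_\beta
\]
is closed in $\ov M$, hence compact Hausdorff.  Its singular locus is still the original $S$, so the codimension condition~\eqref{2.1} is inherited unchanged.

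For the formula~\eqref{Lemma2.6eq}, write $j_\alpha:\ov M_\alpha\hookrightarrow \ov M$ and $\iota_\alpha:M_\alpha\hookrightarrow M$ for the inclusions.  The right-hand side of~\eqref{Lemma2.6eq} should be interpreted as the finite sum $\sum_\alpha (j_\alpha)_*[\ov M_\alpha]\in\sHH_d(\ov M;\Z)$.  By uniqueness in Theorem~\ref{1.VFC}, it suffices to show that $\rho_M$ sends this sum to $[M]$.  The naturality of the long exact sequence~\eqref{longexactsequence}, applied to the map of pairs $(\ov M_\alpha,S)\hookrightarrow(\ov M,S)$, gives the commuting relation
\[
\rho_M\circ (j_\alpha)_*\ =\ (\iota_\alpha)_*\circ\rho_{M_\alpha},
\]
since the map induced on complements $\ov M_\alpha\setminus S=M_\alpha$ and $\ov M\setminus S=M$ is exactly $\iota_\alpha$.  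Applying this to $[\ov M_\alpha]$ and using $\rho_{M_\alpha}[\ov M_\alpha]=[M_\alpha]$ from~\eqref{2.2} yields
\[
\rho_M\Big(\sum_\alpha (j_\alpha)_*[\ov M_\alpha]\Big)\ =\ \sum_\alpha (\iota_\alpha)_*[M_\alpha]\ =\ [M],
\]
where the last equality is~\eqref{1.iii}, noting that for finitely many components the cartesian product in~\eqref{cartesianproduct} coincides with the finite direct sum.

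The only step that requires a little care --- and hence the main obstacle, such as it is --- is invoking the naturality of the long exact sequence in the form above; once that is in hand, the conclusion is immediate from the uniqueness statement of Theorem~\ref{1.VFC}.
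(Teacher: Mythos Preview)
Your proof is correct, but it takes a different route from the paper's.  The paper forms the \emph{disjoint union} $\bigsqcup_\alpha \ov M_\alpha$, observes that this is another thin compactification of $M$ with fundamental class $\sum_\alpha [\ov M_\alpha]$, and then applies Lemma~\ref{Lemma2.6} to the degree-$1$ map $\iota:\bigsqcup_\alpha \ov M_\alpha\to\ov M$ extending the identity on $M$, obtaining $\iota_*[\bigsqcup_\alpha \ov M_\alpha]=[\ov M]$.  You instead work entirely inside $\ov M$: you compute $\rho_M$ on the sum $\sum_\alpha (j_\alpha)_*[\ov M_\alpha]$ via naturality of the long exact sequence~\eqref{longexactsequence} and then invoke the uniqueness part of Theorem~\ref{1.VFC}.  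Your argument is somewhat more direct --- it avoids the auxiliary space and the machinery of Lemma~\ref{Lemma2.6} --- while the paper's version fits the recurring theme (cf.\ Example~\ref{blowupexample}) of comparing two thin compactifications of the same manifold via a map between them.
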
 
\begin{proof} 
The first statement holds because $\ov M_\al= M_\al \cup S$ is a closed, hence compact, subset of $\ov M$ and $S$ satisfies \eqref{2.1}.
The disjoint union $\bigsqcup \ov M_\al$ is therefore  another thin  compactification of $M$, and $[\bigsqcup \ov{M}_\al]= \sum_\al [\ov M_\al]$.
Moreover, the identity $M \rightarrow M$ extends to a continuous map $\iota: \bigsqcup \ov M_\al \rightarrow \ov M$.  Lemma~\ref{Lemma2.6} then gives $\iota_*[ \bigsqcup \ov{M}_\al]= [\ov M]$, and hence \eqref{Lemma2.6eq}. 
\end{proof} 

\bigskip
\subsection{\sc Thin Compactifications with boundary}
It is useful  to extend the notion of thin compactifications     to   manifolds $M$ with boundary $\partial M$.
 
\begin{defn}
\label{defthinwithboundary}
 A {\em thin compactification} of $(M, \partial M)$ is a compact Hausdorff pair $(\ov M, \ov{\bd M})$ containing $(M, \bd M)$ such that
\begin{enumerate}[(i)]
\item  $S =\ov M \setminus M$ is a closed subset of $\ov M$ of codimension 2,
\item $S'=\ov{\bd M} \setminus \bd M$ is a closed subset of $\ov{\bd M}$ of codimension 2, and
\item $S'\subseteq S$.  
\end{enumerate}   
\end{defn}
Note that (ii) implies  that  $\ov{\partial M}$ is a   thin compactification  of $\partial M$, while (iii) implies that  the interior  $M^0=M\setminus \partial M$  is a subset of $\oM\setminus \ov{\partial M}$  and that $\partial M = M\cap \ov{\partial M}$.
The  exact sequence \eqref{longexactsequence} of such a  pair $(\ov{\bd M}, \ov M)$ is, in part, 
\bear
\label{1.pairexactseq}
 \sHH_{d}(\ov M)  \xra{\rho}  \sHH_d(\oM \setminus \ov{\partial M}) \xra{\bd} \sHH_{d-1}(\ov {\bd M}) \xra{\iota_*} \sHH_{d-1}(\ov M).
\eear
When $M$ is oriented, there is an induced orientation on $\partial M$, and the interior  $M^0$ carries a fundamental class $[M^0]=[M\setminus \partial M] \in  H_d(M^0)$.  This is related to the fundamental class $[\bd M]$ of $\partial M$ by 
\bear
\label{1.partialmu}
\partial[M^0] = [\bd M] \in \sHH_{d-1}(\partial M),
\eear
where $\partial$ is the boundary operator in the sequence \eqref{longexactsequence} for the pair $(M, \partial M)$ (see \cite[Theorem~11.8]{ma2}, being mindful of orientations and noting the change of notation $H_p\mapsto H^\infty_p$ on page 302).

 \begin{lemma}
 \label{1.10}
A thin compactification  $(\ov M, \ov{\partial M})$ of an oriented  $d$-dimensional manifold with boundary $(M, \bd M)$ has a natural fundamental class $[\ov M]\in \sHH_d(\ov M\setminus \ov{\bd M})$ such that, for the maps in \eqref{1.pairexactseq}, 
\bear\label{pseudomanifoldhomologyEq}
a)\ \bd[\ov M]= [\ov{\bd M}]  \qquad\mbox{and}\qquad b)\ \ov\iota_*[\ov{\bd M}]=0.
\eear
Furthermore, $\rho'[\oM] = [M^0]$ under the restriction to   $M^0 \subseteq \oM\setminus \ov{\partial M}$.
\end{lemma}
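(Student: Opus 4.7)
The plan is to construct $[\ov M]$ on the interior locus $X:=\ov M\setminus \ov{\partial M}$ by thinning the singular set, and then to derive the boundary identities from naturality of \eqref{longexactsequence} under open restrictions. The space $X$ is locally compact Hausdorff and contains $M^0$ as an open submanifold with complement
\[
X\setminus M^0 \;=\; S\setminus S',
\]
as follows from $\ov M=M\sqcup S$, $\ov{\partial M}=\partial M\sqcup S'$, and $S'\subseteq S$. The long exact sequence \eqref{longexactsequence} applied to the closed inclusion $S'\subseteq S$ then shows $\sHH_p(S\setminus S')=0$ for every $p>d-2$: by hypothesis (i) of Definition~\ref{defthinwithboundary} we have $\sHH_p(S)=0$, and by hypothesis (ii) we have $\sHH_{p-1}(S')=0$. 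Thus $S\setminus S'$ is a closed subset of $X$ of homological codimension~$2$.

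Following the proof of Theorem~\ref{1.VFC} verbatim, the long exact sequence \eqref{longexactsequence} applied to the closed pair $(X,\,S\setminus S')$ yields an isomorphism
\[
\rho' : \sHH_d(\ov M\setminus \ov{\partial M}) \;\xra{\cong}\; \sHH_d(M^0).
\]
I \emph{define} $[\ov M]\in \sHH_d(\ov M\setminus \ov{\partial M})$ to be the unique preimage of $[M^0]$ under $\rho'$, which gives the ``Furthermore'' statement of the lemma by construction.

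For property~(a), the key input is naturality of the connecting map $\partial$ in \eqref{longexactsequence} under the open inclusion of pairs $(M,\partial M)\hookrightarrow (\ov M, \ov{\partial M})$: here $M$ is open in $\ov M$, the boundary $\partial M=\ov{\partial M}\setminus S'$ is open in $\ov{\partial M}$, and $M\cap \ov{\partial M}=\partial M$ (because $M\cap S=\emptyset$). This furnishes the commutative square
\[
\xymatrix{
\sHH_d(\ov M\setminus \ov{\partial M}) \ar[r]^-{\partial} \ar[d]_{\rho'}^{\cong}  & \sHH_{d-1}(\ov{\partial M}) \ar[d]_{\rho''}^{\cong} \\
\sHH_d(M^0) \ar[r]^-{\partial_0} & \sHH_{d-1}(\partial M)
}
\]
in which $\rho''$ is an isomorphism with $\rho''[\ov{\partial M}]=[\partial M]$ by Theorem~\ref{1.VFC} applied to the thin compactification $\ov{\partial M}$ of $\partial M$. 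Combining with \eqref{1.partialmu}, which states $\partial_0[M^0]=[\partial M]$, I obtain
\[
\rho''\bigl(\partial[\ov M]\bigr) \;=\; \partial_0 \rho'[\ov M] \;=\; \partial_0 [M^0] \;=\; [\partial M] \;=\; \rho''[\ov{\partial M}],
\]
so $\partial[\ov M]=[\ov{\partial M}]$ by injectivity of $\rho''$. Property~(b) is then immediate from (a) and the exactness of \eqref{1.pairexactseq}: $\ov\iota_*[\ov{\partial M}]=\ov\iota_*\,\partial[\ov M]=0$.

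The main technical point will be justifying the commutative square above --- i.e.\ the naturality of the connecting homomorphism in \eqref{longexactsequence} when one restricts the ambient pair to the open subpair $(M,\partial M)\subseteq (\ov M,\ov{\partial M})$. This is part of the standard axiomatic structure of Steenrod homology on locally compact Hausdorff spaces and should be cited from \cite{ma2} rather than reproved.
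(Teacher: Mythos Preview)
Your proof is correct and reaches the same conclusion as the paper, but the route to showing that $\rho'$ is an isomorphism differs. The paper sets up the full ladder diagram relating the long exact sequences of the pairs $(\ov M,\ov{\partial M})$ and $(M,\partial M)$, observes that the outer vertical restriction maps are isomorphisms (by conditions (i) and (ii) of Definition~\ref{defthinwithboundary}) and that the rightmost vertical map is injective, and then invokes the Five Lemma. You instead compute directly that the complement $X\setminus M^0=S\setminus S'$ has vanishing Steenrod homology above degree $d-2$, using the long exact sequence of the closed pair $(S,S')$; this immediately gives the isomorphism $\rho'$ from the sequence \eqref{longexactsequence} for $(X,S\setminus S')$, bypassing the Five Lemma entirely. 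Your approach is slightly more elementary and self-contained; the paper's has the minor advantage that the full ladder is already on the page when one turns to the naturality square needed for part~(a). For (a) and (b) the two arguments coincide: both use naturality of $\partial$ under open restriction (which the paper cites as properties~3b, 4b, 4c on p.~86 of \cite{ma2}) together with \eqref{1.partialmu}, and then exactness of \eqref{1.pairexactseq}. Your flagged ``main technical point'' is exactly what the paper cites from \cite{ma2}, so your instinct there is right.
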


\begin{proof} 
Combining \eqref{1.pairexactseq} with the similar sequence for the pair $(M, \partial M)$ gives the  diagram
\best
\label{0.1.com}
\xymatrix{
0 \ar[r] & \sHH_d(\ov M)\ar[d]^{\rho_{\ov M, M}} \ar[r]^{\rho\quad}& \sHH_d(\ov M \setminus  \ov {\bd M})\ar[r]^{\bd}\ar[d]^{\rho'}& \sHH_{d-1}(\overline{\partial M}) \ar[r]^{\bar{\iota}_*} \ar[d]^{\rho_\bd} & \sHH_{d-1}(\overline{M}) \ar[d]^{\rho} &
 \\
0 \ar[r] & \sHH_d(M)\ar[r]^{\rho\quad} & \sHH_d(M\setminus\partial M) \ar[r]^{\bd} & \sHH_{d-1}(\partial M) \ar[r]^{\iota_*} &\sHH_{d-1}(M)
}
\eest
where the rows are exact and the vertical maps are restriction maps to open subsets.   Using   properties~3b, 4b, and 4c listed on page 86 of \cite{ma2}, one sees that the 
three squares are commutative.  The first and third vertical arrows are  isomorphisms by parts (i) and (ii)  of  Definition~\ref{defthinwithboundary}, and  the exact sequence  \eqref{longexactsequence} for  the pair  $(\oM, S)$ shows that $\rho$ is an injection.  The Five Lemma then implies that $\rho'$ is an isomorphism.

We can define $[\ov M]\in \sHH_d(\ov M \setminus\ov {\bd M})$ uniquely by  the requirement that
\best
\rho'[\ov M]=[M^0].
\eest
Then  (\ref{pseudomanifoldhomologyEq}a)  follows from   \eqref{1.partialmu}  and the uniqueness of  \eqref{2.2}, while (\ref{pseudomanifoldhomologyEq}b)  follows from exactness of the top row of the diagram.
\end{proof}
 
\medskip

\begin{ex}
\begin{enumerate}[(a)] 
\item If $\ov{X}$ is a thin compactification of a manifold $X$  of dimension $d\ge 1$,  then the cone $C\ov{X}$ on $\ov{X}$ is a thin compactification of the cone on $X$ minus its vertex.
\vspace{-10mm}
\item \begin{minipage}[t]{4in}
 In the picture, $\ov{M}$ is the union of a cone on  $S^2$ and a cylinder $S^2\times [0,1]$, intersecting at one point $p$.  Then   the complement of the cone point $p$ is a manifold with boundary, and $\ov{M}$ satisfies the conditions of Definition~\ref{defthinwithboundary} with $S=S'=\{p\}$.
 \end{minipage}
 \begin{minipage}{2.3in}
\vspace*{10mm }
\begin{center}
\begin{figure}[H] 
\labellist
\small\hair 2pt
\pinlabel $p$ at 20 55  
\pinlabel $M_0$ at 10 22  
\pinlabel $M_1$ at 134 34
\pinlabel $M_1'$ at 134 81
\endlabellist 
\includegraphics[scale=.7]{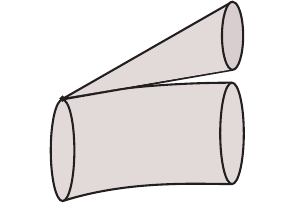}
\end{figure}
 \end{center}
\vspace*{-10mm }
\end{minipage}
 \end{enumerate}
 \end{ex}

\subsection{\sc Cobordisms.} Lemma~\ref{1.10} can be applied to cobordisms.   A   {\em thin  compactified cobordism} between $\ov{M}_0$ and $\ov{M}_1$ is a compact Hausdorff pair $(\ov W, S)$ such that 
\begin{enumerate}
\item[(i)]  $W=\ov W\setminus S$ is an oriented cobordism between two manifolds $M_0$ and $M_1$.
\item[(ii)] $\ov M_i\subset \ov W $ is a thin compactification of $M_i$ for $i=0,1$, and  $\ov M_0$ is disjoint from $\ov M_1$. 
\item[(iii)] $\sHH_{k+1}(S) = 0$ for all  $k\ge d-2$.
\end{enumerate}

 \begin{cor}
 \label{2.cobordismlemma}
Suppose that $W$ is an oriented topological cobordism between $d$-dimensional manifolds $M_0$ and $M_1$.  If $W$ 
admits a thin compactification $\overline{W}$,   then  the fundamental classes of $\oM_0$ and $\oM_1$ represent the same class in $\overline{W}$:
 \bear\label{corbordismequality}
(\iota_0)_*[\oM_0] \ =\  (\iota_1)_*[\oM_1] \qquad \mbox{ in $\sHH_d(\overline{W})$},
\eear
where $\iota_0, \iota_1$ are the inclusions of $\oM_0$ and $\oM_1$ into $\overline{W}$.
\end{cor}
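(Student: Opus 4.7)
The plan is to apply Lemma~\ref{1.10} to the pair $(\ov W,\ov M_0\sqcup\ov M_1)$, viewing $\ov W$ as a thin compactification with boundary of the $(d+1)$-dimensional oriented cobordism $W$ whose boundary is $\partial W = M_0 \sqcup M_1$.  The corollary will then follow from part (b) of \eqref{pseudomanifoldhomologyEq} combined with a decomposition of the boundary fundamental class into components.

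The first step is to verify that $(\ov W,\ov M_0\sqcup\ov M_1)$ satisfies the three conditions of Definition~\ref{defthinwithboundary}. For condition (i), the singular locus of $\ov W$ is $S$, and since $W$ has dimension $d+1$ we need $\sHH_p(S)=0$ for $p>d-1$; this is immediate from the hypothesis $\sHH_{k+1}(S)=0$ for $k\ge d-2$. For condition (ii), the boundary singular locus is the disjoint union $S_0\sqcup S_1$ where $S_i=\ov M_i\setminus M_i$ is the codimension-$2$ singular locus of the $d$-dimensional thin compactification $\ov M_i$; the cartesian product formula \eqref{cartesianproduct} then gives $\sHH_p(S_0\sqcup S_1)=0$ for $p>d-2$. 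For condition (iii), the inclusion $S_0\sqcup S_1\subseteq S$ holds because each $M_i$ is closed in $W=\ov W\setminus S$ (as a boundary component of the manifold-with-boundary $W$), so every point of $\ov M_i$ outside $M_i$ is a limit point in the compact Hausdorff space $\ov W$ that cannot lie in $W$ and hence lies in $S$.

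Granting this verification, Lemma~\ref{1.10} produces a fundamental class $[\ov W]\in\sHH_{d+1}\bigl(\ov W\setminus(\ov M_0\sqcup\ov M_1)\bigr)$ whose boundary is $[\ov{\bd W}]\in\sHH_d(\ov M_0\sqcup\ov M_1)$, and crucially asserts that $\ov\iota_*[\ov{\bd W}]=0$ in $\sHH_d(\ov W)$. Using \eqref{cartesianproduct} and \eqref{1.iii}, I would decompose $[\ov{\bd W}]=[\ov M_0]^{\bd}+[\ov M_1]^{\bd}$, where the superscript denotes the boundary orientation induced from $W$; the standard cobordism convention then gives $[\ov M_0]^{\bd}=-[\ov M_0]$ and $[\ov M_1]^{\bd}=[\ov M_1]$. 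Pushing the identity $\ov\iota_*[\ov{\bd W}]=0$ forward via the restrictions $\iota_i=\ov\iota|_{\ov M_i}$ yields $(\iota_1)_*[\ov M_1]-(\iota_0)_*[\ov M_0]=0$ in $\sHH_d(\ov W)$, which is exactly \eqref{corbordismequality}.

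I expect the most delicate step to be the verification of $S_0\sqcup S_1\subseteq S$ in condition (iii), since it requires extracting from the abstract definition of a thin compactified cobordism that the ``added points'' of each $\ov M_i$ all lie in the ambient singular locus $S$ rather than in the interior $W$. Everything else reduces to routine application of Lemma~\ref{1.10}, the component formulas already established in this section, and orientation bookkeeping of the standard kind.
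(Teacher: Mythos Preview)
Your proposal is correct and follows essentially the same route as the paper: verify that $(\ov W,\ov M_0\sqcup\ov M_1)$ satisfies Definition~\ref{defthinwithboundary}, apply Lemma~\ref{1.10}, and read off \eqref{corbordismequality} from (\ref{pseudomanifoldhomologyEq}b) after decomposing $[\ov{\bd W}]$ into its two components with the appropriate signs. The paper's proof is terser---it simply asserts that the hypotheses amount to $(\ov W,\ov{\bd W})$ being a thin compactification of $(W,\bd W)$ and then invokes Lemmas~\ref{lemma2.7} and~\ref{1.10}---whereas you spell out the verification of conditions (i)--(iii) and use \eqref{cartesianproduct} directly in place of Lemma~\ref{lemma2.7} (which is legitimate since $\ov M_0$ and $\ov M_1$ are disjoint by hypothesis).

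One small caution on your verification of (iii): your argument that a point of $\ov M_i\setminus M_i$ ``is a limit point\ldots that cannot lie in $W$'' implicitly uses that $M_i$ is dense in $\ov M_i$, which Definition~\ref{Def1.1} does not state explicitly. The paper does not address this either, so you are not missing anything relative to the original; but if you want the argument airtight you should either note that density is implicit in the word ``compactification'' or observe that the conclusion $S_i\subseteq S$ is effectively built into the intended meaning of condition~(ii) in the definition of a thin compactified cobordism.
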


 \begin{proof} The hypothesis means that $W$ is an oriented topological manifold with boundary $\bd W=M_1 \sqcup -M_0$  and that  $(\ov W, \ov{\partial W})$ is a thin compactification of $(W, \bd W)$,  where $\ov {\bd W}= \ov M_1 \cup \ov M_0$. 
 Then Lemmas~\ref{lemma2.7} and \ref{1.10} apply, and  (\ref{pseudomanifoldhomologyEq}b) becomes  \eqref{corbordismequality}.
 \end{proof}

\vspace{5mm}

\setcounter{equation}{0}

\section{Relatively thin families and the extension Lemma}
\label{section3}
\bigskip

The notion of thin compactification has a relative version. Consider a continuous map  $\pi:\M\to \P$  between Hausdorff spaces, which we regard  as a family of spaces (the fibers of $\pi$) parameterized by  $\P$.  A compactification of this family is a Hausdorff space $\ov\M$ with maps 
\bear
\label{3.1}
\xymatrix{ 
\M\ar@{^(-->}[r]\ar[d]_\pi& 
\ov\M \ar@{-->}[dl]^{\ov\pi}\\
\P
}
\eear
where the horizontal arrow is  an inclusion of $\M$ as an open subset, and $\ov\pi$ is  continuous and proper.  The fibers of $\M$ and $\ov\M$ over a point  $p\in\P$ are denoted $\M_p$ and  $\ov\M_p$ respectively; these may be empty because we are not assuming that $\ov\pi$ is surjective.  

  To extend the notion of a thin compactification to families, one might require that the fiber $\ov\M_p$ be a thin compactification of $\M_p$ for every $p\in \P$.   The aim of this section is to show that it is enough to use a weaker notion, in which the  fiber  is required to be thin only   for generic points  $p\in\P$.

 In the following definition, the term  ``second category  subset'' means a countable intersection of open dense subsets.  We will assume that $\P$ has two properties:
 \begin{enumerate} \setlength\itemsep{4pt}
\item[(a)]  $\P$ is a locally path-connected metric space, and\\[-3mm]
\item[(b)]  $\P$ is a Baire space, i.e.  every second category subset of  $\P$  is dense in $\P$.
\end{enumerate}
By the  Baire Category Theorem,   both (a) and (b) hold if $\P$ is  a metrizable separable Banach manifold.

  The space of paths in $\P$ is the set of continuous maps $\gamma: [0,1]\to \P$ with the $C^0$ topology.  For each such $\gamma$, the pullback of $\ov\M$ by  $\gamma$ is a space 
$$
\ov\M_\gamma\ =\ \big\{ (x,y)\in [0,1]\times \ov\M\, \big|\, \gamma(x)=\ov\pi(y)\big\}.
$$
 There is an associated pullback diagram
\bear
\label{3.square}
\xymatrix@=5mm{
\ov{\M}_\gamma  \ar[d]_{\ov\pi_\gamma} \ar[r]_{\hat{\gamma}} & \ov{\M} \ar[d]^{\ov\pi}   \\
[0,1]\ar[r]_\gamma & \P,
} 
\eear
with natural embeddings $\iota_0:\ov{\M}_p\to \ov{\M}_\gamma$, $\iota_1:\ov{\M}_q\to \ov{\M}_\gamma$ of the fibers over the endpoints.

\begin{defn}\label{Def3.1} 
 A {\em  relatively thin family}  of  relative dimension $d$ is  a proper continuous map 
\bear\label{D.rel.thin}
\ov\pi:\ov\M\to \P
\eear 
 from a Hausdorff space $\ov{\M}$  to a space $\P$ satisfying (a) and (b) above,  such that there exists a
  second category subset $\P^*\subseteq \P$  satisfying:
\begin{enumerate}
\item[(I)]  for each $p\in\P^*$,  the fiber $\ov\M_p$ over $p$ is a thin compactification of a $d$-dimensional oriented  topological manifold $\M_p$.\\[-2mm]
\item[(II)]  for each $p,q\in\P^*$, there is a   second category subset of paths from $p$ to $q$ such that, for each $\gamma$ in this subset,  $(\ov\M_\gamma, \; \ov\M_p \sqcup \ov \M_q)$ is a thin compactification of an oriented cobordism from $\M_p$ to $\M_q$. 
\end{enumerate}
\end{defn}

The assumptions  on $\P$ ensure that  $\P^*$ is  dense in $\P$.   Relatively thin families  often appear as compactifications:

\begin{defn}\label{Def3.1B} 
A  {\em thin compactification} of a  family $\pi:\M\to \P$ is a relatively thin family \eqref{D.rel.thin}  together with  an embedding
 as in Diagram~\eqref{3.1}.
\end{defn}

\medskip

The lemmas below use elementary topological arguments to show that  assumptions (I) and (II)  imply the existence  and uniqueness of a consistent relative fundamental class.  In subsequent sections, we will use  the Sard-Smale theorem to obtain (I) and (II).  
\smallskip

By  Lemma~\ref{1.3theorieslemma}, Assumption (I) implies that  for each $p\in\P^*$ there is an associated fundamental class 
\be
\label{3.3inCech}
[\overline\M_p] \in  \sHH_d(\overline\M_p, \Z)
\ee
in the integral Steenrod homology.   Corollary~\ref{2.cobordismlemma} and Assumption~(II) imply that this association has the consistency property  
 \bear\label{3.consistnency1}
(\iota_0)_*[\oM_p] \ =\  (\iota_1)_*[\oM_q] \qquad \mbox{ in $\sHH_d(\ov\M_\gamma)$},
\eear
along a dense of paths $\gamma$ from $p$ to $q$.

\medskip

{\em We now pass from Steenrod to \Cech homology} using the natural transformation in Lemma~\ref{1.3theorieslemma}.   The fundamental class \eqref{3.3inCech} in Steenrod homology determines a fundamental class, still denoted $[\ov\M_p]$, in \Cech homology.  Thus  there is an association
\bear\label{3.mu1}
p \mapsto [\overline\M_p] \in  \H_d(\overline\M_p)
\eear
for each   $p\in\P^*$  that satisfies the consistency property \eqref{3.consistnency1} in $\H_d(\ov\M_\gamma)$.  To proceed,  it is helpful to temporarily move to  a general context  that does not involve fundamental classes (as done in Definition~\ref{3.association} and Lemma~\ref{extLemma}).   We will return to \eqref{3.mu1} in Section~\ref{section4}.

\begin{defn}\label{3.association}
Let $\H_*$ be as in Lemma~\ref{1.3theorieslemma}.   We say an association 
$$
 p \mapsto \al_p \in  \H_*(\overline\M_p)
$$
 is {\em consistent along a path $\gamma$} from $p$ to $q$ if the images of $\al_p$ and $\al_q$ become equal in the homology of $\ov\M_{\gamma}$:
 \bear\label{3.consistent}
(\iota_0)_*\al_{p}\ =\ (\iota_1)_* \al_q \quad \mbox{in} \quad \H_*( \ov\M_{\gamma}). 
\eear 
\end{defn}

\medskip

We can now apply the continuity  property  \eqref{1.Cech.ContinuityProperty}  to extend any such consistent association  to all $p\in\P$:

\begin{ExtLemma}\label{extLemma}
\label{extensionlemma}  Let $\ov\pi:\ov\M\rightarrow \P$ be a proper continuous map from a Hausdorff space to a locally path-connected metric space $\P$. 
Suppose that there is a  dense subset $\P^*$ of $\P$ and an assignment
\bear\label{lemma3.4eq1}
p \mapsto \al_p\in \H_*(\ov{\M}_p)
\eear
defined for $p\in \P^*$   and  consistent along  paths  in a dense subset of the space of paths in $\P$ from $p$ to $q$ for   each $p,q\in\P^*$.
Then \eqref{lemma3.4eq1} extends to all $p\in\P$ so that \eqref{3.consistent} holds for all paths $\gamma$ in $\P$, and this extension is unique.
\end{ExtLemma}

 \begin{proof} Fix a point $p\in \P$, and let $B_k$ be the ball of radius $1/k$ centered at $p$. Using the definition of locally path-connected, one can inductively choose a sequence of path-connected open neighborhoods  $U_k$ of $p$ with  $U_k\subset B_k$ and $U_{k+1}\subset U_k$, for all $k\ge 1$. Then each $U_k$ contains a dense set of values $q\in\P^* \cap U_k$ for which \eqref{lemma3.4eq1}  is defined. Moreover, any two values in $\P^* \cap U_k$ can be joined by a path   in  $U_k$ which, by assumption, can be perturbed, keeping the endpoints fixed, to a path   in $U_k$ for which \eqref{3.consistent} holds.

 Choose any sequence $p_k\in U_k\cap \P^*$ (so $p_k$ converge to $p$) and paths 
 $\gamma_k:  [0,1] \to \P$ 
 from $p_k$ to $p_{k+1}$ satisfying \eqref{3.consistent} and whose image is in  $U_k$.
For  each $m\ge 1$, set $K_m=[0, \tfrac{1}{m}]$, and define a  ``segmented''  path  $\phi_m: K_m \to \P$ by $\phi_m(0)=p$ and 
$$
\phi_m(t) =\gamma_k\left(\tfrac{1}{t}-k\right)\quad \mbox{ for $t\in  \left[\frac{1}{k+1}, \ \frac{1}{k}\right]$, \ \ $k\ge m$.}
$$

Then each $\phi_m$ is a proper continuous map whose image is a path through the points $p_k=\phi_m(1/k)$ for $k\ge m$.  The pullback spaces $\ov\M_{\phi_m}$ (defined as in \eqref{3.square}) form  a nested  sequence of compact Hausdorff spaces whose intersection is the compact  space $\ov\M_p$.  There are also   natural inclusions $\iota_{km}:\ov\M_{p_k} \to \ov\M_{\phi_m}$ for each $k\ge m$.  Applying the consistency condition  \eqref{3.consistent}  inductively, one sees that the class
$$
(\iota_{km})_*\alpha_{p_k}\ \in\ \H_d(\ov\M_{\phi_m}).
$$
is independent of $k$ for $k\ge m$.  These homology classes are consistently related by the  inclusions $\ov\M_{\phi_{m_1}}\hookrightarrow \ov{\M}_{\phi_{m_2}}$ for $m_1\ge m_2$, so define an element    of the inverse  limit
\best
\label{2.VFClimit}
\varprojlim\, (\iota_{km})_*\al_{p_k} \in\varprojlim_m  \H_d(\ov{\M}_{\phi_m}).
\eest
 By the continuity property \eqref{1.Cech.ContinuityProperty}, this determines a unique   \Cech homology class
 \bear
\label{2.VFClimit2}  
\al_p \in \H_d( \ov\M_p)
\eear
which, at this point,    depends on the choices of the $p_k$ and the $\gamma_k$.

Next, fix an arbitrary continuous path $\gamma:[0,1]\to\P$ from $p\in \P$ to $p'\in\P$.  Choose  segmented
 paths  $\phi_m$ and $ \phi'_m:K_m\to\P$  as above that limit to  $p=\phi_m(0)$ and $p'=\phi'_m(0)$, respectively. Then, for each $k$, choose a path  
$\si_k$ from $p_k$ to $p_k'$ that  lies in the $1/k$ neighborhood of $\gamma$, and for which \eqref{3.consistent} holds  (specifically, $U_k\cup \gamma\cup U_k'$ is path-connected, so contains a path from $p_k\in\P^*$ to $p'_k\in\P^*$ which, by assumption, can be perturbed to the desired path $\si_k$).   For each $m$,  let $L_m\subset\R^2$ denote the ``ladder'' consisting of the union of the segments:
\medskip
\begin{center} \begin{tabular}{ll}
$I_m=\{(0,y)\ |\  0\le y\le1/m\}$ \hspace{2cm}  &   $I_0=\{(x,0)\ |\   0\le x\le1\}$ \\[2mm]
 $I'_m=\{(1,y)\ |\   0\le y\le1/m\} $  &   $J_k=\{(x, \tfrac{1}{k})\ |\  0\le x\le1\}, \ \ k\ge m.$
 \end{tabular}\end{center}
\medskip
 
\medskip

\begin{wrapfigure}[5]{r}{0.5\textwidth}
\end{wrapfigure}  
 
\noindent Now let $\Phi_m:L_m\to\P$ be the continuous map  whose restriction (i) to $I_0$ is $\gamma$ (after identifying  $I_0$ with $[0,1]$), and whose restrictions 
\vspace{2mm}
\begin{enumerate}
\item[(ii)] to $I_m$ is   $\phi_m$,  \\[-2mm]
\item[(iii)]  to $I_m'$ is   $\phi'_m$, and \\[-2mm]
\item[(iv)]  to each  $J_k$ is $\sigma_k$.  
\end{enumerate}


 \vspace{-3.2cm}
 \begin{figure}[ht!]
\labellist
\small\hair 2pt
\pinlabel $p$ at 97 65  
\pinlabel $p_1$ at   136 111
\pinlabel $p_2$ at 117 98
\pinlabel $p_3$ at 105 85
\pinlabel $p'$ at 119 52  
\pinlabel $p_1'$ at 199 85
\pinlabel $p_2'$ at 175 71
\pinlabel $p_3'$ at 158 62
\pinlabel $\Phi_m$ at 74 103
\pinlabel \mbox{Ladder $L_m$} at 23 57
\pinlabel $\gamma$ at 80  55
\endlabellist 
\hspace*{.5\textwidth} 
\includegraphics[scale=1]{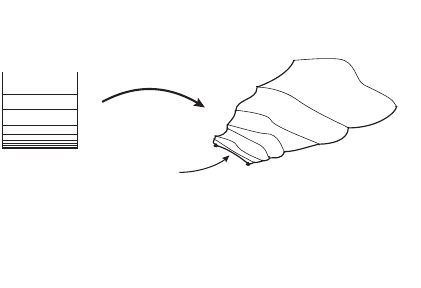}
\end{figure}

\vspace{-16mm}

Each $L_m$ is compact, so $\Phi_m$ is proper, and the pullback spaces $\ov\M_{\Phi_m}$ are   a nested sequence of compacta whose intersection is $\ov\M_\gamma$.   Again   the consistency condition  \eqref{3.consistent}  implies that,  for $k\ge m$,  the classes
\best  
(\iota_{km})_*\alpha_{p_k}, (\iota'_{km})_*\alpha_{p'_k}\ \in\ \H_d(\ov\M_{\Phi_m}).
\eest
are equal and independent of $k$, and hence form an inverse system that defines an element
\bear\label{3.ladder=}
\al_\gamma \ =\ \varprojlim\, (\iota_{km})_*\al_{p_k} \ =\ \varprojlim\, (\iota'_{km})_*\al_{p'_k} \  \in \H_d( \ov\M_\gamma). 
\eear

Recall that the class \eqref{2.VFClimit2}  depends on the choice of the points $p_k$ and the connecting paths $\gamma_k$.  But given another choice $\{p'_k, \gamma'_k\}$,  we can construct  ladder maps  $\Phi_m$ for the constant path $\gamma(t) \equiv p$.  For constant paths, $\ov\M_\gamma$ is equal to $\ov\M_p\times [0,1]$, so there 
  is a projection $\rho:\ov\M_\gamma\to \ov\M_p$.  Applying $\rho_*$ to  \eqref{3.ladder=} then  shows the class \eqref{2.VFClimit2} constructed from the two choices are equal.

With this understood,   the consistency condition \eqref{3.consistent} along $\gamma$ follows simply  by comparing \eqref{2.VFClimit2} and \eqref{3.ladder=}.

 Finally, to check uniqueness, assume $\al'$ is another extension which agrees with $\al$ on $\P^*$ and satisfies \eqref{3.consistent} for all paths $\gamma$ in $\P$. Pick any point $p\in \P$ and   segmented paths $\phi_m: K_m\to \P$ as above. Then for any $k\ge m$,  the inclusions induce equalities
\best
\al'_p \ =\ (\iota_{km})_*\al'_{p_k}\ =\ (\iota'_{km})_* \al_{p_k} \ =\  \alpha_p
\eest
in $\H_d(\ov{\M}_{\phi_m})$.  Therefore, again by continuity, we have
\best
\al'_p\ =\  \varprojlim_m \ (\iota'_{km})_*\al'_{p} \ =\  \varprojlim_m \ (\iota_{km})_*\al_{p_k} \ =\ \al_p
\eest
as elements of $\varprojlim  \H_d(\ov{\M}_{\phi_m})= \H_d( \ov\M_p)$.   Thus the extension is unique.
 \end{proof}

 \vspace{5mm}
\setcounter{equation}{0}
\section{Relative fundamental classes} 
\label{section4}
\bigskip

 We now return to the homology theory \eqref{1.defH} and define relative fundamental classes for relatively thin families. The definition is axiomatic, and we prove both existence and uniqueness. 

\begin{defn}
\label{1.defnVFC}

A {\em relative fundamental class}  for the relatively thin family \eqref{D.rel.thin}  of relative dimension~$d$ associates to each $p\in \P$ an element
\best\label{3.5[M]}
 [\ov{\M}_p]^{rel}\in \H_d(\ov{\M}_p)
\eest
such that,  for some choice of $\P^*$ as in Definition~\ref{Def3.1}, 
\begin{itemize} \setlength\itemsep{4pt}
\item[{\bf A1.}]  (Normalization) For each $p\in \P^{*}$, $[\ov{\M}_p]^{rel}$ is the fundamental class $[\ov{\M}_p]$.
\item[{\bf A2.}]  (Consistency) For every path $\gamma$ in $\P$ from $p$ to $q$, 
\bear\label{4.A2}
(\iota_0)_* [\ov{\M}_p]^{rel} = (\iota_1)_*[\ov{\M}_q]^{rel} \quad \mbox{in} \quad \H_*( \ov\M_{\gamma}).
\eear
\end{itemize}
 \end{defn}

\bigskip

Note that a relative fundamental class is not {\em a single}  class, but rather is a consistent {\em collection} of classes. It assigns a   $d$-dimensional class \eqref{3.5[M]} to  {\em every} fiber $\ov\M_p$, including those that are not thinly compactified manifolds, and those whose dimension is not $d$.  Similarly,  the consistency condition \eqref{4.A2} is a  {\em collection}  of equalities,  one for each  path in $\P$.
  The proof  of Theorem~\ref{3.existsVFC} below shows how $[\ov{\M}_p]^{rel}$ is defined at each $p$   as a limit of the fundamental classes  of the  fibers $\ov\M_p$ for $p$ in the dense set $\P^*$.
 
 Of course, the relative fundamental class depends on the  relatively thin family \eqref{D.rel.thin}, and in particular on its  relative dimension $d$. {\em A priori}, it also depends on the second category set $\P^*$, but we show next that it does not.

\bigskip

 Using the  terminology of  Definitions~\ref{Def3.1}   and \ref{1.defnVFC},  our main result  can be stated simply:

\begin{theorem}
\label{3.existsVFC}  
A relatively  thin  family  $\ov\pi:\ov\M\to \P$  admits  a unique relative fundamental class.  This class satisfies satisfies {\bf A1 }and {\bf A2} in Definition~\ref{1.defnVFC}  
for each choice of  the second category set $\P^*$ in Definition~\ref{Def3.1}, and  is independent of the choice of  $\P^*$. 
\end{theorem}

\begin{proof}  
For each $p\in\P^*$, the fiber $\ov\M_p$ is a thin compactification of an oriented $d$-manifold, and we define  $[\ov{\M}_p]^{rel}$ to be its fundamental class.
As in \eqref{3.mu1},  properties I and II of Definition~\ref{Def3.1}  imply that  the association
\bear\label{4.1.1}
p \mapsto [\ov{\M}_p]^{rel}
\eear
has the consistency property \eqref{3.consistnency1}.  Thus 
the  Extension Lemma~\ref{extLemma} applies, giving a unique extension of \eqref{4.1.1} to all $p\in\P$ that satisfies   the consistency condition Axiom~A2.

  To show independence of $\P^*\!$, suppose that  a relatively thin family  satisfies conditions I and II of Definition~\ref{Def3.1} for two second category sets ${\cal Q}^*$ and ${\cal Q}^{**}$. Then it also satisfies these conditions for the second category set $\P^{*}={{\cal Q}^*}\cap {\cal Q}^{**}$. The sets $\P^*, {\cal Q}^*$ and ${\cal Q}^{**}$ each  define a relative fundamental class, and these three classes are equal for all $p$ in  dense set $\P^{*}$. By the uniqueness in the Extension Lemma~\ref{extLemma}, they must  agree for all $p\in \P$. 
\end{proof}

\medskip

A relative fundamental class can be used to define numerical invariants.  For each $p\in\P$, there is  map
\bear\label{4.8}
I_p\,:\,  \cHH^d(\ov\M, \Z)\to \Z
\eear
defined on  a \Cech  cohomology class $\alpha\in \cHH^*(\ov{\M})$  by
\be
\label{3.Invariants}
I_p(\alpha)\ =\ 
\left \langle \alpha,\  [\ov\M_p]^{rel}\right \rangle.
\ee
Here we are implicitly restricting $\alpha$ to the fiber $\ov\M_p$, and the pairing is well defined because $\ov\M_p$ is compact.

\begin{cor}
\label{Cor4.last} 
For a relatively thin  family $\ov\pi:\ov\M\to \P$   the map \eqref{4.8}  is independent of  $p$ on each  path component  of $\P$.
\end{cor}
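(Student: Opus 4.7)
The plan is to reduce the statement to a direct application of Axiom A2 together with the naturality of the pairing between \Cech cohomology and the homology theory $\H_*$ of Lemma~\ref{1.3theorieslemma}. Fix two points $p,q\in\P$ lying in the same path component, and choose a continuous path $\gamma:[0,1]\to\P$ from $p$ to $q$. Associated to $\gamma$ is the pullback square \eqref{3.square}, in which the natural inclusions $\iota_0:\ov\M_p\hookrightarrow\ov\M_\gamma$ and $\iota_1:\ov\M_q\hookrightarrow\ov\M_\gamma$ factor through the map $\hat\gamma:\ov\M_\gamma\to\ov\M$.

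Given a \Cech cohomology class $\alpha\in\cHH^d(\ov\M,\Z)$, pull it back to $\ov\M_\gamma$ to obtain $\hat\gamma^*\alpha\in\cHH^d(\ov\M_\gamma,\Z)$. Since $\iota_0$ and $\iota_1$ realize the restrictions of $\alpha$ to the fibers $\ov\M_p$ and $\ov\M_q$, respectively, the naturality of the cap/evaluation pairing gives
\[
\langle\alpha,[\ov\M_p]^{rel}\rangle=\langle\iota_0^*\hat\gamma^*\alpha,[\ov\M_p]^{rel}\rangle=\langle\hat\gamma^*\alpha,(\iota_0)_*[\ov\M_p]^{rel}\rangle,
\]
and the analogous identity for $q$ with $\iota_0$ replaced by $\iota_1$. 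Both manipulations are valid because $\ov\M_p$ and $\ov\M_q$ are compact (as fibers of the proper map $\ov\pi$), so the pairing is well defined, and because the pairing of any of the three homology theories in \eqref{1.defH} with \Cech cohomology satisfies the usual naturality $\langle f^*\beta,x\rangle=\langle\beta,f_*x\rangle$ for proper continuous $f$.

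Now Axiom A2 of Definition~\ref{1.defnVFC} applied to the path $\gamma$ yields
\[
(\iota_0)_*[\ov\M_p]^{rel}=(\iota_1)_*[\ov\M_q]^{rel}\quad\text{in }\H_d(\ov\M_\gamma).
\]
Pairing this equality with $\hat\gamma^*\alpha$ and combining with the identities of the previous paragraph gives $I_p(\alpha)=I_q(\alpha)$. Since $\alpha$ was arbitrary, $I_p=I_q$, so $I_p$ is locally constant in $p$, and therefore constant on each path component of $\P$, as required. There is no real obstacle here beyond checking that the three ingredients---naturality of restriction of cohomology classes, naturality of the pairing under proper maps, and Axiom A2---are all available in the chosen $\H_*$, all of which have already been established in Section~1 and Definition~\ref{1.defnVFC}.
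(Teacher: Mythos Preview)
Your proof is correct and follows essentially the same approach as the paper: both use Axiom~A2 along a chosen path $\gamma$ together with naturality of the pairing between \Cech cohomology and $\H_*$. The only cosmetic difference is that the paper pushes the equality $(\iota_0)_*[\ov\M_p]^{rel}=(\iota_1)_*[\ov\M_q]^{rel}$ forward by $\hat\gamma_*$ to $\ov\M$ and pairs with $\alpha$ there, whereas you pull $\alpha$ back to $\ov\M_\gamma$ and pair there; these are dual formulations of the same argument.
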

\begin{proof}
  Given points $p$ and $q$ in the same  path component, fix a path $\gamma:[0,1]\to\P$ from $p$ to $q$.  Pushing the consistency condition \eqref{4.A2} forward by the homology map induced by the proper map $\hat\gamma$ in diagram \eqref{3.square} shows that $[\ov\M_p]^{rel}$ is homologous to $[\ov\M_q]^{rel}$ in  $\cHH_d(\ov\M)$.   Hence $I_p(\alpha)$ is equal to $I_q(\alpha)$ for all cohomology classes $\alpha$.
\end{proof}

 \vspace{5mm}
\setcounter{equation}{0}
\section{Fredholm Families}
\label{section5}
\bigskip

 In many gauge theories, the universal moduli space admits a compactification that is stratified by  Banach manifolds in the manner described in Definition~\ref{Def4.2} below.  If so, and more generally  if such a stratification exists over an open dense subset of the parameter space, one can obtain a relative fundamental class using 
the Sard-Smale theorem and  Theorem~\ref{3.existsVFC}.

 In this  and later sections, the  term ``Banach manifold'' means a metrizable separable $C^l$  Banach manifold, finite or infinite dimensional.  Such manifolds are second countable and paracompact.
We say that a property holds ``for generic $p$'' if it holds for all $p$ in some second category subset of $\P$.  We will consider Fredholm maps 
\bear
\label{4.1}
\xymatrix{
\M \ar[d]^{\pi}\\
\P
}
\eear
between  Banach manifolds, which we again regard as a family parameterized by  $\P$ and, to emphasize this viewpoint, call it a ``Fredholm family''.  Such a map has an associated Fredholm index $d$, and we assume that 
\bear\label{4.llarge}
l > \max(d+1,0).
\eear

  The Sard-Smale theorem shows that  the generic fibers of $\pi$ are manifolds of dimension~$d$.  It also yields a similar statement about generic paths in the Banach  manifolds $\Omega(p,q)$ of $C^1$ paths $\gamma:[0,1]\to\P$ from $p=\gamma(0)$ to $q=\gamma(1)$.  The precise statements are as follows.

\begin{theorem}
\label{4.SStheorem}
For a Fredholm map \eqref{4.1} of index $d$ that satisfies \eqref{4.llarge}, 
\begin{enumerate} \setlength\itemsep{4pt}
\item[(a)] The set  $\P_0^{reg}$ of regular values of $\pi$   is a second category  subset of $\P$,   and for each $p\in\P_0^{reg}$,  the fiber $\M_p=\pi^{-1}(p)$ is a manifold of dimension $d$,  and is empty if $d<0$.
\item[(b)]  For each $p,q\in\P_0^{reg}$,   there is  a second category  subset of  $\Omega(p,q)$ consisting of   paths $\gamma$ for which  the pullback space  $\M_{\gamma}$ (cf. \eqref{3.square})  is manifold of dimension $d+1$.
\end{enumerate}
\end{theorem}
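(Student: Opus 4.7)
The plan is to deduce part (a) directly from Smale's infinite-dimensional Sard theorem, and to obtain part (b) by parametric transversality applied to a Banach manifold of paths from $p$ to $q$.

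For (a), Smale's theorem asserts that for a $C^l$ Fredholm map of index $d$ between separable Banach manifolds with $l > \max(d, 0)$, the regular values form a countable intersection of open dense subsets, hence a Baire second category subset; the hypothesis $l > \max(d+1, 0)$ is strictly stronger. At each regular value $p$, the Banach implicit function theorem identifies $\M_p = \pi^{-1}(p)$ with a $C^l$ submanifold whose tangent space at $x$ is $\ker d\pi_x$. Surjectivity of $d\pi_x$ forces $\cok d\pi_x = 0$, and the index identity $\dim \ker d\pi_x - \dim \cok d\pi_x = d$ then gives $\dim \M_p = d$. When $d < 0$, no surjective $d\pi_x$ can exist at all, so regularity of $p$ forces $\M_p = \emptyset$.

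For (b), I would fix $p, q \in \P_0^{reg}$, a smooth path $\gamma_0: [0,1] \to \P$ from $p$ to $q$, and a separable Banach manifold $\mathcal{G}$ of paths from $p$ to $q$ of sufficient regularity (e.g.\ a Sobolev space $W^{k,2}$ with $k$ large) containing $\gamma_0$. The key step is to form the universal space
\[
\mathcal{U} \ =\ \{(\sigma, t, x) \in \mathcal{G} \times [0,1] \times \M \ :\ \sigma(t) = \pi(x)\}
\]
and verify it is a Banach manifold. Its defining equation is submersive at every point: at interior $t \in (0,1)$, variations $\eta$ of $\sigma$ that vanish at the endpoints can realize any vector in $T_{\pi(x)}\P$ at $t$ by a bump construction; at $t \in \{0,1\}$, where such $\eta$ vanish, surjectivity is supplied instead by $d\pi_x$, since $p, q \in \P_0^{reg}$.

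Next, I would observe that the projection $\Pi: \mathcal{U} \to \mathcal{G}$, $(\sigma, t, x) \mapsto \sigma$, is Fredholm of index $d+1$, the extra dimension coming from the parameter $t$. Applying part (a) to $\Pi$, which uses exactly the hypothesis $l > \max(d+1, 0)$, produces a second category set of regular values $\mathcal{G}^{reg}\subseteq \mathcal{G}$, and for each $\sigma \in \mathcal{G}^{reg}$ the pullback $\M_\sigma = \Pi^{-1}(\sigma)$ is a manifold of dimension $d + 1$. Since $\mathcal{G}$ embeds continuously in $C^0([0,1], \P)$ and $\mathcal{G}^{reg}$ is dense in any $\mathcal{G}$-neighborhood of $\gamma_0$, one extracts an approximating sequence $\sigma_k \in \mathcal{G}^{reg}$ with $\sigma_k \to \gamma_0$ in $C^0$. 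The main technical obstacle is choosing $\mathcal{G}$ so that all three requirements hold simultaneously: evaluation at $t$ is a smooth map of Banach manifolds, the subspace with fixed endpoints is a genuine Banach submanifold, and the $C^l$ regularity of $\pi$ transfers to $\Pi$ so that part (a) applies to it; a Sobolev space of sufficiently high order, adapted to $l$, meets all three demands.
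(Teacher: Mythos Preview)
The paper does not supply a proof of this statement: it is quoted as the Sard--Smale theorem, attributed to Smale's 1965 paper \cite{s}, and used as a black box. So there is no ``paper's own proof'' to compare against.

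Your outline is the standard one and is correct in substance. Part~(a) is exactly Smale's theorem together with the implicit function theorem. For part~(b), the parametric transversality argument you sketch---form a Banach manifold $\mathcal{G}$ of paths with fixed endpoints, build the universal space $\mathcal{U}$, check it is a manifold using variations of $\sigma$ in the interior and regularity of $p,q$ at the endpoints, then apply Sard--Smale to the index-$(d+1)$ projection $\Pi:\mathcal{U}\to\mathcal{G}$---is precisely how this is done in the literature (see, e.g., the proofs of transversality along paths in \cite{DK} or \cite{ms2}). You correctly identify that the strengthened hypothesis $l>\max(d+1,0)$ is exactly what is needed to run Sard--Smale on $\Pi$. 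The only point to be careful about, which you flag yourself, is choosing $\mathcal{G}$ so that the evaluation map is $C^l$ and $\Pi$ inherits enough regularity; a space of $C^l$ paths (or Sobolev with embedding into $C^l$) handles this. You might also note explicitly that $\M_{\sigma_k}$ is a manifold \emph{with boundary}, the boundary being $\M_p\sqcup\M_q$, which is what the paper actually uses downstream in verifying the cobordism condition of Definition~\ref{Def3.1}(II).
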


\begin{proof}
Statement (a) is the  Sard-Smale theorem; see Section~1 of  \cite{s}.  For $(b)$, set $\Omega= \Omega( p,q)$ and  let $\ep:[0,1]\times\Omega\to \P$ be the evaluation map $\ep(t,\gamma)=\gamma(t)$.   The pullback of \eqref{4.1} by $\ep$ is a  map $\ep^*\!\M\to [0,1]\times \Omega$.  Composing with the projection to $\Omega$ yields a Fredholm map  $\ep^*\!\M\to \Omega$ whose fiber over $\gamma\in \Omega$ is $\M_\gamma$.   Part (b) follows by applying part (a) to this map, as explained, for example, in Sections 4.3.1 and 4.3.2 of  \cite{DK}.
\end{proof}

 The data \eqref{4.1} also determines a real line bundle $\det(d\pi)$ over $\M$ --- the determinant line bundle of the Fredholm map $\pi$ --- whose restriction to each regular fiber $\M_p$, $p\in \P_0^{reg}$, is the orientation bundle  $\Lambda^dT\!\M_p$.  We will always assume that \eqref{4.1} has a {\em relative orientation} specified by a nowhere zero section of  $\det(d\pi)$.   We will use the term  {\em oriented Fredholm family} to mean a  Fredholm map \eqref{4.1} together with a choice of a relative orientation.
 
 \smallskip

 Given an oriented Fredholm family, we can consider compactifications as in Section~3 which are stratified by Fredholm families.  In fact, in the applications given in Sections~7-10 below, the relevant  compactifications will have the  following structure.

 \begin{defn}
\label{Def4.2} 
A {\em Fredholm-stratified thin  family} of index~$d$ is proper continuous map $\overline\pi:\overline{\M}\to\P$  
from a Hausdorff space $\ov\M$ which,  as a set, is a disjoint union
$$
 \overline\M = \M \cup \bigcup_{k=2}^\infty \SS_k
$$
such that   
\begin{enumerate}
\item[(a)] The restriction of $\ov\pi$ to $\M$ is  an index~$d$ oriented Fredholm family $\pi:\M\to\P$.
\item[(b)] For each $k\ge 2$,   the restriction of $\ov\pi$ to $\SS_k$ is an  index $d-k$ Fredholm family  $\pi_k:\SS_k\to\P$. 
\item[(c)]  ${\cal T}_k = \bigcup_{i\ge k} \SS_i$ is closed in $\ov\M$ for each $k$. 
\end{enumerate}
 
 \noindent We then  say that  $\ov\pi:\ov\M\to \P$ is a {\em  Fredholm-stratified thin compactification} of the Fredholm family $\pi$ with top stratum $\M$ and strata ${\cal S}_k$.
 \end{defn}

 \bigskip
 
   The first key observation is  that  Fredholm-stratified thin  families  fit into the context of the previous section:  the  Sard-Smale theorem implies that they are relatively thin families in the sense of  Definition~\ref{Def3.1}.

 \begin{lemma}
 \label{Lemma4.3} 
 A Fredholm-stratified thin family  is a relatively thin family  
with $\P^*$ equal to the set of regular values defined in  \eqref{5.Preg} below. 
 \end{lemma}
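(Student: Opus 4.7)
The plan is to verify conditions (I) and (II) of Definition~\ref{Def3.1} by applying the Sard-Smale theorem to each of the countably many Fredholm families $\pi$ and $\pi_k$, and then stitching the conclusions together using the Baire property of $\P$ and of the path space.  Concretely, define
\be\label{5.Preg}
\P^*\ =\ \P_0^{reg}\cap\bigcap_{k\ge 2}\P_k^{reg},
\ee
where $\P_0^{reg}$ and $\P_k^{reg}$ denote the sets of regular values of $\pi$ and $\pi_k$ respectively.  Since the index of each $\pi_k$ is $d-k\le d-2$, the hypothesis \eqref{4.llarge} holds for every stratum, so each factor in \eqref{5.Preg} is second category by Theorem~\ref{4.SStheorem}(a).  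A countable intersection of second category subsets of the Baire space $\P$ is itself second category, hence dense.

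For condition (I), fix $p\in\P^*$.  The fiber $\ov\M_p$ is compact Hausdorff as the preimage of a point under a proper map, and decomposes as the disjoint union $\M_p\cup\bigcup_{k\ge 2}(\S_k)_p$.  By Theorem~\ref{4.SStheorem}(a), $\M_p$ is an oriented $d$-manifold (via the relative orientation of $\pi$), and each $(\S_k)_p$ is a manifold of dimension $d-k$, empty when $k>d$.  Condition~(c) of Definition~\ref{Def4.2} supplies the closed sets $({\cal T}_k)_p\subset\ov\M_p$ needed to apply Lemma~\ref{Lemma2.2}, which then identifies $\ov\M_p$ as a thin compactification of $\M_p$.

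For condition (II), fix $p,q\in\P^*$ and let $\Omega$ denote the Banach manifold of $C^l$ paths $\sigma:[0,1]\to\P$ with $\sigma(0)=p$ and $\sigma(1)=q$.  Applying Theorem~\ref{4.SStheorem}(b) to each of $\pi,\pi_2,\pi_3,\dots$ yields countably many open dense subsets of $\Omega$, each consisting of paths whose pullback under the corresponding Fredholm family is a manifold of the expected dimension ($d+1$ for $\M_\sigma$ and $d-k+1$ for $(\S_k)_\sigma$).  By the Baire property of $\Omega$, their intersection $\Omega^*$ is dense, and since smooth paths are $C^0$-dense among continuous paths, $\Omega^*$ furnishes the dense set of paths required by Definition~\ref{Def3.1}(II).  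For any $\sigma\in\Omega^*$, the pullback diagram \eqref{3.square} exhibits $\M_\sigma$ as an oriented $(d+1)$-dimensional cobordism from $\M_p$ to $\M_q$, stratified by manifolds $(\S_k)_\sigma$ of dimension at most $(d+1)-k$, with the pullbacks of ${\cal T}_k$ closed.  The stratification argument of Lemma~\ref{Lemma2.2} applied to the singular locus (and to its intersection with $\ov\M_p\sqcup\ov\M_q$), combined with Lemma~\ref{1.10}, then shows $(\ov\M_\sigma,\ov\M_p\sqcup\ov\M_q)$ is a thin compactified cobordism in the sense of Section~\ref{section2}.

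The main technical hurdle is guaranteeing simultaneous regularity across all strata: Theorem~\ref{4.SStheorem}(b) is phrased as a $C^0$-approximation statement for a single Fredholm family, so some care is needed to realize the transversality conditions as honestly open dense subsets of the Banach space $\Omega$ before invoking Baire.  A useful simplification is that both $(\S_k)_p$ and $(\S_k)_\sigma$ are automatically empty once the index $d-k$ (respectively $d-k+1$) becomes negative, so only finitely many strata contribute nontrivially to the codimension estimates extracted from Lemma~\ref{Lemma2.2}.
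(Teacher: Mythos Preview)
Your proof is correct and follows essentially the same approach as the paper: intersect the Sard--Smale regular values of all strata to obtain $\P^*$, invoke Lemma~\ref{Lemma2.2} on fibers for condition~(I), and do the analogous argument over paths for condition~(II). You are in fact more careful than the paper about the path-space step: the paper simply asserts that ``the Sard--Smale theorem shows that there is a dense set in the space of paths $\gamma$ in $\P$ from $p$ to $q$ for which $\gamma$ is transverse to $\pi_k$ for all $k$,'' whereas you correctly recognize that Theorem~\ref{4.SStheorem}(b) as stated handles one Fredholm map at a time and that a Baire argument in the path manifold $\Omega$ is what makes the simultaneous transversality work.
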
 
\begin{proof}
By assumption,  $\P$ is a  Banach manifold, so is locally path-connected.  
Apply the Sard-Smale Theorem to  \eqref{4.1} and to each map $\pi_k: \SS_k\to \P$, and intersect the corresponding second category sets of regular values.  The result is a single second category subset 
\bear
\label{5.Preg}
\P^{reg}\subseteq\P
\eear
 whose points are simultaneous regular values of $\pi$ and all $\pi_k$; we call these  {\em regular values of $\ov\pi$}. 

 For  each  regular $p\in\P^{reg}$,  the fiber $\overline\M_p$ of $\ov\pi:\overline\M\to \ \P$ is   stratified as in \eqref{Def2.2eq}, so is a thin compactification of $\M_p$  by Lemma~\ref{Lemma2.2}.
Thus  Assumption~I of Definition~\ref{Def3.1} holds.

Similarly,  for any   $p, q\in\P^{reg}$, the Sard-Smale theorem shows that  there is  a   second category subset of the space of   paths $\gamma$ in $\P$  from $p$ to $q$ for which $\gamma$ is  transverse to $\pi_k$ for all $k$, and hence the pullback 
$(\SS_k)_\gamma$ of $\pi_k$ over $\gamma$  is a manifold (with boundary) of dimension $d-k+1$. Then $\ov\M_\gamma$ is the union of $\M_\gamma$ and the manifolds $(\SS_k)_\gamma$, so  Assumption~II of Definition~\ref{Def3.1} also holds.
\end{proof}
\bigskip

 The following simple lemma provides a useful way of verifying that a given family satisfies the conditions of Definition~\ref{Def4.2}. 
 
 \begin{lemma}
\label{LemmaA1} 
Consider an index $d$ oriented Fredholm family  \eqref{4.1} of $C^l$ manifolds with $l$ satisfying \eqref{4.llarge}.   Suppose 
that there exists a Hausdorff space $\overline{\M}$ containing $\M$ as an open set and an extension of $\pi$ to a proper continuous map $\ov\pi:\overline{\M}\to\P$ such that 
 \begin{enumerate}[(a)]
\item $\ov\M$ can be written as a disjoint union of sets $\{\SS_\al | \al\in\A\}$ indexed by a finite set $\A$ with  $0\in\A$ and $\SS_0=\M$.
\item Each $\SS_\al$ is a manifold, and $\pi_\al=\ov{\pi}|_{\SS_\al}$ is a $C^l$ Fredholm map $\SS_\alpha\to\P$ of index $d_\al$.

\item $d_\al \le d-2$ for all $\al \ne 0$, and  
\best
\ov \SS_\al \setminus \SS_\al \subseteq \bigcup_{\{\beta\, |\, d_\beta<d_\al\} } \SS_\beta.
\eest
\end{enumerate}
Then $\ov\pi: \ov\M \rightarrow \P$ is a  Fredholm-stratified thin compactification of the family $\pi:\M\to \P$.
\end{lemma}
\begin{proof} 
Condition (c) implies that the accumulation points of $\SS_\al$  that are not in $\SS_\alpha$ lie in strata of strictly smaller index. Hence for each $k$,  the union of strata of  index $d-k$ 
\best
X_{k} = \ma\bigcup_{d_\al=d- k} \SS_\al
\eest
is topologically a disjoint union of manifolds.   This means that  each $X_k$ is a manifold, and that the restriction of $\ov\pi$ to $X_k$ is a Fredholm map of index $d-k$.  It also means that $\bigcup_{i\ge k} X_i$ is closed for each $k$.   Definition~\ref{Def4.2} then applies, showing  that 
\best
\ov\M \ =\  \M \cup\ma \bigcup_{\al \ne 0}  \SS_\al \ =\   \M \cup\ma\bigcup_{k \ge 2} X_k
\eest
is a   Fredholm-stratified thin compactification of $\M\ra \P$ with strata $X_k$. 
  \end{proof}

\bigskip
 
We conclude this section with two finite-dimensional examples, both of which come from algebraic geometry.    The first  shows that the relative fundamental class can be different from the actual fundamental class even when the fiber is a manifold.

\begin{example}[\sc Elliptic Surfaces] \label{3.mfex}
{\rm  An elliptic surface is a  compact complex algebraic surface $S$ with a holomorphic projection $\pi:X\to C$ to an algebraic curve $C$ whose fiber is an elliptic curve except over a finite number of points $p_i\in C$.   The singular fibers $F_{p_i}$ are unions of rational curves, each possibly with singularities and multiplicities,  and elliptic curves with multiplicity.  The restriction of $\pi$ to the union of the smooth fibers is a Fredholm map $X^*\to C$ of index~2, and $\pi:X\to C$ is a thin compactification of $X^*$ regarded as a family over $C$.  Thus by Theorem~\ref{3.existsVFC}, every fiber $F_p$ carries a relative fundamental class
$$
[F_p]^{rel}\in \cHH_2(F_p, \Z)
$$
whose image $\iota_*[F_p]^{rel}$ in $\cHH_2(X, \Q)$ is the homology class  of the generic fiber.

In particular, if $F_p$ is a smooth elliptic fiber with multiplicity $m>1$, then $F_p$ has a fundamental class $[F_p]$, but the relative fundamental class is 
\bear
\label{3.multiplefiber}
[F_p]^{rel}=m[F_p].
\eear

}
\end{example}

\medskip

\begin{example}[\sc Lefschetz Pencils and Fibrations]
{\rm  
Consider   a complex  projective manifold $X$ with a complete linear system $|D|$ of divisors of  complex dimension at least~3.  Lefschetz showed that a generic 2-dimensional linear system $[D]$
determines  a holomorphic map $\pi:X\setminus B\to {\Bbb P}^1$,  where $B$ is the base locus of $[D]$.  The generic fiber of $\pi$ is smooth and the  other fibers have only quadratic singularities. This map $\pi$ is   therefore
Fredholm, and its index is the real dimension  $d= 2(\mbox{dim}_\cx X-1)$  of the generic fiber.  While $\pi$ does not extend continuously to $X$,   it does extend continuously over the blowup ${X}_B$ of $X$ along $B$, and $\tilde{\pi}:{X}_B\to{\Bbb P}^1$ is   a thin compactification of $X\setminus B\to {\Bbb P}^1$.  Theorem~\ref{3.existsVFC} therefore defines a relative fundamental class
$$
[F_p]^{rel}\in \cHH_d(F_p, \Z)
$$
on the  fiber $F_p=\tilde{\pi}^{-1}(p)$ over each $p\in{\Bbb P}^1$.
}
\end{example}

\vspace{5mm}
\setcounter{equation}{0}
\section{Enlarging  the parameter space}
\label{section6}
\bigskip

In gauge theories, one starts with a parameterized family of  elliptic PDEs, and considers the moduli space of solutions as a family over the space of parameters.   After completing in appropriate  Sobolev norms, this yields a map $\pi:\M\to \P$ to  a  separable Banach space $\P$ of parameters.  Often, there is a natural  compactification $\ov\M$ as in diagram~\eqref{3.1}.  

One can then hope to obtain a relative fundamental class by applying  Theorem~\ref{3.existsVFC}.  This  involves defining a stratification of
 ${\cal S}=\ov\M\setminus\M$, and proving
lemmas of two types:
\begin{enumerate}[(i)]
\item Formal dimension counts for all strata. \vspace{4pt}
\item Transversality results   showing that $\M$ and each stratum $\M_\alpha$ of ${\cal S}$ is a manifold of the expected dimension.
\end{enumerate}
In general, (ii) can be done only if the space of parameters $\P$ is sufficiently large. Thus it may be  necessary to enlarge the space of parameters in order to define relative fundamental classes.   Enlarged spaces of parameters may also be needed to  show independence of added  geometric structure, such as the choice of  a Riemannian metric used to define Donaldson polynomials (see  Section~7 and 8), and the choice of an almost complex structure used to define Gromov--Witten invariants  (Sections~9 and 10).

    When enlarging   the parameter space, some  care is needed because the   relative fundamental classes depend on the choice of $\P$ and of the thin compactification.  Thus enlarging the space of parameters may change the problem that one is trying to solve.   Lemma~\ref{lemma4.2} below gives a stability result   that ensures that a base   expansion yields a compatible relative fundamental class.

\begin{defn}
\label{baseexpansion}
 A {\em base expansion} of  a  relatively  thin compactification \eqref{D.rel.thin} 
is a relatively  thin  compactification of
 $\pi': \M' \ra \P'$ with a commutative diagram  of continuous maps
 \be\begin{gathered}
\label{6.1}
\xymatrix{
\ov\M\ar[d]^{\ov\pi}  \ \ar[r]^{F} \  &\ov{\M'}\ar[d]^{\ov\pi'}\\
\P \  \ar[r]^f\  &\P'\\
}
\end{gathered} \ee
where  there exist a second category subset   $\P^*$ of $\P$ that satisfies the conditions of Definition~\ref{Def3.1} for $\ov\pi$, and a similar subset $(\P')^*$ of $\P'$ for $\ov\pi'$, such that:
\begin{enumerate}
\item[(a)] $f(\P^*) \subseteq (\P')^*$.  \\[-3mm]
\item[(b)]  for each $p\in\P^*$\!,  $F$ restricts to   a degree~1 map  $\M_p\to \M_{f(p)}'$  between oriented  topological manifolds. 
\end{enumerate}
\end{defn}

 Note that these conditions imply that $\ov\pi$ and $\ov\pi'$ have the same relative dimension.
  
  \smallskip
  
 \begin{lemma} 
 \label{lemma4.2}
 For a  base expansion \eqref{6.1}, the relative fundamental classes of $\pi$ and $\pi'$ agree over $\P$, i.e.  for all $p\in \P$ we have
\bear\label{lemma4.2eq}
 (F_p)_*[\ov\M_p]^{rel} =[\ov{\M'}_{\!f(p)}]^{rel}  
\eear
in $\H_*(\ov{\M'}_{\!f(p)})$,  where $F_p:\ov\M_p\to \ov\M'_{f(p)}$ denotes the restriction of $F$.  
\end{lemma}

\begin{proof}
 For each $p$ in the set $\P^*$ of Definition~\ref{baseexpansion},  both $\M_p$ and $\M_{f(p)}$ are oriented topological manifolds.
  By  Definition~\ref{Def3.1}(I),  $\ov{\M_p}$ and $\ov{\M'}_{f(p)}$ are thin compactifications of $\M_p=\M'_{f(p)}$, respectively. Each carries a  fundamental class by  Theorem~\ref{1.VFC}, and these are equal to the corresponding relative fundamental class by Axiom $\mbox{A1}$ of Definition~\ref{1.defnVFC}.   Therefore,  for each $p\in \P^{*}$, 
 \best
 (F_p)_*[\ov{\M_p}]^{rel}\, =\,  (F_p)_*[\ov{\M_p}] \, =\,  [\ov{\M'}_{\!f(p)}] \, =\,   [\ov{\M'}_{\!f(p)}]^{rel},
 \eest  
where the middle equality holds by Lemma~\ref{Lemma2.6}  and Definition~\ref{baseexpansion}(b).   This then implies  \eqref{lemma4.2eq} for all $p\in \P$,  as follows.

 As  in the proof of the Extension Lemma~\ref{extensionlemma}, 
 pick nested open sets $V_k \subseteq \P$ with $\bigcap V_k = \{p\}$, and $V'_k\subseteq\P'$ with $\bigcap V'_k = \{f(p)\}$, and set $U_k=V_k\cap f^{-1}(V'_k)$.  Next,  choose a sequence $p_k\ra p$ with $p_k\in U_k\cap \P^*$,  and  segmented paths $\phi_m$ in $\P$ converging to $p$. Then, as in \eqref{2.VFClimit}, 
 \best
 [\ov\M_p]^{rel}\ =\  \varprojlim\   (\iota_{km})_*[\ov\M_{p_k}]^{rel},
 \eest
 and therefore by the naturality of \eqref{1.Cech.ContinuityProperty}  
  \best
 (F_p)_*[\ov\M_p]^{rel} \, =\,   (F_p)_*\varprojlim \   (\iota_{km})_*[\ov\M_{p_k}]^{rel}\, =\,   \varprojlim \ (F_p\circ \iota_{km})_* [\ov\M_{p_k}]^{rel}
 \eest 
 On the other hand, the images $F\circ \phi_m$ converge to $f(p)$ in $\P'$, and therefore 
 \best
  [\ov\M'_{f(p)}]^{rel} \, =\,    \varprojlim \    (j_{km})_*[\ov\M'_{f(p_k)}]^{rel},
   \eest
 where $j_{km}=F\circ\iota_{km}$ is the inclusion of $f(p_k)$ into $V'_m$.  Combining the last three displays give \eqref{lemma4.2eq} for all $p\in \P$. 
  
\end{proof}

\medskip

\begin{ex} 
\begin{enumerate}[(a)] \setlength\itemsep{4pt}
\item  If  both vertical arrows in \eqref{6.1} are Fredholm-stratified families, and  $p$ is a regular value of $\ov{\pi}$,  then the inclusion of $\ov{\M}_p\to \{p\}$ into $\ov{\pi}:\ov{\M}\to \P$ is a  base expansion.  Equation~\eqref{lemma4.2eq} becomes $[\ov{\M_p}] =[ \ov{\M_p}]^{rel}$, which is Axiom~A1 of Definition~\ref{1.defnVFC}.

\vspace{3mm}

\item Example~\ref{3.mfex}  shows the importance of condition  (a)  in Definition~\ref{baseexpansion}.  Let   $F_p$ be a smooth elliptic fiber in an elliptic surface with multiplicity $m>1$.  Then $F_p\to\{p\}$ is a thinly compactified family with $[F_p]^{rel}=[F_p]$, and the inclusion of $F_p\to\{p\}$ into $X\to C$ satisfies  all of the conditions of  Definition~\ref{baseexpansion} except  (a).  But, as in \eqref{3.multiplefiber},  the  relative fundamental class induced by  the extended family $X\to C$  is $m[F_p]$ rather than $[F_p]$.

\vspace{3mm}

\item Similarly, in Example~\ref{blowupexample}(a), the family  $\pi_Z: \pi^{-1}(Z)\to Z$ embeds into $\pi: M_Z\to M$.  In this case,  the dimensions of the generic fibers and the indices are different, so  this embedding is not a base expansion, and the two relative fundamental classes  lie in different dimensions. 
\end{enumerate}
 \end{ex}

\medskip

 Examples~(b)  and (c) above   are instances where the relative fundamental class $[\ov{\M}]^{rel}$ {\em depends on the choice of the parameter space $\P$.}  Thus it does not make sense to speak of ``the'' relative fundamental class of a single  fiber $\M_p$:  relative fundamental classes are, by their nature, associated with  relatively thin families over parameter spaces.

 \vspace{3mm}

\begin{ex}
For moduli  spaces of solutions to an elliptic differential equation, one obtains  base expansions by lowering the regularity of the parameters, for example,  by including a space   $\P^l$ of  $C^l$ parameters into the corresponding   $C^{l-1}$  space. Often, elliptic theory implies that, for sufficiently large  $l$,  all conditions in Definition~\ref{baseexpansion} are satisfied, and hence the relative fundamental class is unchanged in the sense of Lemma~\ref{lemma4.2}.   In particular,  for each smooth parameter $p\in \P^\infty = \bigcap \P^l$, the moduli space $\ov{\M}_p$ of solutions is canonically identified with the fibers  $\ov{\M}_p^l$ over $p$ in $\P^l$ for each large  $l$, and the relative fundamental classes $[\ov{\M}_p^l]^{rel}$ consistently induce a relative fundamental class on $\ov{\M}_p$.
\end{ex}
 \vspace{3mm}

In some applications, one has a family  $\ov\M\to \P$ which is not itself  Fredholm-stratified,  but whose restriction  to an open dense subset $\P^o$ of  $\P$ is Fredholm-stratified.  The next result, which will be used in Section~\ref{section8},  gives conditions under which this is sufficient to make $\ov\M\to\P$ a relatively thin family.

\begin{lemma}\label{corelemma}
Let $\ov\pi:\ov\M\to \P$ be a proper continuous map from a Hausdorff space to a  Banach manifold.  Suppose that there is an open, dense subset $\P^o$ of $\P$ such that 
\begin{enumerate}
\item[(i)] Every path in $\P$  is  a limit of paths in $\P^o$.

\item[(ii)] The restriction $\ov\pi^o:\ov\M^o\to \P^o$  of $\ov\pi$ over $\P^o$ is a Fredholm-stratified thin family of index~$d$.    
\end{enumerate}
 Then $\ov\pi:\ov\M\to \P$ is a relatively thin family of relative dimension~$d$ with $\P^*$ defined by \eqref{6.5.P*}, and therefore admits a unique relative fundamental class $[\ov{\M}_p]^{rel} \in \H_d(\ov\M_p)$ for all $p\in \P$.

\end{lemma}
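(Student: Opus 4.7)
The plan is to verify that $\ov\pi:\ov\M\to\P$ satisfies the hypotheses of Definition~\ref{Def3.1} and then invoke Theorem~\ref{3.existsVFC}. Since $\P$ is a metrizable separable Banach manifold it is automatically locally path-connected, metric, and Baire, so conditions (a) and (b) of Definition~\ref{Def3.1} are free, and $\ov\pi$ is proper by assumption. What remains is to identify a second category subset $\P^*\subseteq\P$ for which conditions~(I) and~(II) hold.

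The natural choice, which I expect is the set denoted by the reference equation~(6.5.P*), is $\P^*:=\P^{o,reg}$, the set of simultaneous regular values in $\P^o$ produced when Lemma~\ref{Lemma4.3} is applied to $\ov\pi^o:\ov\M^o\to\P^o$. The first check is that $\P^*$ is second category \emph{in $\P$} (not merely in $\P^o$). I would argue as follows: since $\P^o$ is open and dense in $\P$, any set that is open and dense in $\P^o$ is also open and dense in $\P$; hence $\P^*$, being a countable intersection of open dense subsets of $\P^o$, is a countable intersection of open dense subsets of $\P$.

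Condition~(I) is then immediate: for $p\in\P^*\subseteq\P^o$ one has $\ov\M_p=\ov\M^o_p$, and Lemma~\ref{Lemma4.3} already identifies this as a thin compactification of an oriented $d$-dimensional manifold. For condition~(II), I would fix $p,q\in\P^*$ and proceed in two stages. Lemma~\ref{Lemma4.3} applied to $\ov\pi^o$ produces a $C^0$-dense family of paths in $\P^o$ from $p$ to $q$ whose $\ov\pi^o$-pullback is a thin compactification of an oriented cobordism. Hypothesis~(i) then lets me approximate an arbitrary path in $\P$ from $p$ to $q$ by paths in $\P^o$, and composing the two density statements delivers the dense set of ``good'' paths required by~(II).

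The technical point I expect to need care with is the endpoint issue in the second approximation step: hypothesis~(i) only asserts that paths in $\P$ are $C^0$-limits of paths in $\P^o$, and these approximants need not have endpoints exactly $p$ and $q$. Because $\P^o$ is \emph{open} and contains $p,q$, the approximating paths' endpoints lie in $\P^o$ for large $n$, and I can splice them back to $p$ and $q$ by short paths inside $\P^o$ (using local path-connectedness of $\P^o$ inherited from $\P$) to obtain genuine $C^0$-approximations by paths from $p$ to $q$ in $\P^o$. Once (I) and (II) are in place, Theorem~\ref{3.existsVFC} applies verbatim and produces the unique relative fundamental class $[\ov\M_p]^{rel}\in\H_d(\ov\M_p)$ for every $p\in\P$.
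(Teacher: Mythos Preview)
Your proposal is correct and follows essentially the same route as the paper's own proof: the same choice $\P^*=\P^{reg}$, the same observation that open dense subsets of $\P^o$ are open dense in $\P$, and the same endpoint-splicing argument (concatenating short paths in $\P^o$ from $p$ to $p_k$ and from $q_k$ to $q$) to upgrade hypothesis~(i) to approximation by paths in $\P^o$ with the correct endpoints. There is nothing substantively different to compare.
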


\begin{proof} By Lemma~\ref{Lemma4.3}, the set
\bear\label{6.5.P*}
\P^*=\P^{reg}
\eear
of regular values of $\ov\pi^o$  is a second category subset of $\P^o$, i.e. is a countable intersection of open dense subsets.  But  open dense subsets of $\P^o$ are open and dense in $\P$  (because $\P^o$ is open and dense in $\P$), so $\P^{*}$ is also a second category subset of $\P$.

Next observe that   any path $\gamma$ in $\P$ whose endpoints $p, q$ are in $\P^*\subseteq \P^o$ is a limit of paths in $\P^o$ with the same endpoints $p, q$ as follows.  By assumption~(i), $\gamma$ is the limit of a sequence of paths $\gamma_k$  in $\P^o$ with endpoints $p_k$, $q_k$, where $p_k\ra p$ and $q_k\ra q$.  Because $p, q\in \P^o$ and $\P^o$ is open subset of a Banach manifold,   for sufficiently large $k$ we can find paths $\si_k$ in $\P^o$ from $p$ to $p_k$ converging to the constant path at $p$,  and similarly paths $\tau_k$ in $\P^o$ from $q_k$ to $q$ converging to the constant path at $q$. The concatenation of these paths is a sequence $\{\si_k \#\gamma_k \# \tau_k\}$ of paths in $\P^o$, each with  endpoints $p, q$, which limit to the path $\gamma$. 

With these observations, one sees that Definition~\ref{Def3.1} applies to $\ov\pi:\ov\M\to \P$ with this $\P^*$: 
\begin{enumerate}
\item[(i)]  Condition~I holds as in the proof of Lemma~\ref{Lemma4.3}.\\[-3mm]
\item[(ii)]  Condition~{II} holds because, again as in  the proof of Lemma~\ref{Lemma4.3},  it holds  for a dense set of paths in $\P^o$ from $p$ to $q$  described above, and this set of paths is dense in the space of paths in $\P$ from $p$  to $q$.  
\end{enumerate}
The lemma then follows by Theorem~\ref{3.existsVFC}.
 \end{proof}

\vspace{5mm}
\setcounter{equation}{0}
\section{Donaldson theory}
\label{section7}
\bigskip

Let $X$ be a  smooth, closed, oriented 4-manifold that satisfies the  Betti number condition $b^+_2(X)>1$.  Donaldson theory uses   moduli spaces  of connections to construct invariants of the smooth structure of $X$.    This section  and the next  describe how  Donaldson's polynomial invariants  fit  into the context of the previous sections.   We follow Donaldson's exposition in Sections~5.6 and 6.3 of  \cite{D2}.

\medskip

Let  $E\to X$ be a $U(2)$ vector bundle with  first Chern class $c_1=c_1(E)$ and instanton  number  $k= (c_2(E)-\frac14 c_1^2(E))[X]$.  Fix a connection $\nabla^0$ on $\Lambda^2E$.   After completing in appropriate Sobolev norms (see, for example, Section 4.2 of \cite{DK}), we obtain three separable Banach manifolds:  a space ${\cal A}={\cal A}_E(\nabla^0)$ of  connections  on $E$  that induce $\nabla^0$ on $\Lambda^2E$,    a   space    ${\cal R}$  of Riemannian metrics on $X$, and the   group ${\cal G}$ of gauge transformations of $E$  with determinant 1.  Furthermore, ${\cal G}$ acts  smoothly on ${\cal A}$,  the orbit space   ${\cal B} = {\cal A}/{\cal G}$ is  metrizable, and the subset ${\cal B}^{irred}\subset{\cal B}$ of irreducible connections is also a separable Banach manifold.

 A pair $(A, g)$  in ${\cal A}\times {\cal R}$  is called an  {\em instanton}  if its curvature $F^A$ satisfies 
 $*ad(F^A)=- ad(F^A)$,  where $*$ is the Hodge star operator on 2-forms for the metric $g$. The  universal moduli space  
 $\M_E\subset {\cal B}\times {\cal R}$ is the set of all ${\cal G}$-equivalence classes $([A], g)$ of instantons  for $A\in{\cal A}_E$.  Up to isomorphism,  $\M_E$   depends on the bundle  $E$ only though the pair $(k,c_1)$, and is independent of the connection $\nabla^0$ (see page 146 of \cite{D2}). 
  
  Now fix  $c_1$ and  consider the sequence of moduli spaces $\M_k$ associated with bundles $E$ with instanton number $k$ and this fixed $c_1$.  Projection onto the second factor is a map 
 \bear
\label{ASDmodulidiagram}
\xymatrix{
\M_k\ar[d]^{\pi}\\
{\cal R} 
}
\eear
whose restriction to  $\M_k^{irred}=\M_k\cap ({\cal B}^{irred}\times {\cal R})$ is a smooth  Fredholm map of index $2d_k$, where  $d_k$ is given in terms of the Betti numbers $b_1(X)$ and $b^+_2(X)$ of $X$ by
\be
\label{6.d_k}
d_k= 4k-\tfrac32 \big(1-b_1(X)+b^+_2(X)\big).
\ee
This Fredholm family is oriented by the choice of a homology orientation for $X$ \cite[7.1.39]{DK}. 
\medskip

Let  $\M_k(g)$ denote the fiber of $\M_k$ over a metric $g\in {\cal R}$.   We say that $c_1$ is {\it odd} if it represents a class in $H^2(X; \Z)/\mbox{Torsion}$ that is not divisible by 2.

\begin{lemma}
\label{lemma7.1}
Suppose that  $b^+_2(X)>1$ and   $c_1$ is  odd. Then there is an open dense subset ${\cal R}^o$  of ${\cal R}$ such that
 \begin{enumerate}
\item[(i)]  For each $g\in{\cal R}^o$ and each $0\le j\le k$,  the fiber  $\M_j(g)$ contains no reducible connection. \\[-3mm]
\item[(ii)]   Every path in ${\cal R}$  is the limit of paths in ${\cal R}^o$.   
\end{enumerate}
\end{lemma}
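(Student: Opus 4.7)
The plan is to realize ${\cal R}^o$ as the complement of a countable, locally finite union of ``reducibility walls'' in ${\cal R}$. First I would observe that a point of $\M_j(g)$ is reducible iff the underlying $U(2)$-bundle splits smoothly as $L_1\oplus L_2$ with $c_1(L_1)+c_1(L_2)=c_1$ and $A$ respects the splitting. The difference class $c=c_1(L_1)-c_1(L_2)\in H^2(X;\Z)/\mbox{Torsion}$ then satisfies $c\equiv c_1\pmod 2$, so by the oddness hypothesis $c$ is nonzero. The ASD condition on the traceless curvature forces the de Rham class $c$ to admit a $g$-anti-self-dual harmonic representative, which is equivalent to requiring that the $g$-self-dual projection $c^+_g\in H^+_g(X;\R)$ vanish. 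Set $W_c=\{g\in{\cal R}:c^+_g=0\}$ and let $\Delta_k$ denote the set of admissible classes arising from bundles of instanton number $0\le j\le k$.

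Second, I would show each wall $W_c$ is a smooth submanifold of codimension $b^+_2(X)\ge 2$. Viewing $g\mapsto c^+_g$ as a smooth section of the rank-$b^+_2$ Hodge bundle of $g$-self-dual harmonic forms, one verifies that its differential at any $g\in W_c$ is surjective: small metric perturbations supported where a harmonic representative of $c$ is nonzero span all directions in $H^+_g(X;\R)$ (this is the standard computation in Chapter~4 of \cite{DK} or \S5.6 of \cite{D}). Local finiteness of $\Delta_k$ follows from a Chern-Weil estimate bounding $c^2$ from below in terms of $c_1^2-4k$, together with a uniform $L^2$-bound on harmonic representatives over any compact $K\subset{\cal R}$, which restricts the integer lattice of admissible classes with $W_c\cap K\ne\emptyset$ to finitely many elements. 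Defining
\[
{\cal R}^o\ =\ {\cal R}\ \setminus\ \bigcup_{c\in\Delta_k} W_c,
\]
local finiteness gives openness of ${\cal R}^o$, while the codimension-$\ge2$ condition combined with the Baire property gives density, establishing (i).

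Finally, for (ii), a continuous path $\gamma:[0,1]\to{\cal R}$ has compact image meeting only finitely many walls $W_{c_1},\dots,W_{c_N}$, by local finiteness. Because each $W_{c_i}$ has codimension $\ge 2$ in the Banach manifold ${\cal R}$, a generic $C^0$-small perturbation of $\gamma$ avoids every $W_{c_i}$, producing an approximating sequence of paths in ${\cal R}^o$. The main obstacle is verifying the codimension of the walls is exactly $b^+_2(X)$ and not merely positive: this requires showing that metric variations realize \emph{all} directions in the $b^+_2$-dimensional fiber $H^+_g(X;\R)$. This is precisely where the hypothesis $b^+_2(X)>1$ enters, guaranteeing codimension $\ge 2$ so that paths (being $1$-dimensional) can be generically pushed off the wall system; the calculation itself is the standard Freed--Uhlenbeck-type perturbation argument and will be cited rather than reproduced.
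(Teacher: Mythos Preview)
Your proposal is correct and reconstructs precisely the wall-crossing argument from \cite[Corollary~4.3.15]{DK} and the discussion on \cite[p.~147]{D}; the paper's own proof consists entirely of citing those two references (together with the observation that oddness of $c_1$ rules out flat connections). Your sketch faithfully unpacks their content, so the approaches coincide.
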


\begin{proof}
This follows directly from the discussion on page 147 of \cite{D2} and  Corollary~4.3.15 of \cite{DK}.  Note that the assumption that $c_1$ is odd  implies that  the space ${\cal A}_E$  contains no flat connections \cite[Section 5.6]{D2}. 
\end{proof}

 \begin{lemma}
 \label{donlemma}
Under the hypotheses of Lemma~\ref{lemma7.1}, the map \eqref{ASDmodulidiagram} extends to a proper continuous  map $\ov\pi:\ov{\M}_{k}\to {\cal R}$ whose restriction over ${\cal R}^o$  is a  Fredholm-stratified thin compactification of  $\pi^o: \M_k|_{\cal R^o}\to {\cal R}^o$. 
 \end{lemma}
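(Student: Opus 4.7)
My plan is to realize $\ov{\M}_k$ as the Uhlenbeck compactification.  As a set,
$$\ov{\M}_k \;=\; \bigsqcup_{j=0}^{k}\,\M_{k-j}\times_{{\cal R}} \Sym^j(X),$$
an element being an \emph{ideal instanton} $([A],g,\mu)$ with $\mu$ an effective $0$-cycle of degree $j$ on $X$ and $([A],g)\in\M_{k-j}$, with $\ov\pi([A],g,\mu)=g$.  I would equip this set with Uhlenbeck's topology: $([A_n],g_n,\mu_n)\to([A_\infty],g_\infty,\mu_\infty)$ means $g_n\to g_\infty$ in ${\cal R}$, some gauge-equivalent subsequence of $A_n$ converges weakly in $L^2_1$ on compact subsets of $X\setminus\mathrm{supp}(\mu_\infty)$ to $A_\infty$, and the curvature densities $|F_{A_n}|^2\,dv_{g_n}$ converge as Radon measures to $|F_{A_\infty}|^2\,dv_{g_\infty}+8\pi^2\mu_\infty$.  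Hausdorffness of this topology and sequential compactness of $\ov\pi^{-1}$ of compacta in ${\cal R}$ are standard consequences of Uhlenbeck's compactness theorem (see \cite[\S 4.4]{DK}); from these $\ov\pi:\ov{\M}_k\to{\cal R}$ is continuous and proper.

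To verify Definition~\ref{Def4.2} over ${\cal R}^o$, Lemma~\ref{lemma7.1}(i) guarantees that every $\M_j$ with $0\le j\le k$ contains no reducibles over ${\cal R}^o$, so $\pi_j:\M_j|_{{\cal R}^o}\to{\cal R}^o$ is a smooth oriented Fredholm map of index $2d_j=2d_k-8(k-j)$ (the relative orientation coming from the homology orientation of $X$ as in \cite[7.1.39]{DK}).  Stratify $\Sym^j(X)$ further by partition type: for each $\lambda=(\lambda_1\ge\cdots\ge\lambda_{\ell(\lambda)})\vdash j$, let $\Sym^j_\lambda(X)$ be the locally closed stratum of cycles with multiplicity profile $\lambda$, a $4\ell(\lambda)$-dimensional manifold which inherits an orientation from $X$ because $\dim X=4$ is even and so permutations of factors preserve orientation.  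Then
$$\S_{j,\lambda}\;:=\;\M_{k-j}|_{{\cal R}^o}\times_{{\cal R}^o}\Sym^j_\lambda(X)$$
is an oriented Fredholm family over ${\cal R}^o$ of index $2d_k-8j+4\ell(\lambda)$, and for $j\ge 1$ its codimension is $8j-4\ell(\lambda)\ge 4j\ge 4$.  Grouping strata of equal codimension $m$ into a single stratum $\S_m$ supplies the data required by Definition~\ref{Def4.2}(a)--(b).

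The main obstacle is the closure condition Definition~\ref{Def4.2}(c), namely that ${\cal T}_m=\bigsqcup_{m'\ge m}\S_{m'}$ is closed in $\ov{\M}_k|_{{\cal R}^o}$.  This reduces to two semicontinuity properties of Uhlenbeck limits: the total bubble degree $j$ is upper semicontinuous (bubbles may form from the connection itself, but existing bubble mass cannot dissolve), and within a fixed $j$ the length $\ell(\lambda)$ is lower semicontinuous (points in $\mathrm{supp}(\mu)$ may coalesce, not separate).  Passing to a subsequence on which $(j,\lambda)$ is constant, one then obtains $j_\infty\ge j$ and $\ell_\infty\le\ell$, hence $8j_\infty-4\ell_\infty\ge 8j-4\ell\ge m$, so ${\cal T}_m$ is closed.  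Combined with the previous paragraph, this shows the restriction of $\ov\pi$ over ${\cal R}^o$ is a Fredholm-stratified thin compactification of $\pi^o$; continuity and properness of $\ov\pi$ hold on all of ${\cal R}$, completing the proof.
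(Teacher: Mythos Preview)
Your approach is essentially the paper's: realize $\ov\M_k$ as the Uhlenbeck compactification, refine the symmetric-product strata by partition type, compute that the stratum indexed by $(j,\lambda)$ has Fredholm index $2d_k-(8j-4\ell(\lambda))$, and verify closure by tracking how $(j,\ell)$ changes under Uhlenbeck limits.  The paper organizes this via the ``level'' $\Lambda(\alpha)=2|\alpha|-\ell(\alpha)$ and appeals to Lemma~\ref{LemmaA1}, but the content is the same.

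There is, however, a genuine gap in your closure argument.  You assert ``$j_\infty\ge j$ and $\ell_\infty\le\ell$'', but the second inequality is false in general.  Consider a sequence $([A_n],g,\delta_x)$ in $\M_{k-1}\times\Sym^1_{(1)}(X)$ (so $j=1$, $\ell=1$) where the connections $A_n$ bubble at a point $y\neq x$; the Uhlenbeck limit is $([B],g,\delta_x+\delta_y)$ with $j_\infty=2$ and $\ell_\infty=2>\ell$.  Your two semicontinuity statements are individually correct as stated---$j$ can only go up, and \emph{at fixed $j$} the length $\ell$ can only go down---but they do not combine to give $\ell_\infty\le\ell$ once $j$ jumps.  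What is true, and what the paper proves, is that if $j'=j_\infty-j$ units of new bubble mass concentrate at (at most) $\ell'$ new points, then $\ell'\le j'$ and $\ell_\infty\le\ell+\ell'$, whence
\[
8j_\infty-4\ell_\infty\ \ge\ 8(j+j')-4(\ell+\ell')\ =\ (8j-4\ell)+4(2j'-\ell')\ \ge\ 8j-4\ell.
\]
Equivalently, in the paper's language, the level $\Lambda$ of the limiting partition satisfies $\Lambda(\delta)\ge\Lambda(\alpha)+\Lambda(\beta)\ge\Lambda(\alpha)$, where $\beta$ records the new bubbling.  Once you insert this correction your argument goes through.
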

 
 \begin{proof}
  We follow the notation and discussion in Section~4.4 of \cite{DK}.  
 Using the topology of weak convergence  (as defined by Condition~4.4.2 in  \cite{DK}),   one  defines the Uhlenbeck compactification $\ov\M_k$ by setting
\be\label{6.UC}
\ov\M_{k}\ =\ \M_{k}\cup \SS,   
\ee
where $\SS$ is the union of the strata $S_{jk}=\M_{k-j}\times {\rm Sym}^j(X)$ for $0<j< k$ (noting that $\M_0$ is empty because there are no flat connections).  Then $\ov{\M}_k$ is paracompact and metrizable \cite[Section 4.4]{DK},   and $\pi$ extends to a map $\ov\pi:\ov\M_k\to {\cal R}$ whose restriction to each stratum is Fredholm.

The proof is completed by applying Lemma~\ref{LemmaA1}.  For this,  it suffices to define a stratification  on $\ov\M_k$, different from the one in \eqref{6.UC},  whose restriction  $\ov{\M}^o_k=\ov\M_k|_{\cal R^o}$ satisfies the hypotheses of  Lemma~\ref{LemmaA1}.

The  new strata are labeled by partitions.
 A partition is a non-increasing  sequence $\alpha=(\alpha_1, \dots, \alpha_\ell)$ of positive integers;  its length  $\ell(\alpha)=\ell$ and its degree $|\alpha|=\sum \alpha_i$ satisfy $\ell(\alpha)\le |\alpha|$.  We also consider $(0)$ to be a partition with $\ell(0)=|(0)|=0$.   Let ${\cal{P}}_k$ be the set of all partitions $\alpha$ with $|\alpha|\le k$.  Define the {\em level} of $\alpha$ to be 
\be\label{A.defLambda}
\Lambda(\alpha) = 2|\alpha|-\ell(\alpha),
\ee
and note that $\Lambda(\alpha)\ge 0$ with equality if and only if $\alpha=(0)$.

Given a four-manifold $X$ and  an integer $k\ge 0$, regard $\mbox{Sym}^kX$ as formal positive sums $\sum \alpha_i x_i$ of distinct points of $X$ associated with some partition  $\alpha=(\alpha_1, \dots, \alpha_\ell)$ with $|\alpha|=k$.  Let 
 $\Delta_\alpha$ be the set of all such sums associated with  a given $\alpha$.  Then  $\Delta_\alpha$ is a manifold of dimension $4\ell(\alpha)$, and $\mbox{Sym}^kX$ is the  disjoint union of the sets $\Delta_\alpha$ over all $\alpha$ with  $|\alpha|=k$.

With these preliminaries understood, we re-stratify the compactification \eqref{6.UC} by writing
\bear\label{7.newstat}
\ov{\M}_k\ =\ \M_{k}\ \cup\  \bigcup_{\alpha\in{\cal P}_k}\ \SS_\alpha,
\eear
where ${\cal S}_0=\M_k$ and 
$$
\SS_\alpha\ =\ \M_{k-|\alpha|}\times \Delta_\alpha.
$$
By the choice of ${\cal R}^o$,  the restriction $\SS_\alpha^o$ of   $\SS_\alpha$ over ${\cal R}^o$  is, for each $\alpha$,  a Banach manifold with a Fredholm projection $\pi_\alpha:\SS_\alpha^o \to {\cal R}^o$  of  index 
\be\label{A.iota}
\iota_\alpha \ =\ 2d(k-|\alpha|)+4\ell(\alpha)\ =\ 2d_k-4\Lambda(\alpha),
\ee
where $d_k$ is the index \eqref{6.d_k}.  

One then sees that conditions (a) and (b) of Lemma~\ref{LemmaA1} hold for the restriction of \eqref{7.newstat} over ${\cal R}^o$. To verify (c), suppose that a sequence $(A_n, \sum \alpha_i (x_n)_i)$ converges in the weak topology.  Then $\{A_n\}$ converges to a formal instanton $(B, \sum \beta_jy_j)$ with $B\in \M^o_{k-|\alpha|-|\beta|}$, and $\sum \alpha_i (x_n)_i$ converges to $\sum \gamma_m z_m$ with $\ell(\gamma)\le\ell(\alpha)$ and $|\gamma|=|\alpha|$.  Thus the limit is
$$
\left(B, \sum \beta_jy_j +\sum \gamma_m z_m\right) \in \M^o_{k-|\delta|} \times \Delta_\delta,
$$
with $\ell(\delta)\le \ell(\beta)+\ell(\gamma)\le  \ell(\alpha)+\ell(\beta)$ and $|\delta|=|\beta|+|\gamma|=|\alpha|+|\beta|$.   The level \eqref{A.defLambda} of this limit stratum is therefore
$$
\Lambda(\delta) \ =\ 2|\delta|-\ell(\delta)\ \ge\ \Lambda(\alpha)+\Lambda(\beta) \ge \Lambda(\alpha),
 $$
with equality if and only if $\beta=(0)$ and $\gamma=\alpha$. This, together with \eqref{A.iota},  implies property (c) of Lemma~\ref{LemmaA1}.  The proposition follows. 
\end{proof}

\vspace{5mm}
\setcounter{equation}{0}
\section{Relative fundamental classes and Donaldson polynomials}
\label{section8}
\bigskip

As  in Section~7, the universal moduli space \eqref{ASDmodulidiagram} of anti-self-dual instantons on a 4-manifold $X$ admits a compactification, the Uhlenbeck compactification  $\ov\pi:\ov\M_k\to{\cal R}$.  Under the assumptions of Lemmas~\ref{lemma7.1} and \ref{donlemma},    there is an open dense subset ${\cal R}^o$ of ${\cal R}$ and  a diagram 
 $$\begin{gathered}
\label{familymapdiagram}
\xymatrix{
\ov\M_k^o\ar[d]^{\ov\pi^o}  \ \ar[r]\  &\ov\M_k\ar[d]^{\ov\pi}\\
{\cal R}^o \  \ar@{^(->}[r] \  &{\cal R}\
}
\end{gathered} $$
where   $\ov\M_k^o$ is the restriction of $\ov\M_k$ over ${\cal R}^o$,  and
\begin{itemize}
\item[(i)] $\ov\pi^o:\ov\M_k^o\to {\cal R}^o$ is a Fredholm-stratified thin family.  \\[-2mm]
\item[(ii)]   Every path in ${\cal R}$  is the limit of paths in ${\cal R}^o$.   
\end{itemize}
Let ${\cal R}^{reg}$ be the set of regular values  of  the family (i).   By the Sard-Smale theorem,  
 ${\cal R}^{reg}$ is dense in ${\cal R}^o$, and hence is dense in ${\cal R}$.

With this setup,  Lemma~\ref{Lemma4.3} and Theorem~\ref{3.existsVFC}  produce a relative fundamental class for   $\ov{\M}^o_k\to{\cal R}^o$.  In fact, Lemma~\ref{corelemma} gives a stronger conclusion:  it shows that the Uhlenbeck compactification is a relatively thin family over the entire space of metrics.  Thus we obtain  a  relative fundamental class for Donaldson theory:

 \medskip
 
 \begin{prop}
 \label{6.DonaldsonVFC}
Let  $X$ be a  closed, oriented 4-manifold with $b^+_2(X)>1$, and  let $E\to X$ a $U(2)$ vector bundle with instanton number $k$ and $c_1(E)$  odd.  Then
\smallskip
\begin{enumerate}
\item[(a)] The Uhlenbeck compactification is a relatively thin family  with index $2d_k$ with ${\cal R}^*$ equal to  ${\cal R}^{reg}$ and 
 $d_k$ given by \eqref{6.d_k}. \\[-3mm]

\item[(b)] A homology orientation for $X$ determines a  relative fundamental class 
\bear\label{8.rfc}
[\ov\M_k(g)]^{rel} \in\H_{2d_k}\left(\ov\M_k(g)\right),
\eear
where $\ov\M_k(g)$ is the fiber of $\ov\M_k$ over $g\in{\cal R}$.
\end{enumerate}
\end{prop}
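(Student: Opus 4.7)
The plan is to chain together the preparatory lemmas and then invoke Theorem~\ref{3.existsVFC}. Lemma~\ref{lemma7.1} supplies the open dense subset ${\cal R}^o\subset{\cal R}$ on which reducibles are avoided; Lemma~\ref{donlemma} upgrades $\ov\pi^o:\ov\M_k^o\to{\cal R}^o$ to a Fredholm-stratified thin family; and Lemma~\ref{corelemma} then extends the relatively-thin-family structure from ${\cal R}^o$ to all of ${\cal R}$. Theorem~\ref{3.existsVFC} produces the relative fundamental class as a formal consequence.

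For part (a), I would apply Lemma~\ref{corelemma} with $\P={\cal R}$ and $\P^o={\cal R}^o$. Its hypothesis (ii)---that the restriction $\ov\pi^o$ is a Fredholm-stratified thin family of the claimed index---is precisely Lemma~\ref{donlemma}; the top stratum is the irreducible moduli space $\M_k^{irred}$, whose Fredholm index is $2d_k$ with $d_k$ as in \eqref{6.d_k}. Its hypothesis (i)---that every path in ${\cal R}$ is a $C^0$-limit of paths in ${\cal R}^o$---is Lemma~\ref{lemma7.1}(ii). The conclusion of Lemma~\ref{corelemma} is exactly that $\ov\pi:\ov\M_k\to{\cal R}$ is a relatively thin family of relative dimension $2d_k$, with ${\cal R}^*$ equal to the set of regular values of $\ov\pi^o$; by Lemma~\ref{Lemma4.3} this set coincides with ${\cal R}^{reg}$, giving (a).

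For part (b), the only ingredient still needed is a relative orientation of the Fredholm family $\pi:\M_k^{irred}\to{\cal R}$ in the sense required by Definition~\ref{Def4.2}(a). This is the classical Donaldson orientation: a homology orientation of $X$---an orientation of $H^1(X;\R)\oplus H^+_2(X;\R)$---canonically trivializes the determinant line bundle $\det d\pi$ of the ASD deformation complex, as recorded in \cite[7.1.39]{DK} and already referenced in Section~\ref{section7}. With this relative orientation fixed, each regular fiber $\M_k(g)$, $g\in{\cal R}^{reg}$, becomes an oriented $2d_k$-manifold whose Uhlenbeck compactification $\ov\M_k(g)$ carries a fundamental class by Theorem~\ref{1.VFC}. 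Theorem~\ref{3.existsVFC} then produces the unique relative fundamental class $[\ov\M_k(g)]^{rel}\in\H_{2d_k}(\ov\M_k(g))$ for every $g\in{\cal R}$.

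The argument is essentially a formal repackaging, and I do not expect any real obstacle: the analytic content---Uhlenbeck compactness, transversality for generic $g$, avoidance of reducibles, and the verification of the partition stratification conditions of Definition~\ref{Def4.2}---has all been absorbed into Lemmas~\ref{lemma7.1} and \ref{donlemma}. The one point worth flagging is that the extension of $[\ov\M_k(g)]^{rel}$ across non-generic metrics is not obtained by a geometric construction on the individual fiber $\ov\M_k(g)$, but via the \Cech limit procedure of Extension Lemma~\ref{extLemma}; in particular, the class is defined even at metrics $g$ where reducibles appear in $\ov\M_k(g)$ or where the stratified-manifold structure of the fiber degrades, which is precisely the feature that makes Proposition~\ref{6.DonaldsonVFC} stronger than what one can extract by working metric-by-metric.
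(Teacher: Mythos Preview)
Your proposal is correct and follows exactly the route the paper takes: the paragraph preceding Proposition~\ref{6.DonaldsonVFC} derives it from Lemmas~\ref{lemma7.1} and \ref{donlemma} via Lemma~\ref{corelemma}, with Theorem~\ref{3.existsVFC} supplying the relative fundamental class and the homology orientation providing the relative orientation, and the paper does not give a separate formal proof beyond that. Your write-up is, if anything, more explicit than the paper's one-paragraph justification.
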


\vspace{4mm}
 
 To obtain invariants,  one would like, as in \eqref{3.Invariants},  to consider pairings
 $$
 \left\langle  \alpha,\ [\ov\M_k(g)]^{rel} \right\rangle
 $$
 where $\alpha$ is the restriction to $\ov\M_k(g)$ of a    \Cech cohomology class defined on ${\cal B}_k$.  Unfortunately,  this is not as straightforward as one might hope, and one must work harder. 
 
  Following Donaldson, the natural cohomology classes to consider are those in the image of the $\mu$-map
  $$
 \mu: H_2(X; \Q) \to \cHH^2({\cal B}_k^{irred}; \Q)
 $$
 (cf. Chapter~5 of \cite{DK}).  For each choice of classes $A_1, \dots, A_{d_k}\in H_2(X;\Q)$, the product $\mu(A_1)\cup \cdots \cup \mu(A_{d_k})$ restricts to a class
$$
\mu=\mu(A_1, \dots, A_{d_k}) \in \cHH^{2d_k}(\M^{irred}_k; \Q)
$$
whose dependence on the $A_i$  is multilinear and symmetric.  For each $g\in{\cal R}$, this further restricts under the inclusion $\iota_g:\M^{irred}_k(g)\hookrightarrow\M^{irred}_k$ of the fiber over $g$ 
 to a class
\be\label{5.defalpha}
 \iota_g^*\mu \in \cHH^{2d_k}(\M^{irred}_k(g); \Q).
\ee
But  these are not  classes in the cohomology of 
 $\ov\M_k(g)$, so cannot be  directly paired with the relative fundamental class.  Thus we proceed more indirectly.

The key observation is that, for each {\em regular} metric $g$,  the  classes  \eqref{5.defalpha} extend over the compatification $\ov\M_k(g)$ in a way that is consistent along paths.
(Here ``regular'' means $g\in {\cal R}^{reg}$, which is equivalent to conditions~9.2.4 and implies 9.2.13 in \cite{DK}.)  
One can then apply Extension Lemma~\ref{extLemma}  to obtain a relative fundamental class in 0-dimensional \Cech homology, which  yields invariants.  The remainder of this section gives the details.

\medskip

 \begin{lemma}
 \label{lemma6.3} 
 For each $A_1, \dots, A_{d_k}\in H_2(X;\Z)$, 
 \begin{enumerate}
\item[(a)] For each  $g\in {\cal R}^{reg}$, the class  \eqref{5.defalpha}, which depends on  $A_1, \dots, A_{d_k}$,  extends uniquely to an element $\mu_g$ of $\cHH^{2d_k}(\ov\M_k(g); \Q)$.\\[-3mm]
\item[(b)] There is a unique association  
 $$
 g\mapsto \alpha_g \in  \cHH_0(\ov\M_k(g); \Q)
 $$
   such that  
 \begin{enumerate}
\item[(i)]   for each $g\in {\cal R}^{reg}$, $\alpha_g$ is the cap product with the  fundamental class \eqref{8.rfc}:
\bear\label{8.cap}
   \alpha_g\ =\   [\ov\M_k(g)]^{rel}  \cap \mu_g.  
\eear 
\item[(ii)] the  consistency condition \eqref{8.consistency} below holds for every path $\gamma$ in ${\cal R}$.
\end{enumerate}
\end{enumerate}
 \end{lemma}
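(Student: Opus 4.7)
The plan is to settle (a) by invoking Donaldson's extension of the $\mu$-classes over the Uhlenbeck compactification (Chapter~9 of \cite{DK}), then build (b) by installing the cap-product definition \eqref{8.cap} over regular metrics and applying Extension Lemma~\ref{extLemma} along paths connecting them.

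For part (a), fix $g\in{\cal R}^{reg}$ and set $S_g=\ov\M_k(g)\setminus \M_k(g)$.  By Lemma~\ref{lemma7.1} we have $\M_k^{irred}(g)=\M_k(g)$, so the class $\iota_g^*\mu$ already lives on $\M_k(g)$.  Donaldson's geometric construction realizes each $\mu(A_i)$ by a codimension-$2$ subvariety of ${\cal B}_k^{irred}$ whose closure in $\ov\M_k(g)$ stays of codimension~$2$; intersecting $d_k$ such representatives in general position and taking the Poincar\'e dual yields $\mu_g\in \cHH^{2d_k}(\ov\M_k(g);\Q)$ restricting to $\iota_g^*\mu$.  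For uniqueness, the partition-indexed stratification from the proof of Lemma~\ref{donlemma} presents $S_g$ as a union of manifolds $\S^o_\alpha(g)$ of real dimension $2d_k-4\Lambda(\alpha)\le 2d_k-4$, so the induction in the proof of Lemma~\ref{Lemma2.2} gives $\sHH_p(S_g;\Z)=0$ for $p>2d_k-4$.  Combined with the fact that Donaldson's representatives can be chosen disjoint from $S_g$, this forces any two extensions of $\iota_g^*\mu$ to agree.

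For part (b), define $\alpha_g$ by \eqref{8.cap} for $g\in{\cal R}^{reg}$.  To apply Extension Lemma~\ref{extLemma}, I need consistency along a dense family of paths between regular metrics.  Given a generic smooth path $\gamma:[0,1]\to{\cal R}$ from $p$ to $q$ with both endpoints in ${\cal R}^{reg}$, the pullback $\ov\M_k(\gamma)$ is a Fredholm-stratified thin cobordism, and Proposition~\ref{6.DonaldsonVFC} together with Lemma~\ref{1.10} supplies a relative fundamental class on this cobordism that restricts to $[\ov\M_k(p)]^{rel}$ and $[\ov\M_k(q)]^{rel}$ on the endpoint fibers.  The one-parameter analogue of (a) produces a class $\mu_\gamma\in \cHH^{2d_k}(\ov\M_k(\gamma);\Q)$ whose endpoint restrictions are $\mu_p$ and $\mu_q$; capping $[\ov\M_k(\gamma)]^{rel}$ with $\mu_\gamma$ and pushing forward under the two endpoint inclusions then gives $(\iota_0)_*\alpha_p=(\iota_1)_*\alpha_q$ in $\cHH_0(\ov\M_k(\gamma);\Q)$ by naturality of the cap product, which is exactly the required consistency condition.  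Existence and uniqueness of the extension to all $g\in{\cal R}$ then follow from Extension Lemma~\ref{extLemma}.

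The hardest point is the one-parameter version of (a): Donaldson's geometric representatives of the $\mu$-classes must be chosen to pull back consistently along a generic path $\gamma$, so that the cobordism-level extension $\mu_\gamma$ genuinely exists and restricts to $\mu_p$, $\mu_q$ on the endpoint fibers.  This amounts to a transversality statement for the universal moduli space over ${\cal R}$, analogous to the fiberwise argument but carried out in one higher parameter dimension, and is where enlarging the parameter space from a single metric to all of ${\cal R}$ becomes essential.
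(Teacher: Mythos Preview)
Your approach is essentially the paper's: cite Donaldson--Kronheimer for the extension of the $\mu$-classes, use the thinness of the boundary for uniqueness, define $\alpha_g$ by the cap product on regular metrics, check consistency along generic paths via a cobordism-level $\mu_\gamma$, and invoke the Extension Lemma.

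Two points deserve tightening. First, in your uniqueness argument for (a) you quote the Steenrod \emph{homology} vanishing $\sHH_p(S_g)=0$ for $p>2d_k-4$, but what is actually needed is that the restriction $\cHH^{2d_k}(\ov\M_k(g);\Q)\to\cHH^{2d_k}(\M_k(g);\Q)$ is injective, i.e.\ \v{C}ech \emph{cohomology} vanishing $\cHH^{2d_k}(S_g;\Q)=0$. The same inductive stratification argument (as in Lemma~\ref{Lemma2.2}) delivers this, and the paper phrases the whole step more cleanly: the Donaldson--Kronheimer input is that $\iota_g^*\mu$ has a \emph{compactly supported} \v{C}ech representative in $\M_k(g)$, and then the long exact sequence of the pair $(\ov\M_k(g),S_g)$ gives both existence and uniqueness at once---no Poincar\'e duality or disjointness side-argument is needed. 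Second, to feed the Extension Lemma you need your ``generic smooth paths'' to be dense in the space of \emph{all} paths in ${\cal R}$, not just in ${\cal R}^o$; this is where Lemma~\ref{lemma7.1}(ii) (every path in ${\cal R}$ is a limit of paths in ${\cal R}^o$) is used, together with the endpoint-fixing trick from the proof of Lemma~\ref{corelemma}. You gesture at this in your last paragraph, but it is a density statement rather than a transversality one, and it should be stated explicitly.
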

 
 \begin{proof} 
(a)  Donaldson and Kronheimer showed  \cite[Subsection~9.2.3]{DK} that for  each regular   $g$,  $\iota_g^*\mu$ has a \Cech representative with compact support in $\M_k^{irred}(g)$, which is equal to $\M_k(g)$ by Lemma~\ref{lemma7.1}(i).  Because $\ov\M_k(g)$ is a  Fredholm-stratified thin compactification  of $\M_k(g)$, the long exact sequence in \Cech cohomology, used as in the proof of Lemma~\ref{Lemma2.2}, shows that $\iota_g^*\mu$ extends uniquely   to a \Cech  class in the compactification
$$
\mu_g\in \cHH^{2d_k}(\ov\M_k(g);\Q).
$$

Furthermore, for each  regular path $\gamma$ in ${\cal R}^o$ with endpoints $g, g'$,  the pullback $\ov\M_k(\gamma)$ over $\gamma$  of the compactified moduli space contains no reducible connections and is a thin compactified cobordism as defined in Section~2.4 above.   Again as in \cite{DK}, the class $\iota_\gamma^*\mu$  has a representative compactly supported in $\M_k(\gamma)$, so extends uniquely  to a class $\mu_\gamma$ on $\ov\M_k(\gamma)$.   The uniqueness of these extensions implies that 
 \bear
 \label{5.consistency}
\mu_g=\iota_g^*\mu^\gamma \ \mbox{in}\   \cHH^{2d_k}(\ov\M_k(g); \Q)   \qquad  \mbox{and}\qquad   \mu_{g'}=\iota_{g'}^*\mu^\gamma\ \mbox{in}\   \cHH^{2d_k}(\ov\M_k(g'); \Q).
\eear

\smallskip

(b)   For each regular $g$,  define $\alpha_g$  to be the cap product \eqref{8.cap}.    By the naturality of cap products, \eqref{5.consistency} implies a  consistency condition for $\alpha_g$ of the form \eqref{3.consistent}, namely
\bear\label{8.consistency}
(\iota_0)_*\al_{g}\ =\ (\iota_1)_* \al_{g'} \quad \mbox{in} \quad \cHH_0(\ov\M_k(\gamma);\Q)
\eear
for every regular path $\gamma$.   Lemma~\ref{lemma7.1}(ii), together with the middle paragraph of the proof of  Lemma~\ref{corelemma},  shows that each path $\gamma$ in ${\cal R}$ with endpoints $g, g'\in {\cal R}^{reg}$ is a limit of paths $\gamma_k$  in ${\cal R}^o$ with the same endpoints. But each $\gamma_k$ is a limit of regular paths in  ${\cal R}^o$ with the same endpoints (cf. the proof of Lemma~\ref{Lemma4.3}), which means that the regular paths are dense in the space of all paths in ${\cal R}$ from $g$ to $g'$.  The hypotheses of Lemma~\ref{extensionlemma} then hold for $g \mapsto \alpha_g$, with $\P^*$ taken to be ${\cal R}^{reg}$, and the conclusion of  Lemma~\ref{extensionlemma} gives (b).  
 \end{proof}

 \begin{rem}
Alternatively, one could work with the index~0 universal ``cutdown'' moduli spaces defined by \cite[(9.2.8)]{DK}, and regard the class $\alpha_g$ in \eqref{8.cap} as the relative fundamental class of the cutdown moduli space.
 \end{rem}

 \bigskip

We can now use the class $\alpha_g$ of Lemma~\ref{lemma6.3}, which depends on $A_1, \dots, A_{d_k}$, to define numerical invariants.  For each $g\in{\cal R}$ there is a map
$$
q_k(g): {\rm Sym}^{d_k}H_2(X;\Q) \to \Q
$$
 defined by evaluating $\alpha_g$ on the class $1\in \cHH^0(\ov\M_k(g);\Q)$:
\bear\label{qkofg}
 q_k(g)\ =\ \langle 1, \alpha_g\rangle.
\eear
 
 \begin{prop}
 \label{Prop8.3}
 The map $q_k(g)$ is independent of $g\in{\cal R}$, and is equal to Donaldson's polynomial invariants.
 \end{prop}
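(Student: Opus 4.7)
The plan is to prove the two assertions separately, using the machinery assembled above. First I establish independence of $g$ directly from the consistency property of Lemma~\ref{lemma6.3}(b)(ii); then I identify $q_k(g)$ with the standard Donaldson polynomial at any regular metric, where both sides are defined by the same pairing.

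For independence, fix $g, g' \in {\cal R}$. Since ${\cal R}$ is a convex cone in a Banach space, it is path-connected, so choose any path $\gamma:[0,1]\to {\cal R}$ from $g$ to $g'$. By Lemma~\ref{lemma6.3}(b)(ii),
$$
(\iota_0)_* \al_g \ =\ (\iota_1)_* \al_{g'} \qquad \mbox{in } \cHH_0(\ov\M_k(\gamma); \Q).
$$
The constant function $1 \in \cHH^0(\ov\M_k(\gamma); \Q)$ restricts to $1$ on each fiber $\ov\M_k(g)$ and $\ov\M_k(g')$ under the inclusions $\iota_0, \iota_1$. Since $\ov\M_k(\gamma)$ is compact, we can pair $1$ against both sides of the above equality and use naturality of the pairing to obtain
$$
q_k(g) = \langle 1, \al_g\rangle = \langle 1, (\iota_0)_*\al_g\rangle = \langle 1, (\iota_1)_*\al_{g'}\rangle = \langle 1, \al_{g'}\rangle = q_k(g').
$$
This is exactly the argument of Corollary~\ref{Cor4.last}, applied to the 0-dimensional class $\al_g$ rather than to the full relative fundamental class.

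For the identification with Donaldson's invariants, by independence it suffices to check the equality at a single regular metric $g \in {\cal R}^{reg}$. At such a $g$, the fiber $\ov\M_k(g)$ is a thin compactification of the smooth oriented $2d_k$-manifold $\M_k(g)$ (by Lemma~\ref{lemma7.1}(i), no reducibles occur), and $[\ov\M_k(g)]^{rel}$ is the fundamental class from Theorem~\ref{1.VFC}. By definition \eqref{8.cap}  and the standard cap-cup adjunction for compact Hausdorff spaces,
$$
q_k(g) = \langle 1, [\ov\M_k(g)]^{rel} \cap \mu_g \rangle = \langle \mu_g, [\ov\M_k(g)]^{rel}\rangle.
$$
Now $\mu_g$ is the unique \Cech extension of $\iota_g^*\mu$ from $\M_k(g)$ to $\ov\M_k(g)$, obtained from a representative compactly supported in $\M_k(g)$ as in \cite[Subsection~9.2.3]{DK}. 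Under the restriction isomorphism $\rho_M: \sHH_{2d_k}(\ov\M_k(g)) \to \sHH_{2d_k}(\M_k(g))$ of \eqref{2.MoMisom} (passed to the \Cech setting via Lemma~\ref{1.3theorieslemma}), the fundamental class $[\ov\M_k(g)]^{rel}$ corresponds to $[\M_k(g)]$. The  pairing therefore reduces to evaluating the compactly supported representative of $\iota_g^*\mu(A_1,\ldots,A_{d_k})$ on $[\M_k(g)]$, which is precisely the definition of the Donaldson polynomial invariant $q_k(A_1,\ldots,A_{d_k})$ as given in \cite[Sections 5.6 and 9.2]{DK}.

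The main obstacle in fleshing this out is bookkeeping rather than genuine difficulty: one must verify that the cap-pairing adjunction goes through in the \Cech / Alexander--Spanier framework of Section~1, and that the unique \Cech extension $\mu_g$ of $\iota_g^*\mu$ across the Uhlenbeck boundary coincides with the ``cutdown'' construction used by Donaldson-Kronheimer to give meaning to $\langle \mu, [\ov\M_k(g)]\rangle$ despite the singular stratum. Both are handled by the uniqueness part of the long exact sequence of the pair $(\ov\M_k(g), S)$ (as in Lemma~\ref{Lemma2.2}), combined with the fact that both extensions agree on the open manifold $\M_k(g)$. The independence of the choice of $\gamma$ and the independence from $g$ in the non-regular case is already built into the Extension Lemma used to define $\al_g$ in Lemma~\ref{lemma6.3}.
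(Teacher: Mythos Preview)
Your proposal is correct and follows essentially the same approach as the paper: independence of $g$ via path-connectedness of ${\cal R}$ and the consistency condition \eqref{8.consistency} (as in Corollary~\ref{Cor4.last}), followed by identification with Donaldson's definition at a regular metric using the cap-cup adjunction $\langle 1, [\ov\M_k(g)]^{rel}\cap\mu_g\rangle = \langle\mu_g, [\ov\M_k(g)]^{rel}\rangle$ and reduction to the compactly supported pairing on the open manifold $\M_k(g)=\M_k^{irred}(g)$. Your version spells out the pairing with $1$ and the passage through the restriction isomorphism \eqref{2.MoMisom} in more detail than the paper does, but the logical structure is the same.
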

 
 \begin{proof}
 First note  that the space ${\cal R}$ of Riemannian metrics is path-connected;  in fact, it is contractible. The consistency condition \eqref{8.consistency}  then shows that $q_k(g)$ is independent of $g$, exactly as in the proof of Corollary~\ref{Cor4.last}.  For regular $g$, we can use \eqref{8.cap} to rewrite  \eqref{qkofg}  as
 $$
 q_k(A_1, \dots, A_{d_k})(g)\ =\ \left\langle \mu_g,\ [\ov\M_k(g)]^{rel} \right\rangle  \ =\ \left\langle \iota_g^*\mu(A_1, \dots, A_{d_k}),\ [\M^{irred}_k(g)]\right\rangle,
 $$
where the last term is a pairing between a compactly supported cohomology class and the fundamental class of a non-compact manifold.
 This agrees  with Donaldson's  definition of $q_k$:  see Section~9.2 of \cite{DK}, especially (9.2.18)  and  the top of page 360.  
 \end{proof}

 Proposition~\ref{Prop8.3}  is a re-casting of Donaldson's theorem \cite{D1} in the form presented in \cite{D2}: it  implies that  the Donaldson polynomials are  invariants of the smooth structure of the manifold $X$,  depending on the class $c_1(E)$,  the orientation, and the homology orientation.  In fact, changes in $c_1(E)$ and the homology orientation  change  the Donaldson polynomial in a specific way \cite{MM}.     In the literature, the  story is completed by removing the assumption that $c_1$ is  odd by using the stabilizing trick of Morgan and Mrowka; see \cite{MM} or \cite[Section 6.3]{D2}.

 \medskip
 
 This viewpoint makes clear that the invariance of the Donaldson polynomials follows directly from two core facts:  (i) the Uhlenbeck compactification is a Fredholm-stratified thin family over an open, dense,  path-connected  subset ${\cal R}^o$ of the space of metrics, and (ii) $2d_k$-dimensional products of classes $\mu(A_i)$ extend to the compactification  of regular fibers.  Both appear  explicitly in the work of Donaldson.  As we have seen, these same two facts imply the existence of a relative fundamental class {$[\ov{\M}_k(g)]^{rel}$   defined for every metric $g$  and every $k$.

\vspace{.5cm}
\bigskip
\setcounter{equation}{0}
\section{Gromov--Witten theory}
\label{section9}
\bigskip

In the remaining two sections, we consider  thin compactifications in  Gromov--Witten theory.  This section summarizes the well-known setup; details can be found  in  \cite{ms2},  \cite{rt1},  \cite{rt2}, and \cite{IP}.  Throughout, we work in the stable range $2g-2+n> 0$.

The Deligne-Mumford spaces $\ov\M_{g,n}$ are  at  the foundation of Gromov--Witten theory.  Points in $\ov\M_{g,n}$ represent equivalence classes $[C]$ of stable,  connected nodal complex curves $C$ of arithmetic genus $g$ with $n$ marked points $x_1,\dots x_n$; those without nodes form the principal stratum $\M_{g,n}$.  There is a universal curve
\bear
\label{7.UCdiagram}
\xymatrix{
\ov{\U}_{g,n} \ar[d]^{\pi}\\
\ov{\M}_{g,n}
}
\eear
with the property that for each  stable curve $C$ as above there is a map $C\to \ov{\U}_{g,n}$ whose image is a fiber of \eqref{7.UCdiagram} that is biholomorphic (as a marked curve) to $C/\Aut(C)$.  More generally, for any connected, $n$-marked genus $g$ nodal curve $C$,  there is a     map 
\bear\label{8.psi}
\phi: C\to \ov{\U}_{g,n}
\eear
defined as the composition $C\to C^{st}\to \ov\U_{g,n}$ where $C^{st}$ is the stable curve (the stable model of $C$)  obtained by  collapsing  all unstable irreducible components of $C$, and  the second map is as above.

Now fix   a closed symplectic manifold $(X,\w)$,  a large   integer  $l$ and a number $r>2$.
As in Section 3.1 of \cite{ms2}, let   $\J$ be the smooth separable Banach manifold of all $C^l$   $\w$-tame almost complex structures $J$ on $X$.  We consider maps $f:C\to X$ whose domain is an $n$-marked   connected nodal curve $C$ with complex structure $j$.
Such a map is called $J$-holomorphic if 
$$
\del_Jf=\tfrac12(df+J df j)=0,
$$
and two such maps are regarded as equivalent if they differ by reparametrization.  
Let $\M_{A,g,n}(X)$ denote the moduli space of all equivalence classes    $([f], J)$ of  pairs $(f, J)$, where 
 $J\in \J$  and $f$ is a  $J$-holomorphic map of Sobolev class $W^{l,r}$  with smooth stable domain that represents $A=[f(C)]\in H_2(X; \Z)$. 
One then has a continuous  projection $\pi$ and a continuous  stabilization-evaluation map $se$
\bear
\label{eq6.3}
\xymatrix{
\M_{A, g,n}(X) \ar[d]^{\pi} \ar[r]^{se\ \ } &\ov{\M}_{g,n}\times X^n \\
\J  &
}
\eear
defined by $\pi(f, J)=J$ and $se(f,J)= ([C], f(x_1),\dots, f(x_n))$.

More generally, each map $f:C\to X$ from a connected nodal curve has an associated graph map
\bear
\label{6.graphmap}
F=F_f:  C\to  \ov{\cal U}_{g,n} \times X  
\eear
defined by $F(x)=(\phi(x),\, f(x))$; this is an embedding if $\Aut (C)=1$. Following Ruan and Tian  \cite{rt2}, we  can use $F$ to expand the base of \eqref{eq6.3}, as follows.

The universal curve $\ov\U_{g,n}$   is projective;   fix  a holomorphic  embedding $\ov\U_{g,n}\hookrightarrow {\Bbb P}^M $.    
 For each fixed  almost complex structure $J$, consider sections $\nu$ of the bundle $\Hom(\pi_1^* T {\Bbb P}^M,  \pi_2^* TX)$ over ${\Bbb P}^M\times X$ that satisfy $J\circ \nu+\nu\circ j=0$,  where  this $j$ is the complex structure on $\PP^M$.   The space of all $C^l$ pairs $(J, \nu)$ of this form is  also a  smooth separable Banach manifold, which  we denote  by $\JV$.
Each  $(J,\nu)\in\JV$ defines a deformation $J_\nu$ of the product almost complex structure on  ${\Bbb P}^M\times X$, and therefore on $ \ov\U_{g,n} \times X$, by writing 
\bear\label{3.J.nu}
J_\nu= \l(\begin{matrix} j&0\\ -\nu\circ j &J\end{matrix}\r).
\eear 
We identify such $J_\nu$ with the pair $(J,\nu)$ and call it a {\em Ruan-Tian perturbation}.

A  map $f:C\to X$ is  $(J,\nu)$-holomorphic if its graph satisfies $\del_{J_\nu} F=0$,  or equivalently if $f$ satisfies
\bear\label{del=nu}
\del_J f(x)=\nu(\phi(x), f(x)).
\eear
Such a map  is called  {\em stable} if, for each irreducible component $C_i$  of $C$, either $C_i$  is stable or $f(C_i)$ is  not a single point.

 The map $J\mapsto (J,0)$ induces  a  smooth inclusion $\J\hookrightarrow \JV$ of Banach manifolds. Furthermore,   the maps $\pi$ and $se$ extend continuously over the universal moduli space $\ov{\M}_{A,g,n}(X)$ of all triples $(f, J,\nu)$ where $f$ is a stable $(J,\nu)$-holomorphic map, giving  continuous maps
\bear
\label{7.stableJV}
\xymatrix{
\ov{\M}_{A, g,n}(X) \ar[d]^{\ov\pi} \ar[r]^{se\ \ } &\ov{\M}_{g,n}\times X^n \\
\JV. &
}
\eear

  \medskip
  
  The analysis of these maps is standard;  see, for example,   Chapter~3 of \cite{ms2}, Section~3 of \cite{rt2}, and Sections~4 and 5.1 of \cite{IP}.  Let $\E^{m,r}$ (resp. $\F^{m,r}$)  denote the space of $W^{m,r}$ sections of the bundle $f^*TX$ (resp. $T^{0,1}C\otimes_\cx f^*TX$ ) over $C$.  The  space of first order deformations of the complex structure on $C$ is the finite-dimensional vector space  $H^{0,1}(TC)$. The linearization of the $(J, \nu)$-holomorphic map equation \eqref{del=nu}  at $(f,J,\nu)$ is a bounded linear   operator
$$
{\bf D}_{f,J_\nu}: \E^{m,r}\times  H^{0,1}(TC) \times T_{J_\nu}\JV \longrightarrow  \F^{m-1,r}
$$
given by formula  \cite[(3.10)]{rt2}; see also \cite[Prop~3.1.1]{ms2}.  The elliptic theory of this operator leads to  two important regularity properties:

\smallskip
\begin{enumerate}
\item[{\bf Reg 1}.] If ${\bf D}_{f,J_\nu}$ is surjective  and $\Aut (f)=1$, the universal  moduli space $\pi:\M_{A,g,n}(X)\to\JV$ in \eqref{eq6.3} is   a manifold near $(f,J,\nu)$ with a natural relative orientation  (see the proofs of \cite[Theorem~3.2]{rt2} or \cite[Theorem~3.1.5]{ms2}).  \\[-2mm]
\item[{\bf Reg 2}.]   If {\bf Reg 1} holds, then at each regular value $(J,\nu)$ of $\pi$, the fiber $\M_{A,g,n}^{J,\nu}(X)$ is a manifold whose dimension is the index of $D_{f,J_\nu}$, which is
\be
\label{7.index}
\iota(A,g,n)=2[c_1(A)+(N-3)(1-g)+n]
\ee
where $\dim X=2N$. 
\end{enumerate}
\smallskip

 \bigskip

The construction of  Gromov--Witten invariants now hinges on a single issue:  can one find a thin compactification of \eqref{eq6.3} so that the map $se$ extends  over the compactification to give  diagram   \eqref{7.stableJV}?
Doing so, even over a portion of $\JV$, allows us to define the Gromov--Witten numbers
\be
\label{6.GWnumbers}
GW_{A,g,n}(\alpha)\ =\ \left\langle (se)^*\alpha  ,\  [\ov{\M}^{J}_{A,g,n}]^{rel}\right\rangle \qquad \mbox{for all $\alpha\in \cHH^*(\ov{\M}_{g,n}\times X^n; \Q)$}.
\ee
Note that $\ov{\M}_{g,n}\times X^n$ is locally contractible, so by \eqref{1.CechSingular}  $\alpha$ can also be regarded as an element of rational singular cohomology.

\bigskip

More specifically, assuming the existence of a thin compactification, we can  apply the results of  Sections~1-6, with the following payoffs:

\smallskip

\begin{enumerate}[(a)]
\item  A thin compactification for the fiber over a single regular $J\in \J$ yields a relative fundamental class $[\ov{\M}^{J}_{A,g,n}]^{rel}$.  However, the numbers \eqref{6.GWnumbers} may not be invariant under changes in $J$.

\item  A thin compactification over a connected  neighborhood $\P$ of $J$ gives a relative fundamental class  at each $J\in\P$, and  by Corollary~\ref{Cor4.last} the numbers  \eqref{6.GWnumbers} are independent of $J$ in $\P$.

\item  A thin compactification over all of $\J$ or $\JV$ gives numbers   \eqref{6.GWnumbers} that   depend only on the symplectic structure of $(X,\w)$.

\item A thin compactification over the larger space $\J_{symp}$ of all tame pairs $(\w, J)$, completed in an appropriate Sobolev norm,  implies that  the  numbers   \eqref{6.GWnumbers} are invariants of the isotopy class of the symplectic structure on $X$.
\end{enumerate}

\medskip

We will give examples of this procedure in the next section.  Before proceeding, here are some  simple examples that illustrate the ideas of this section.
 
\medskip

\begin{example}[\sc Rational ghost maps]
\label{6.rationalghostex}
{\rm  For each $J\in \J$, every $J$-holomorphic map $f:S^2\to X$ representing the trivial class $A=0$  is a constant map.  It follows that $D_{f,J}$ is the $\del$ operator on the  trivial holomorphic bundle  $f^*TX$, and $f$ is regular  because   the sheaf cohomology group $H^1(S^2, f^* TX)$ vanishes.   Hence for $n\ge 3$ the fibers of the moduli space 
 $\ov \M^J_{0, 0, n}(X)\to\J$ are all regular and canonically identified with $\ov{\M}_{0,n}\times X$.   The relative fundamental class $[\ov\M^J(X)]^{rel}$ is therefore equal to the actual fundamental class $[\ov\M_{0, n}\times X]$ and the GW invariants \eqref{6.GWnumbers} are independent of $J\in\J$.}
 \end{example}
 
\smallskip

\begin{example}[\sc K3 surfaces]  {\rm Let $X$ be a K3 surface, and consider  the moduli space $\M(X)\to \J_{alg}$ of smooth rational holomorphic maps $(f,J)$ for algebraic $J\in\J$.
By a theorem of Mumford and Mori (see \cite{MMu}),  every algebraic K3 contains  a non-trivial rational curve, so  for each algebraic $J$ the fiber $\M^J_{A,0,0}(X)$ is non-empty for some $A\not= 0$. But by  \eqref{7.index} the index  $\iota(A,0,0)=-2$ is negative.    Thus $\M_{A,0,0}(X)\to \J_{alg}$ does not satisfy condition $\mbox{\bf Reg~1}$  for any algebraic $J$.

 Now  expand the base by considering $\pi:\M(X)\to \J_{cx}$ over the space of all integrable  almost complex structures.  Each $J\in \J_{cx}$ determines a 20-dimensional subspace $H^{1,1}(X;\R)$ of $H^2(X;\R)\cong \R^{22}$, and the resulting map $\J_{cx}\to \mbox{Gr}(20,22)$ to the Grassmannian is a submersion.  But $A\in H_2(X;\Z)$ can be represented by a $J$-holomorphic curve only if the Poincar\'{e} dual  of $A$ is an integral $(1,1)$ class.  It follows that $\M^J_{A,g,n}(X)$ is empty for all $J$ in a subset $\P\subset \J_{cx}$ whose complement  is a locally finite countable union of codimension~2  submanifolds.  Since empty fibers are regular, a relative fundamental class exists over $\P$ and is equal to 0.    Lemma~\ref{corelemma} then applies, showing that
 $$
[\ov\M^J_{A,g,n}(X)]^{rel} = 0
$$
for all  $A\not=0$, $g$ and $n$, and all $J\in \J_{cx}$, including the algebraic $J$. 
 }
\end{example}

\smallskip

\begin{example}[\sc Convex manifolds] {\rm  A complex algebraic  manifold $(X, \w,  J)$ is called {\em convex} if  $H^1(C, f^*TX)=0$ for stable $J$-holomorphic maps $f:S^2\to X$.  Examples include projective spaces, Grassmannians, and Flag manifolds.   Convexity implies that all $J$-holomorphic maps with smooth domain are regular, so $\M^J_{A,0,n}(X)$ is smooth and complex.   It is also a quasi-projective variety (cf. \cite{fp}), so its closure is a thin compactification.  Hence  by  Lemma~\ref{Lemma2.2}        there is a relative fundamental class  $[\ov{\M}^J_{A,0,n}(X)]^{rel}$ for the given   $J$; more work is needed to determine if the associated GW numbers \eqref{6.GWnumbers} are symplectic invariants.}
\end{example}

\vspace{5mm}

\setcounter{equation}{0}
\section{Moduli spaces of stable maps}  
\label{section10}
\bigskip

The space of stable maps is the most commonly-used compactification of the moduli space \eqref{eq6.3} of smooth pseudo-holomorphic maps.  Indeed, it is often regarded as the central object of Gromov--Witten theory. This section uses existing  results to show that,  in certain rather special circumstances, the space of stable maps is a thin compactification over  parts of $\J$  or $\JV$.  In these cases, the space of stable maps carries  a relative fundamental class.

\smallskip

Each stable map $f:C\to X$ has an associated dual graph $\tau(f)$, whose vertices correspond to the irreducible components $C_i$ of $C$ and whose edges correspond to the nodes of $C$. Each vertex of the graph is labeled by the homology class $A_i=[f(C_i)]\in H_2(X;\Z)$, by the genus $g_i$ of $C_i$, and by the number $n_i$ of marked points on $C_i$.  Every such graph $\tau$  defines a stratum ${\cal S}_\tau$ consisting of all stable maps $f$ with $\tau(f)=\tau$.   The trivial graph, which consists of a single vertex and no edges, corresponds to the moduli space  $\M_{A,g,n}$  in \eqref{eq6.3}.
The  universal moduli space of all stable maps is  then the disjoint union
\be\label{8.1}
\ov{\M}_{A,g,n}\ =\ \ \M_{A,g,n} \ \cup\ \bigcup {\cal S}_\tau,
\ee
where the  last union is over all non-trivial graphs $\tau$ with $\sum A_i=A$, $\sum n_i=n$, and with $\sum g_i$ plus the  first Betti number of the graph equal to $g$.  The Gromov Compactness Theorem (cf. \cite{is-GCT}) implies that the projection $\ov{\pi}: \ov{\M}_{A,g,n} \to \J$ is proper.

To check whether \eqref{8.1} is a  thin compactification one must, as always, compute the index of     the restriction $\pi_\tau:{\cal S}_\tau\to\JV$  of $\ov\pi$ to each stratum ${\cal S}_\tau$,  and prove transversality results that show that ${\cal S}_\tau$ is a manifold over $\J$.  In this case, the index calculations have been done many times in the literature (for example, see Theorem~6.2.6(i) in \cite{ms2} or Section~4 in \cite{rt1} for the $g=0$ case, and Section~3 in \cite{rt2} in general). These calculations show that, for each  $\tau$, 
\be\label{8.index}
\ind \ \pi_\tau\  = \ \iota(A,g,n) - 2k
\ee
where $\iota(A,g,n)$ is the index \eqref{7.index} of the principal stratum $\pi: \M_{A,g,n}\to\J$,  and $k$ is the number of nodes of the domain.  Lemma~\ref{LemmaA1} then shows that  \eqref{8.1} is a   Fredholm-stratified  thin compactification of the principal stratum {\em provided all  strata satisfy the transversality condition {\bf Reg~1} in Section~\ref{section9}}.

\smallskip

Unfortunately, transversality can only be shown for certain classes of stable maps.  In the remainder of this section, we  examine two  such classes of maps.

\subsection{\sc Moduli spaces of somewhere injective maps}

A stable map $f:C\to X$ is called  {\em somewhere injective} (si) if each irreducible component $C_i$ of $C$ contains a non-special point $p_i$ such that 
$$
(df)_{p_i}\not= 0 \qquad\mbox{and} \qquad f^{-1}(f(p_i))=\{p_i\}.
$$
(cf. \cite[Section 2.5]{ms2}). 
 In the literature, it is usual to consider the universal  moduli space of stable maps $\ov{\M}\to \J$,  and to show that the subset $\ov{\M}^*$ consisting of  somewhere injective maps has good properties.  We will  shift perspective:  {\em  instead  of restricting to a subset of $\ov\M$, we restrict to the subset of $\J$ consisting of those  ``nice'' $J$ for which the entire  fiber $\ov{\M}^{J}$  consists of somewhere injective  maps.}
 
 \smallskip

Thus we fix $(A,g,n)$ and define  the  (possibly empty) subset  $\J_{si}\ =\ \J_{si}(A,g,n)$ of $\J$   by
$$
\J_{si}\ =\ \Big\{ J \in\J\ \Big|\  \mbox{all $(f,J)\in\ov{\M}_{A,g,n}^{J}$ are si}\Big\}.
$$
We then consider the map
\bear
\label{8.2modulispace}
\xymatrix{
\ov{\M}'_{A,g,n}(X) \ar[d]^{\ov\pi'}\\
\J_{si}
}
\eear
obtained by restricting the space of stable maps \eqref{7.stableJV} over $\J_{si}$,   with the stratification
$$
\ov{\M}'_{A,g,n}\ =\ \ \M'_{A,g,n} \ \cup\ \bigcup {\cal S}'_\tau,
$$
obtained by restricting \eqref{8.1} over $\J_{si}$.  First note that:

 \begin{lemma}
 \label{donlemma1}
$ \J_{si}(A,g,n)$ is an open subset of $\J$, so is  a Banach manifold.
 \end{lemma}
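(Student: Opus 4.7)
My plan is to show that the complement $\J \setminus \J_{si}(A,g,n)$ is closed in $\J$. Let $\Sigma \subset \ov{\M}_{A,g,n}(X)$ be the subset consisting of all stable maps that fail to be somewhere injective. Because $\ov\pi:\ov{\M}_{A,g,n}(X)\to \JV$ is proper (the Gromov Compactness Theorem), and $\J \hookrightarrow \JV$ is a smooth closed embedding via $J \mapsto (J,0)$, it suffices to prove that $\Sigma$ is closed: its image $\ov\pi(\Sigma)\cap \J$ is then exactly the complement of $\J_{si}(A,g,n)$ in $\J$, which is therefore closed.

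To prove $\Sigma$ is closed I would take a Gromov-convergent sequence $(f_n,J_n)\to(f_\infty,J_0)$ with each $f_n$ not si and check that $f_\infty$ is not si. By hypothesis each $f_n$ has an irreducible component $C_{n,i_n}$ on which no non-special point is a simple injective point. The structure theorem for $J$-holomorphic maps from a smooth irreducible curve (\cite[Prop.~2.5.1]{ms2}) forces one of two alternatives: either $f_n|_{C_{n,i_n}}$ is constant (a ghost component), or it factors as $f_n|_{C_{n,i_n}} = \phi_n \circ g_n$ with $g_n$ simple and $\deg \phi_n \geq 2$. The cover degree is bounded above by $\int_{C_{n,i_n}} f_n^*\w$ divided by the minimal energy of a $J_n$-holomorphic sphere, which itself enjoys a uniform positive lower bound for $J_n$ near $J_0$ (the standard energy gap). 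Passing to a subsequence, I may assume the alternative is the same for every $n$ and, in the cover case, that $\deg \phi_n = d\ge 2$ is constant.

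In the Gromov limit, the component $C_{n,i_n}$ degenerates to a connected subtree $T_\infty \subseteq C_\infty$ obtained by inserting possibly nontrivial bubble trees. In the constant alternative, $f_n|_{C_{n,i_n}}$ has zero energy for every $n$, so $f_\infty|_{T_\infty}$ is itself constant and $f_\infty$ has a ghost component, hence is not si. In the cover alternative, applying the Gromov Compactness Theorem also to the simple maps $g_n$ yields a stable limit $g_\infty$ supported on a subtree of $C_\infty$, and $f_\infty|_{T_\infty}$ factors through $g_\infty$ as a $d$-fold branched cover; in particular $f_\infty$ has a multiply-covered component and is not si. Either way $f_\infty \in \Sigma$, proving closedness.

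The main obstacle is the cover alternative in the last paragraph: one must check that the multiple-cover factorization passes consistently to the Gromov limit across bubbling. Concretely, one applies Gromov compactness twice (once to $\{f_n\}$ and once to $\{g_n\}$), matches the bubble trees produced by each, and then invokes the structure theorem \cite[Prop.~2.5.1]{ms2} in the limit to upgrade the pointwise degree bound to a genuine cover structure on all of $T_\infty$. This is the standard persistence-of-multiplicity bookkeeping in the Gromov compactification; everything else in the argument is formal.
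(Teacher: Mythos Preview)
Your overall strategy---showing the complement is closed by taking a Gromov-convergent sequence of non-si maps and proving the limit is non-si---matches the paper's. However, your dichotomy ``constant or multiply covered'' is incomplete, and this is a genuine gap.

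Recall the paper's definition: $f$ is si if \emph{each} component $C_i$ contains a non-special point $p_i$ with $(df)_{p_i}\ne 0$ and $f^{-1}(f(p_i))=\{p_i\}$, where the preimage is taken in \emph{all} of $C$, not just in $C_i$. So the failure of si on $C_{n,i_n}$ can occur in three ways, not two: (i) $f_n|_{C_{n,i_n}}$ is constant; (ii) $f_n|_{C_{n,i_n}}$ is a multiple cover of its image; or (iii) $f_n|_{C_{n,i_n}}$ is itself simple, but some other component $C_{n,j_n}$ has the same image, so that every point of $C_{n,i_n}$ has extra preimages in $C_{n,j_n}$. The structure theorem \cite[Prop.~2.5.1]{ms2} only tells you that a non-constant $J$-holomorphic map from a smooth irreducible curve factors through a simple map; it does \emph{not} let you conclude $\deg\phi_n\ge 2$ from the global failure of si. In case~(iii) you have $\deg\phi_n=1$ and your argument says nothing.

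The paper's proof explicitly treats this third case: it uses \cite[Cor.~2.5.3]{ms2} to write $f_n|_{C_{n,i_n}}$ as a composition through $f_n|_{C_{n,j_n}}$, then applies Gromov compactness to both and passes the relation $f(C_i)=f(C_j)$ to the limit. You need to add this case to complete the argument.
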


The proof of  Lemma~\ref{donlemma1} is given at the end of this subsection. Assuming it,    one obtains a relative fundamental class,  in any one of the homology theories \eqref{1.defH},  for the space of stable maps over $\J_{si}$: 

 \begin{prop}
 \label{GW-si-lemma}
The family \eqref{8.2modulispace} is a Fredholm-stratified  thin compactification whose index $d=\iota(A,g,n)$ is given by \eqref{7.index}.  It therefore  admits a  unique relative fundamental class  which, in  particular, assigns an element
$$
[\ov{\M}^J_{A,g,n}(X)]^{rel} \in \H_d\left(\ov{\M}^J_{A,g,n}(X)\right)
$$
to each $J\in \J_{si}(A, g, n)$.
 \end{prop}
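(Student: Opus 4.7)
The plan is to verify that \eqref{8.2modulispace} satisfies the three conditions of Definition~\ref{Def4.2}, after which Lemma~\ref{Lemma4.3} and Theorem~\ref{3.existsVFC} immediately yield the relative fundamental class. First, I would note that Lemma~\ref{donlemma1} gives $\J_{si}(A,g,n)$ as an open subset of $\J$, hence a separable metrizable Banach manifold in its own right, and that $\ov\pi':\ov\M'_{A,g,n}\to \J_{si}$ is proper because $\ov\M'$ is the $\ov\pi$-preimage of $\J_{si}$ and $\ov\pi$ is proper by the Gromov Compactness Theorem.

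Next, for condition~(a), I would invoke the standard analytic theory: the principal stratum $\pi:\M'_{A,g,n}\to \J_{si}$ is, by \textbf{Reg~1} of Section~\ref{section9} applied to somewhere-injective maps, an oriented Fredholm family of the expected index $d=\iota(A,g,n)$. For condition~(b), I would run the same transversality argument on each boundary stratum ${\cal S}'_\tau$, where $\tau$ is a stable dual graph with $k$ nodes. A point of ${\cal S}'_\tau$ is a stable $J$-holomorphic map $f:C\to X$ whose \emph{every} irreducible component $f_i$ is somewhere injective, precisely because $J\in\J_{si}(A,g,n)$. The classical argument of McDuff--Salamon \cite[Prop.~3.2.1]{ms2}, applied componentwise at the non-special injective points $p_i$ (which are distinct and interior to each $C_i$), shows that the universal linearization ${\bf D}_{f,J_\nu}$ is surjective, making ${\cal S}'_\tau$ a Banach submanifold with Fredholm projection of index $\iota(A,g,n)-2k$ by the formula \eqref{8.index}. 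Grouping by codimension, I would set ${\cal S}_{2k}=\bigcup_{|\tau|=k}{\cal S}'_\tau$ (and ${\cal S}_j=\emptyset$ for odd $j$), which has Fredholm index $d-2k$ as required.

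For condition~(c), the closedness of ${\cal T}_k=\bigcup_{i\ge k}{\cal S}_i$ follows once more from Gromov compactness: any subsequential Gromov limit of stable maps with dual graph $\tau$ has a dual graph $\tau'$ obtained from $\tau$ by further node formation and bubbling, so $\tau'$ has at least as many nodes as $\tau$, and the limit lies in the same or a deeper stratum. With all three conditions verified, Lemma~\ref{LemmaA1} in the appendix (cited in the paragraph preceding the proposition) confirms that the restriction of \eqref{8.1} over $\J_{si}$ is a Fredholm-stratified thin compactification. Lemma~\ref{Lemma4.3} then promotes this to a relatively thin family of relative dimension $d$, and Theorem~\ref{3.existsVFC} produces a unique relative fundamental class $[\ov\M^J_{A,g,n}(X)]^{rel}\in \H_d(\ov\M^J_{A,g,n}(X))$ at every $J\in\J_{si}$.

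The main obstacle, or rather the point where the hypothesis does the essential work, is condition~(b): transversality on \emph{every} boundary stratum. Over a general open set in $\J$, a Gromov limit may acquire multiply covered or ghost components on which the standard componentwise surjectivity argument fails, and this is precisely what blocks the direct application of Definition~\ref{Def4.2} for general families of stable maps. Restricting to $\J_{si}(A,g,n)$ forces every component of every stable map in every boundary stratum to be somewhere injective, so no further analytic input (e.g.\ abstract perturbations or virtual techniques) is needed and the classical transversality machinery applies uniformly across all strata.
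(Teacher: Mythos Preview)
Your proposal is correct and follows essentially the same route as the paper's proof: both reduce the problem to verifying {\bf Reg~1} on every stratum, use the somewhere-injective hypothesis to obtain surjectivity of the linearization componentwise (you cite \cite[Prop.~3.2.1]{ms2}, the paper cites \cite[Prop.~6.2.7 and Thm.~6.3.1]{ms2}, which package the same underlying argument), and then invoke the index formula \eqref{8.index} together with Lemma~\ref{LemmaA1}. The paper's proof is terser and singles out one point you leave implicit, namely that somewhere-injective maps automatically satisfy $\Aut(f)=1$, which is part of the hypothesis of {\bf Reg~1}; you should state this explicitly, since the definition of $\J_{si}$ does not mention automorphisms.
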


 By Corollary~\ref{Cor4.last}, the corresponding GW numbers  \eqref{6.GWnumbers} are constant on each path-component of  $\J_{si}(A, g, n)$. 
 \begin{proof} 
 Following the discussion in Section~\ref{section9}, it suffices to verify the assumptions of {\bf Reg~1}.   First, observe that somewhere injective maps have no non-trivial automorphisms.
  Next, standard arguments show that for each somewhere injective   $f$,  one can use the variation in the parameter 
$J\in \J$ to show that the linearization of the equation $\del_{J} f=0$ (with fixed domain and map $f$) is onto.  Specifically, for the $g=0$ case,  Proposition~6.2.7 and Theorem~6.3.1 in \cite{ms2}  imply that each stratum   ${\cal S}'_\tau$ of  \eqref{8.2modulispace} is a Banach manifold and  $\pi'_\tau:{\cal S}'_\tau\to \J_{si}$ has index given by \eqref{8.index}.   As mentioned  before \eqref{7.index}, the principal stratum is relatively oriented. Therefore \eqref{8.2modulispace} is a Fredholm-stratified thin compactification
 when $g=0$. 

The same proofs (Propositions~6.2.7 and 6.2.8 and the proof of Theorem~6.3.1) in \cite{ms2} also apply for $g>0$:  they show that the  linearization is surjective using variations that fix the complex structure on the domain, which implies, {\em a fortiori}, surjectivity   as the domain is allowed to vary.  
\end{proof}

 While Proposition~\ref{GW-si-lemma}  implies that the Gromov--Witten numbers are invariant under small deformations of $J$,  it does not imply that they are symplectic invariants  unless one can show that $\J_{si}$ is equal  to $\J$,  or at least  is path-connected, and open and  dense in $\J$.   The following examples give two simple cases where this occurs.

 \medskip

 \begin{ex} \label{example7.5}
 For $X={\Bbb CP}^N$, the universal space $\ov{\M}_{L, 0,0}(X)$ of stable rational maps representing the class of a line is smooth and equal to $\M_{L, 0,0}(X)$, and $\J_{si}(L,0,0)$ is all of $\J$.
 \end{ex}

\begin{ex}\label{example7.6}
Assume $X$ is a Calabi-Yau 3-fold.  As in \eqref{8.2modulispace}, consider the universal moduli space 
$\ov{\M}'_{A,0,0}(X) \to \J_{si}$ of unmarked stable rational curves representing a primitive homology class $A\in H_2(X;\Z)$.
 In this case, $\ov\pi$ has index~0 and, we claim, $\J_{si}=\J_{si}(A, 0,0)$ is  not only open, but is also dense and path-connected.  Hence Proposition~\ref{GW-si-lemma} gives a  relative fundamental class 
 $$
[\ov\M^J_{A,0,0}(X)]^{rel}\,\in\, \H_0\big(\ov\M^J_{A,0,0}(X) \big)
$$
defined for all $J\in \J_{si}$, and therefore for all $J\in \J$ by the Extension Lemma~\ref{extLemma}.  Evaluating on $1\in \cHH^0(\ov\M^J)$ then gives a well-defined numerical GW invariant.  

\smallskip

To prove the claim, note that, by Corollaries~1.4 and 6.6  of \cite{IP}, there is a path-connected dense subset $\J_{isol}^E$ of  $\J$ (with $E=\omega(A)$) such that, for  each $J\in \J_{isol}^E$, all somewhere injective $J$-holomorphic maps with energy at most $E$ are embeddings, and their images are  disjoint.  Fix $J\in \J_{isol}^E$. Then by Lemma~1.5(a) of \cite{IP} any $J$-holomorphic map 
$f\in \ov\M^J_{A, 0,0}(X)$ factors as a composition $f=g\circ \phi$ of a holomorphic map $\phi:C\ra C_{red}$ of (connected) complex curves and a $J$-holomorphic embedding $g:C_{red}\ra X$. But $A$ is primitive so the degree of $\phi$ is 1, and $C$ is an unmarked rational curve, so $\phi$ cannot have any constant components. Therefore $f$ is an embedding of a smooth curve; in particular, $f$ is somewhere injective.  Thus $ \J_{isol}^E\subseteq \J_{si}$.  But this means that $\J_{si}$  is an open subset of the manifold $\J$ that   contains a dense path-connected set.  It follows that $\J_{si}$ itself  is dense and path-connected, as claimed.   
\end{ex}

\medskip

 We conclude this subsection by supplying the deferred proof.

\begin{proof}[Proof of Lemma~\ref{donlemma1}] 
 From the discussion in \cite[Section 2.5]{ms2}, one  sees that the complement of $\J_{si}$ in $\J$ is the set of all $J$ such that there exists a $J$-holomorphic map $f:C\to X$  in $\ov\M_{A, g, n}(X)$ and an irreducible component $C_i$ of $C$ with either
\hspace*{1cm} \begin{enumerate}[(i)] \itemsep=2mm
\setlength\itemindent{1cm} 
\item $f(C_i)=p$ is a single point,
\item the restriction $f|_{C_i}$ is a multiple cover of its image, or
\item there is another component $C_j$ of $C$ with $f(C_i)=f(C_j)$. \\[-4mm]
\end{enumerate}
We will show that each of these is a closed condition on $J$, so the complement of $\J_{si}$ is the union of three closed sets.

Suppose that  a sequence $\{J_k\}$  converges to $J\in \J$ and that there are  stable $J_k$-holomorphic maps $f_k:C_k\to X$ and components $C'_k\subset C_k$  with $f_k(C_k')=p_k$ as in (i). By Gromov compactness,  after passing to a subsequence and then a diagonal subsequence,   $\{f_k\}$ and  $\{f_k|_{C'_k}\}$    converge to  $J_0$-holomorphic maps $f:C\to X$ and  $f':C'\to X$, respectively, for some nodal curve $C$ and subcurve $C'$ with $f'=f|_{C'}$.  But then  $f'$  is a constant map.  Thus (i) is a closed condition on $J$. 

If each $\{f_k|_{C'_k}\}$ is multiply covered then, by the proof of \cite[Proposition~2.5.1]{ms2},  there exist curves $B_k$ and   holomorphic maps $\phi_k: C_k'\ra B_k$ of degree $>1$ such that $f_k|_{C_k'}$ is the composition $g_k\circ \phi_k$ for some $J_k$-holomorphic map $g_k:B_k\to X$.  Again by Gromov compactness, we may assume that, after restricting to $C'_k$, these converge to maps $f'$, $g$ and $\phi$ with $f'=g\circ\phi$ and $\deg \phi>1$.
Then  $f'=f|_{C'}$ satisfies (ii), so  (ii) is a closed condition on $J$.

The proof for (iii) is similar after using \cite[Corollary 2.5.3]{ms2} to write $f_k|_{C_i}$ as the composition of $\phi_k:C_k^i\to C_k^j$ and $g_k:C_k^j\to X$.
\end{proof} 
\bigskip

\subsection{\sc Moduli spaces of domain-fine maps} The somewhere injective condition is  too restrictive for most applications.  In the genus~0 case, the needed
transversality results hold for the slightly larger class of  maps (``simple maps'') that are somewhere injective on the complement of   ghost  components;  see \cite[Example~6.2.5]{ms2}.   But it is more effective to expand the base space $\J$ to the space $\JV$ of Ruan-Tian perturbations  and work with the universal moduli space \eqref{7.stableJV} of $(J,\nu)$-holomorphic maps.  
 Here one has results analogous to those of the previous section for a different class of maps:

\begin{defn}
\label{Def9.1}
A  $(J,\nu)$-holomorphic map $f:C\to X$ is called {\em domain-fine} if $\Aut \; C=1$. 
\end{defn}
Note that any domain-fine map $f:C\to X$ is a stable map.  Furthermore,  the map $C\mapsto \phi(C)$   defined by \eqref{8.psi}  is  an embedding, so the graph map \eqref{6.graphmap} is  also an embedding, and hence is somewhere injective.  While the proofs in the previous  subsection do not automatically apply  (because the set of  almost complex structures on $\ov\U_{g,n}\times X$ is restricted to be 
 of the form \eqref{3.J.nu}), their conclusions hold,  as we show next.

\smallskip

Again, we fix  $(A,g,n)$,  set
$$
\JV_{df}\ =\ \JV_{df}(A,g,n)\ =\ \Big\{ J\in\JV\ \Big|\  \mbox{all $(f,J)\in\ov{\M}_{A,g,n}^{J}$ are domain-fine}\Big\},
$$
and consider the map
\bear
\label{9.2.diagram}
\xymatrix{
\ov{\M}''_{A,g,n}(X) \ar[d]^{\ov\pi''}\\
\JV_{df}
}
\eear
obtained by restricting the space of stable maps \eqref{7.stableJV} over $\J_{df}$.  Then   \eqref{8.1} restricts  to a stratification
$$
\ov{\M}''_{A,g,n}\ =\ \ \M''_{A,g,n} \ \cup\ \bigcup {\cal S}''_\tau.
$$
Corresponding to Lemma~\ref{donlemma1}, we have:

\begin{lemma}
\label{JstableOpenLemma}
 $\JV_{df}$ is an open subset of $\JV$, so is a Banach manifold.

   \end{lemma}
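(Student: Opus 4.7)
The plan is to mimic the proof of Lemma~\ref{donlemma1} and show that the complement $\JV \setminus \JV_{df}$ is closed. Suppose for contradiction that $(J_k,\nu_k) \to (J_\infty,\nu_\infty)$ in $\JV$, with $(J_\infty,\nu_\infty) \in \JV_{df}$ but $(J_k,\nu_k) \notin \JV_{df}$ for every $k$. Then for each $k$ there is a stable $(J_k,\nu_k)$-holomorphic map $f_k : C_k \to X$ in $\ov\M_{A,g,n}(X)$ with $\Aut C_k \neq 1$; choose a non-trivial $\phi_k \in \Aut C_k$. The aim is to extract a limit map $f_\infty : C_\infty \to X$ together with a non-trivial limit automorphism $\phi_\infty \in \Aut C_\infty$, contradicting $(J_\infty,\nu_\infty) \in \JV_{df}$.

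First I would apply the Gromov Compactness Theorem: since $\ov\pi : \ov\M_{A,g,n}(X) \to \JV$ is proper, after passing to a subsequence $f_k$ converges in the Gromov topology to a stable $(J_\infty,\nu_\infty)$-holomorphic map $f_\infty : C_\infty \to X$, and the underlying marked nodal domains $C_k$ converge to $C_\infty$ in the Deligne--Mumford sense, with $C_\infty$ possibly acquiring additional nodes or bubbles but never losing structure. Next I would extract a limit of the automorphisms $\phi_k$. Each $\phi_k$ is a biholomorphism of $C_k$ fixing the marked points and nodes, hence an isometry of the canonical conformal metric on the smooth locus (hyperbolic on stable components, rotational on unstable $\PP^1$ components); the family $\{\phi_k\}$ is therefore equicontinuous relative to a fixed background structure. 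Arzela--Ascoli applied componentwise, together with the convergence of hyperbolic metrics under nodal degeneration, yields a further subsequence along which $\phi_k \to \phi_\infty$, where $\phi_\infty$ is a holomorphic automorphism of $C_\infty$ preserving marked points.

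The main obstacle is verifying that $\phi_\infty \neq \mathrm{id}$: in principle the non-trivial motion of $\phi_k$ could concentrate into the vanishing cycles that form the new nodes of $C_\infty$. I would split into two cases. If infinitely many $C_k$ contain an unstable (ghost) component $D_k$, then the subcurve $D_\infty \subseteq C_\infty$ to which $D_k$ converges is obtained from $D_k$ by further bubbling or nodal degeneration and is hence still a tree of $\PP^1$'s, each carrying at most two special points of $D_\infty$ and so each still unstable; consequently $\Aut C_\infty$ is positive-dimensional and in particular non-trivial, regardless of the behavior of $\phi_k$ itself. If instead all $C_k$ are stable nodal curves, then a Hurwitz-type bound forces $|\Aut C_k|$ to be uniformly bounded, so after a further subsequence $|\Aut C_k| = N \geq 2$ is constant; then a point $p_k \in C_k$ that maximizes the hyperbolic displacement $d(p_k,\phi_k(p_k))$ satisfies a uniform positive lower bound via an area estimate on the quotient $C_k/\langle \phi_k\rangle$, and passing to the limit gives $p_\infty \in C_\infty$ with $\phi_\infty(p_\infty) \neq p_\infty$. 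In either case $\Aut C_\infty \neq 1$, completing the contradiction.
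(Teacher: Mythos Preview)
Your overall strategy---showing that the complement of $\JV_{df}$ is closed by extracting Gromov limits---is exactly the contrapositive of the paper's argument, which simply asserts that ``the order of the automorphism group is upper semi-continuous, and limits of unstable domain components are unstable'' and then uses compactness of the fibers. So you are on the right track, but there are two genuine gaps in the execution.

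In Case~1, your claim that each component of the limiting subcurve $D_\infty$ carries at most two special points is false: when an unstable $\PP^1$ bubbles into a branching tree, the branch vertex can carry three or more nodes. What is true is that the tree $D_\infty$ has at most two \emph{external} special points (those inherited from $D_k$), so the total special-point count on its $m$ components is at most $2 + 2(m-1) = 2m$; hence \emph{some} component has at most two special points and is unstable. This salvages the conclusion, but the argument as written does not.

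In Case~2, the hyperbolic displacement/area estimate is neither stated precisely nor obviously valid when the hyperbolic metrics on $C_k$ degenerate (thin necks have small injectivity radius, so a lower displacement bound is delicate). A much cleaner route: since each $C_k$ is a stable curve, it defines a point $[C_k]\in\ov\M_{g,n}$, and continuity of the stabilization map $\ov\M_{A,g,n}(X)\to\ov\M_{g,n}$ gives $[C_k]\to[C_\infty^{st}]$. The locus of curves with nontrivial automorphism is closed in $\ov\M_{g,n}$, so $\Aut\,C_\infty^{st}\neq 1$. Then either $C_\infty$ is already stable (so $C_\infty=C_\infty^{st}$ and you are done) or $C_\infty$ has an unstable component (so $\Aut\,C_\infty$ is infinite). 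This replaces the Arzel\`a--Ascoli extraction of $\phi_\infty$ entirely, and is essentially what the paper's one-line appeal to upper semi-continuity encodes.
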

\begin{proof}
 Under Gromov convergence,  the order of the automorphism group  of the domain is upper semi-continuous, and 
 limits of unstable domain components are unstable.  Thus each domain-fine map $f$ has a neighborhood with the same property. For $(J,\nu)\in \JV_{df}$, these open sets cover  the moduli space  $\ov\M^{J,\nu}_{A, g, n}(X)$,  and hence by compactness cover the moduli spaces $\pi^{-1}(U)$ for some open neighborhood $U$ of $(J,\nu)$.
\end{proof}

\medskip

Lemma~\ref{JstableOpenLemma} enables us to rephrase a result of Ruan and Tian in \cite{rt2} to show that the moduli space \eqref{9.2.diagram}  over $\JV_{df}$ admits a relative fundamental class  in  the homology theories \eqref{1.defH}.

\begin{prop}  
\label{Ruan-Tian}
Fix $(A, g, n)$ and $\JV_{df}$ as above.   Then the restriction \eqref{9.2.diagram}  of the universal moduli space of stable maps   over $\JV_{df}$ is a  Fredholm-stratified  thin compactification  $\ov{\M}_{A,g,n}(X)\to \JV_{df}$ of index $d=\iota(A, g,n)$. Therefore it admits a unique relative fundamental class
$$
[ \ov\M_{A, g, n}^{J}(X)]^{rel} \in \H_*\left( \ov\M_{A,g,n}^{J}(X)\right).
$$

\end{prop}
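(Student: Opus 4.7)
The plan is to verify that the family \eqref{9.2.diagram} satisfies the conditions of Definition~\ref{Def4.2}, so that Lemma~\ref{Lemma4.3} and Theorem~\ref{3.existsVFC} produce the relative fundamental class. Properness of $\ov\pi''$ follows by restricting the Gromov compactness result for $\ov\pi:\ov\M_{A,g,n}(X)\to\JV$ to the open subset $\JV_{df}\subset\JV$ provided by Lemma~\ref{JstableOpenLemma}; note that a Gromov limit of domain-fine maps is a stable map, but domain-fineness itself can fail in the limit, which is why we only get a proper map onto $\JV_{df}$ (rather than sequences in the fiber converging within the fiber).

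The heart of the argument is the Fredholm transversality for each stratum ${\cal S}''_\tau$ indexed by a nontrivial dual graph $\tau$. I would invoke the fact, recalled just before Definition~\ref{Def9.1}, that for a domain-fine map $f:C\to X$ the stabilization map $\phi:C\to\ov\U_{g,n}$ is an embedding, so the graph map $F=(\phi,f):C\to\ov\U_{g,n}\times X$ is somewhere injective. This is the precise hypothesis used by Ruan and Tian (\cite{rt2}, Theorem~3.2 and the surrounding transversality lemmas) to show that variations in the perturbation parameter $\nu\in\JV$ generate enough deformations to make the linearized operator ${\bf D}_{f,J_\nu}$ surjective at every such $f$. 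Consequently, for each graph $\tau$, ${\cal S}''_\tau$ is a Banach manifold and $\pi''_\tau:{\cal S}''_\tau\to\JV_{df}$ is Fredholm of index $\iota(A,g,n)-2k_\tau$, where $k_\tau$ is the number of edges (nodes) of $\tau$. Since every nontrivial graph $\tau$ has $k_\tau\ge 1$, we obtain $\ind\pi''_\tau\le d-2$ as required by condition~(b) of Definition~\ref{Def4.2}. The relative orientation on the principal stratum is carried over from \eqref{7.stableJV} by the same argument as in {\bf Reg~1}.

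Finally, I would package the strata into nested closed subsets satisfying condition~(c) of Definition~\ref{Def4.2} by invoking Lemma~\ref{LemmaA1} of the appendix, with level function $\Lambda(\tau)=k_\tau$ on the poset of dual graphs. What needs checking is that if a sequence $(f_j,J_j,\nu_j)\in{\cal S}''_{\tau}$ Gromov-converges to $(f_\infty,J_\infty,\nu_\infty)\in{\cal S}''_{\tau'}$, then $\Lambda(\tau')\ge\Lambda(\tau)$, with equality only when $\tau'=\tau$; this is the standard fact that the dual graph can only degenerate under limits, i.e.\ nodes can appear but not disappear, and bubbling introduces additional edges. Once this is verified, Lemma~\ref{LemmaA1} certifies \eqref{9.2.diagram} as a Fredholm-stratified thin compactification, and Lemma~\ref{Lemma4.3} together with Theorem~\ref{3.existsVFC} delivers the unique relative fundamental class $[\ov\M_{A,g,n}^J(X)]^{vir}\in\H_d(\ov\M_{A,g,n}^J(X))$.

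The main obstacle is the transversality claim for positive-genus strata: unlike the somewhere-injective case treated in Section~10.1, where one perturbs $J$ directly, here one is constrained to perturbations of the special form \eqref{3.J.nu} that respect the product structure on $\ov\U_{g,n}\times X$. The domain-fine hypothesis circumvents this by forcing the graph map to be an embedding on a dense open subset of each component of $C$, so that $\nu$-variations localized on $\ov\U_{g,n}\times X$ still separate enough tangent directions to achieve surjectivity---this is exactly the content of the technical core of \cite{rt2}, which I would cite rather than reprove.
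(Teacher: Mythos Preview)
Your proposal is correct and follows essentially the same route as the paper: show that domain-fine implies the graph map $F$ is an embedding (hence somewhere injective), invoke the Ruan--Tian transversality argument using variations in $\nu$ to verify {\bf Reg~1} on each stratum, and then apply Lemma~\ref{LemmaA1} together with Lemma~\ref{Lemma4.3} and Theorem~\ref{3.existsVFC}. The paper's proof is terser---it simply notes $\Aut(f)=1$, cites \cite[Prop.~3.2]{rt2} for surjectivity, and defers the rest to the proof of Proposition~\ref{GW-si-lemma}---but the logical content is the same; your explicit check of the closure condition~(c) in Lemma~\ref{LemmaA1} via the node count $k_\tau$ makes the argument more self-contained. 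One minor remark: your aside about Gromov limits of domain-fine maps possibly failing to be domain-fine is unnecessary here, since by definition of $\JV_{df}$ every stable map in a fiber over $\JV_{df}$ is automatically domain-fine, and properness follows directly from restricting the proper map $\ov\pi$ over the open set $\JV_{df}$.
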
  

 Again, by Corollary~\ref{Cor4.last}, the corresponding GW numbers  \eqref{6.GWnumbers} are  invariant under small deformations of $(J, \nu)$, and  are constant on each path-component of  $\JV_{df}(A, g, n)$.

\begin{proof} For domain-fine maps $f$, we have $\Aut(f)=1$ and the graph map $F$ is an embedding.  Hence one can use the variation in $\nu$ to show that the linearization of the equation $\del _J f =\nu$ is onto,  as in  the proof of \cite[Proposition~3.2]{rt2}.  The proof is completed  exactly as the proof  of Proposition~\ref{GW-si-lemma}.  
\end{proof}

\begin{ex}
\label{Eg.ghost} 
Let $\ov \M_{0, 0, n}(X) \to \JV$ be the moduli space of stable $(J,\nu)$-holomorphic   rational  maps representing the class $0\in H_2(X)$ and with $n\ge 3$ marked points.
Because stable rational curves have no non-trivial automorphisms, maps of this type are domain-fine for all $(J,\nu)$.  Thus in this case $\J_{df}(0,0,n)$  is all of $\JV$.
\end{ex}

\medskip

In general, $\JV_{df}$ may not be all of $\JV$, and  may even be empty.    Proposition~\ref{Ruan-Tian} is then insufficient  to define symplectic invariants. This is a manifestation of a well-known problem in symplectic Gromov--Witten theory originally identified by Ruan and Tian:  the space of stable maps  may not be a relatively  thin family    because of the presence of multiply-covered unstable domain components. On such components, the perturbation $\nu$ vanishes and cannot be used to verify condition {\bf Reg~1} of Section~\ref{section9}.   In subsequent papers, we will extend and apply the constructions in Sections~1-6 with the aim of moving past this obstacle.

 \vspace{5mm}
{\small

\medskip

}

\vspace{6mm}

\end{document}